\numberwithin{equation}{section}
\theoremstyle{plain}
\newtheorem{dummytheorem}{Dummy-Theorem}[section]
\newcommand{\proofendsign}{$\Box$} 
\newtheorem{lemma}[dummytheorem]{Lemma}
\newtheorem{theorem}[dummytheorem]{Theorem}
\newtheorem{proposition}[dummytheorem]{Proposition}
\newtheorem{corollary}[dummytheorem]{Corollary}
\newtheorem{example}[dummytheorem]{Example}
\newtheorem{remark}[dummytheorem]{Remark}
\renewenvironment{proof}{{\noindent \bf Proof }}
 {{\hspace*{\fill}\proofendsign\par\bigskip}}
\newcommand{\N}{\mathbb{N}}
\newcommand{\Q}{\mathrm{Q}}
\newcommand{\R}{\mathbb{R}}
\newcommand{\E}{\mathbb{E}}
\newcommand{\pr}{\mathrm{P}}
\newcommand{\qr}{\mathrm{Q}}
\newcommand{\ex}{\mathbb{E}}
\newcommand{\eins}{\mathsf{1}}
\newcommand{\cA}{\mathcal A}
\newcommand{\cB}{\mathcal B}
\newcommand{\cC}{\mathcal C}
\newcommand{\cF}{\mathcal F}
\newcommand{\cG}{\mathcal G}
\newcommand{\cE}{\mathcal E}
\newcommand{\cK}{\mathcal K}
\newcommand{\cL}{\mathcal L}
\newcommand{\cM}{\mathcal M}
\newcommand{\cP}{\mathcal P}
\newcommand{\cQ}{\mathcal Q}
\newcommand{\cT}{\mathcal T}
\newcommand{\cTr}{\mathcal T^{r}}
\newcommand{\cX}{\mathcal X}
\newcommand{\cZ}{\mathcal Z}
\newcommand{\hZ}{\widehat{Z}}
\newcommand{\tZ}{\widetilde{Z}}
\newcommand{\argmin}{{\rm argmin}}
\newcommand{\esssup}{\mathop{\mathrm{ess\,sup}}\displaylimits}
\newcommand{\essinf}{\mathop{\mathrm{ess\,inf}}\displaylimits}
\newcommand{\OFP}{(\Omega,{\cal F},\pr)}
\newcommand{\OFPk}{(\Omega,{\cal F}_{t_{k}},\pr_{|{\cal F}_{t_{k}}})}
\newcommand{\OFFP}{(\Omega,{\cal F}, ({\cal F}_{t})_{0\leq t\leq T},\pr)}
\newcommand{\tOFFP}{(\Omega,{\cal F}, (\widetilde{{\cal F}}_{t})_{0\leq t\leq \infty},\pr)}
\newcommand{\oOmega}{\overline{\Omega}}
\newcommand{\ocF}{\overline{{\mathcal F}}}
\newcommand{\oP}{\overline{\pr}}
\newcommand{\ocPinfty}{\overline{{\mathcal P}}^{\infty}}
\newcommand{\taur}{\tau^{r}}
\def\bcswitch{\left\{\renewcommand{\arraystretch}{1.2}\begin{array}{c@{,~}c}}
\def\ecswitch{\end{array}\right.}
\newcommand{\Timesim}{\raisebox{-2.2mm}{
\unitlength 1.00pt
\linethickness{0.5pt}
\begin{picture}(10.00,20.00)
\multiput(2.00,6.00)(0.12,0.18){50}{\line(0,1){0.18}}
\multiput(2.00,15.00)(0.12,-0.18){50}{\line(0,-1){0.18}}
\put(5.00,2.00){\makebox(0,0)[cc]{$\scriptstyle i=1$}}
\put(5.00,18.00){\makebox(0,0)[cc]{$\scriptstyle m$}}
\end{picture}
}}
\newcommand{\Timesikm}{\raisebox{-2.2mm}{
\unitlength 1.00pt
\linethickness{0.5pt}
\begin{picture}(10.00,20.00)
\multiput(2.00,6.00)(0.12,0.18){50}{\line(0,1){0.18}}
\multiput(2.00,15.00)(0.12,-0.18){50}{\line(0,-1){0.18}}
\put(5.00,2.00){\makebox(0,0)[cc]{$\scriptstyle i=k$}}
\put(5.00,18.00){\makebox(0,0)[cc]{$\scriptstyle m$}}
\end{picture}
}}
\newcommand{\Timesk}{\raisebox{-2.2mm}{
\unitlength 1.00pt
\linethickness{0.5pt}
\begin{picture}(10.00,20.00)
\multiput(2.00,6.00)(0.12,0.18){50}{\line(0,1){0.18}}
\multiput(2.00,15.00)(0.12,-0.18){50}{\line(0,-1){0.18}}
\put(5.00,2.00){\makebox(0,0)[cc]{$\scriptstyle k=1$}}
\put(5.00,18.00){\makebox(0,0)[cc]{$\scriptstyle k_{j}$}}
\end{picture}
}}
\begin{document}

\begin{frontmatter}
\title{Optimal stopping under probability distortions and law invariant coherent risk measures }
\runtitle{Optimal stopping under probability distortions}

\begin{aug}
\author{\snm{Denis Belomestny}
\ead[label=e1]{denis.belomestny@uni-due.de}}
\and
\author{\snm{Volker Kr\"atschmer}\ead[label=e2]{volker.kraetschmer@uni-due.de}}


\runauthor{D. Belomestny and V. Kr\"atschmer}

\affiliation{Duisburg-Essen University}

\address{
Duisburg-Essen University\\
Faculty of Mathematics\\
Thea-Leymann-Str. 9\\
D-45127 Essen\\
Germany\\
\printead{e1}\\
\phantom{E-mail: \ }\printead*{e2}}
\end{aug}

\begin{abstract}
In this paper we study  optimal stopping problems with respect to distorted expectations of the form
\begin{eqnarray*}
\cE(X)=\int_{-\infty}^{\infty} x\,dG(F_X(x)),
\end{eqnarray*}
where \(F_X\) is the distribution function of \(X\) and \(G\) is a convex distribution function on \([0,1].\)  As a matter of fact, except for $G$ being the identity on $[0,1],$ dynamic versions of \(\cE(X)\) do not have the so-called time-consistency property necessary for the dynamic programming approach. So the standard approaches are not applicable to optimal stopping  under \(\cE(X).\) In this paper, we prove a novel representation, which relates the solution of an optimal stopping problem under distorted expectation to the sequence of standard optimal stopping problems and hence makes the  application of the standard dynamic programming-based approaches possible. 
Furthermore, by means of the well known Kusuoka representation, we extend our results to  optimal stopping under general law invariant coherent risk measures. Finally, based on our novel representations, we develop several Monte Carlo approximation algorithms and illustrate their power for  optimal stopping under Average Value at Risk and the absolute semideviation risk measures.
\end{abstract}

\begin{keyword}[class=MSC]
\kwd[Primary ]{60G40}
\kwd{60G40}
\kwd[; secondary ]{91G80}
\end{keyword}

\begin{keyword}
\kwd{Optimized certainty equivalents}
\kwd{optimal stopping}
\kwd{primal representation}
\kwd{additive dual representation}
\kwd{randomized stopping times}
\kwd{thin sets}
\end{keyword}

\end{frontmatter}

\section{Introduction.}\label{intro} 
Consider a random variable \(X\) on some atomless probability space $\OFP$ with distribution function \(F_X.\)
Let \(G\) be a fixed distribution function defined on \([0,1],\) such mappings are also known as distortion functions. Denote by \(\cE\) the expectation of \(X\) taken with respect to the distorted distribution function \(G(F_X(x)),\) i.e., 
\begin{eqnarray*}
\cE(X)=\int_{-\infty}^{\infty} x\,dG(F_X(x)).
\end{eqnarray*}
After splitting at zero and integrating by parts we may write it  in the form
\begin{eqnarray}
\label{distorted_expectation} \nonumber
\cE^g(X)&=&  
\int_{0}^{\infty}[1 - G(F_{X}(x))]~dx-\int_{-\infty}^{0}G(F_{X}(x))~dx
\\ 
&=& \int_{0}^{\infty}g(1 - F_{X}(x))~dx-\int_{-\infty}^{0}[1-g(1-F_{X}(x))]~dx,
\end{eqnarray}
where \(g\) is related to \(G\) via \(G(x)=1-g(1-x).\)
These kind of expectations, sometimes also called distorted expectations (w.r.t. the distortion function $g$),  are of particular interest if $g$ is concave. In this case they were suggested in insurance for premium calculation (see, e.g. \cite{Denneberg1994}) and justified by some axiomatization of insurance pricing provided in \cite{wang1997axiomatic}. For concave $g$, distorted expectations were also used in finance to model bid-ask spreads, see 
\cite{cherny2009new} or \cite{madan2012structured}  for static versions and  
\cite{bielecki2012dynamic} for dynamic extensions. If $g$ is continuous and concave, then the distorted expectation has the following representation
\begin{equation}
\label{cErobust}
\cE^{g}(X) = \sup_{\qr\in \sigma-\text{core}(g(\pr))}\ex_{\qr}[X],
\end{equation}
where $\sigma-\operatorname{core}(g(\pr))$ consists of all probability measure $\qr$ on 
$\cF$ satifying $\qr(A)\leq g(\pr(A))$ for any $A\in\cF$ (see e.g. \cite[Proposition 10.3 with Example 2.1]{Denneberg1994}). In view of \eqref{cErobust}, the distorted expectations may be interpreted as expectations under  model uncertainty induced by the set \(\sigma-\text{core}(g(\pr)).\)
\par
In this paper we are also going to study more general types of nonlinear functionals related to law-invariant coherent risk measures.
Consider  the space \(L^p(\Omega,\cF, \pr),\) \(p\in [1,\infty),\) of measurable functions \(X: \Omega\to \R\) (random variables) having finite \(p\)th order moment; for \(p=\infty\)  the space \(L^\infty(\Omega,\cF, \pr)\) is formed by essentially bounded measurable functions. Let \(\cX\) be a vector space  such that \(L^\infty(\Omega,\cF, \pr)\subseteq\cX\subseteq L^1(\Omega,\cF, \pr),\) \(X\in \cX\) implies \(|X|\in\cX\) and for any \(X\in L^1(\Omega,\cF, \pr)\)  with \(|X|\preceq |Y|\) and \(|Y|\in\cX,\) it holds \(X\in \cX.\) Here the notation \(X\preceq Y\) means that \(X(\omega)\leq Y(\omega)\) for almost every \(\omega\) with respect to \(\pr.\)  A functional \(\cE: \cX\to \R\) is called a coherent risk measure if it fulfills the following axioms:
\begin{description} 
\item [(A1)] \textsf{Monotonicity}: If \(X,X'\in \cX\) and \(X\succeq X',\) then \(\cE(X)\geq \cE(X').\) 
\item [(A2)] \textsf{Sublinearity}:
\begin{eqnarray*}
\cE(\alpha X+\beta X')\leq \alpha\cE(X)+\beta\cE(X')
\end{eqnarray*}
for all \(X,X'\in \cX\) and all \(\alpha,\beta\geq 0.\)
\item [(A3)] \textsf{Translation equivariance}: If \(a\in \R\) and \(X\in \cX,\) then \(\cE(X+a)=\cE(X)+a.\)
\item [(A4)] \textsf{Cutoff property}: For all \(X\in \cX\) with property \(X\succeq 0\)
\begin{eqnarray*}
\lim_{k\to \infty} \cE\bigl((X-k)^+\bigr)=0.
\end{eqnarray*}
\end{description}
A risk measure \(\cE:\) \(\cX\to \R\) is called law invariant if  \(\cE\) depends only on the distribution of \(X;\) i.e., if \(X\) and \(X'\) have the same distribution then \(\cE(X)=\cE(X').\) The cutoff property is automatically fulfilled if ${\cal X}$ may be equipped with a complete norm $\|\cdot\|_{{\cal X}}$ such that ${\cal X}$ is a Banach lattice w.r.t. this norm and the partial order $\succeq$ (cf. Ruszczynski and Shapiro (\cite{RuszczynskiShapiro2006}), or Cheridito and Li (\cite{cheridito2009risk})). Outstanding examples are the standard $L^{p}-$spaces $L^{p}\OFP$ equipped with the ordinary $L^{p}-$norms $\|\cdot\|_{p}$ ($p\in [1,\infty]$). Another relevant class of examples is related with the continuous concave functions. More precisely, for any 
continuous concave distortion function $g$ the set ${\cal X}_{g}$ consisting of all random variables $X$ on $\OFP$ such that $\int_{0}^{\infty} g(1-F_{|X|}(x))~dx < \infty$ is a vector space. Tacitely identifying random variables that are identical $\pr-$a.s., it is a Banach lattice w.r.t. the complete norm
\begin{equation}
\label{distortionNorm}
\|\cdot\|_{{\cal X}_{g}}: {\cal X}_{g}\rightarrow\mathbb{R},\, X\mapsto \int_{0}^{\infty} g(1-F_{|X|}(x))~dx,
\end{equation} 
and $\succeq$ (cf. \cite{Denneberg1994}, Proposition 9.5 with Proposition 9.3). 
\par
 Let us consider some examples of law invariant coherent risk measures.
\begin{example} 
The Average Value at Risk risk measure at level $\alpha\in ]0,1]$ is defined as the following functional:
$$
AV@R_{\alpha}: \, X\mapsto -\frac{1}{\alpha}\,\int_{0}^{\alpha}
F^{\leftarrow}_{X}(\beta)\,d\beta,
$$
where \(X\) is \(~\pr-\) integrable and $F^{\leftarrow}_{X}$ denotes the left-continuous quantile function of the distribution function $F_{X}$ of $X.$ Note that $AV@R_{1}(X) = \ex[- X]$  for any $\pr-$integrable $X.$ Moreover, it is easy to check that \(\cE(X)=AV@R_{\alpha}(-X)\) is a coherent law invariant risk measure of the form
$$
\cE(X) = \cE^{g_{\alpha}}(X)
=\int_{-\infty}^{0}g_{\alpha}(F_{X}(x)) + 
\int_{0}^{\infty} [1 - g_{\alpha}(F_{X}(x))]~dx
$$
holds for any $\pr-$integrable $X,$ where the mapping $g_{\alpha}: [0,1]\rightarrow [0,1]$ is defined by 
$g_{\alpha}(u) = 1\wedge (u/\alpha).$ 
\end{example}
\begin{example}
The MINMAXVAR distortions were introduced in Cherny and Madan \cite{cherny2009new} and correspond to the continuous concave distortion function of the form:
\begin{eqnarray*}
g_{p}(u)=1-\Bigl(1-u^{1/(1+p)}\Bigr)^{1+p},\quad p\geq 0.
\end{eqnarray*}
For an integer \(p,\) we have the representation \(\cE^{g_p}(X)=\E[Y]\) with 
\[
Y\sim\min\{Z_1,\ldots,Z_{p+1}\}, \quad \max\{Z_1,\ldots,Z_{p+1}\}\sim X
\]
and this explains the name of the distortion.
\end{example}
\par
It was shown by Kusuoka (\cite{Kusuoka}) that any law invariant coherent risk measure on $L^{\infty}$ can be represented as the supremum of the mixtures of \( AV@R_\alpha\)  for different values of \(\alpha:\)
\begin{eqnarray}
\label{nonlinexp_rho}
\cE(X)=\sup_{\mu\in \mathcal{M}} \int_0^1 AV@R_\alpha(-X)\, d\mu(\alpha),
\end{eqnarray}
where \(\mathcal{M}\) is a set of probability measures on \([0,1].\) In fact the functional \(\cE\) in \eqref{nonlinexp_rho} has also a representation (see \cite{Kusuoka})
\begin{eqnarray}
\label{cEG}
\cE(X)=\sup_{g\in \cG} \cE^{g}(X)
\end{eqnarray}
for a set of concave distortion functions $\cG$. The representation \eqref{cEG} may be also verified for general law invariant coherent risk measures (cf. Kr\"atschmer and Z\"ahle \cite{KraetschmerZaehle2011}), and it will play a key role in the extension of our results to law invariant coherent risk measures.

\begin{example}
Consider the absolute semideviation risk measure defined as
\begin{eqnarray*}
\cE(X)\doteq\E[X]+c\,\E\bigl\{(X-\E(X))^{+}\bigr\}
\end{eqnarray*}
with some constant \(c\in [0,1]\) and \(X\in {\cal X} \doteq L^1(\Omega, \cF, \pr).\) By taking a two point probability measure \(\mu\)
with mass \(1-c\varkappa\) at \(\alpha=1\) and mass \(c\varkappa\) at \(\alpha=\varkappa,\) we obtain the following representation   (see Shapiro \cite{shapiro2013kusuoka})
\begin{eqnarray}
\label{semidev_kus}
\cE(X)=\sup_{\varkappa\in ]0,1[}\Bigr[ (1-c\varkappa) \,AV@R_1(-X)+c\varkappa\, AV@R_{\varkappa}(-X) \Bigl].
\end{eqnarray}
For $\varkappa\in ]0,1[$,  define a continuous concave distribution function $g_{\varkappa}$ on $[0,1]$ by 
\begin{eqnarray*}
g_{\varkappa}(\alpha)=
\begin{cases}
\alpha (c(1-\varkappa)+1), & \alpha\leq \varkappa,
\\
c\varkappa (1-\alpha)+\alpha, & \varkappa<\alpha\leq 1.
\end{cases}
\end{eqnarray*} 
It satisfies 
$$
\cE^{g_{\varkappa}} (X) = (1-c\varkappa) \,AV@R_1(-X)+c\varkappa\, AV@R_{\varkappa}(-X)
\quad\mbox{for}~X\in L^{1}\OFP 
$$
so that by \eqref{semidev_kus} 
the representation \eqref{cEG} reads as follows 
\begin{eqnarray}
\label{semidev_kus1}
\cE(X) = \sup_{g\in \cG}\cE^{g}(X)\quad\mbox{with}\quad\cG\doteq\{g_{\varkappa}|\, \varkappa\in ]0,1[\}.
\end{eqnarray}
\end{example}
\par
\begin{example}
\label{Expectiles}
For $\alpha\in ]0,1[,$ an $\alpha$-expectile of $X\in L^1\OFP$ can be defined as
\begin{eqnarray*}
    \cE_\alpha(X)\doteq
      \inf\left\{x\in\R\,\mid\,\alpha\ex\left[(X-x)^{+}\right] - (1-\alpha)\ex\left[(X-x)^{-}\right] = 0\right\}.
\end{eqnarray*}
If $X$ is square integrable, then the $\alpha$-expectile of $X$ has an alternative representation
\begin{eqnarray*}
    \cE_\alpha(X)\,=\,\argmin_{x\in\R}\, \alpha\,\ex\left[\left((X-x)^{+}\right)^{2}\right] + (1-\alpha)\,\ex\left[\left((X-x)^{-}\right)\right]
\end{eqnarray*}
(cf. \cite[Example 4]{BelliniKlarMuellerRosazza-Gianin2013}). This is the genuine definition of expectiles introduced in  Newey and Powell (\cite{NeweyPowell1987}). It has been shown in \cite{BelliniKlarMuellerRosazza-Gianin2013} that for $\alpha\in ]1/2,1[,$ the $\alpha-$expectile defines a law-invariant coherent risk measure $\cE_{\alpha}$ on ${\cal X} \doteq L^{1}\OFP$ with the representation
\begin{eqnarray}
\label{Expectile_Kusuoka}
\cE_\alpha(X) = \sup_{\gamma\in [(1-\alpha)/\alpha,1]}\Bigr[ (1-\gamma) \,AV@R_{\alpha(\gamma)}(-X)+\gamma\, AV@R_{1}(-X) \Bigl],
\end{eqnarray}
where $\alpha(\gamma)\doteq (1-\alpha)(1-\gamma)/[\gamma (2\alpha - 1))]$ (cf. 
\cite[Proposition 8, Proposition 9]{BelliniKlarMuellerRosazza-Gianin2013}). Then 
\begin{eqnarray*}
g_{\alpha,\gamma}(\beta)=
\begin{cases}
\frac{\alpha \beta \gamma}{1 - \alpha}, & \beta\leq \alpha(\gamma),
\\
1 - \gamma +  \gamma \beta, & \alpha(\gamma) < \beta\leq 1
\end{cases}
\end{eqnarray*}
defines a continuous and concave distortion function for $\gamma\in [(1-\alpha)/\alpha,1]$ satisfying
$$
\cE^{g_{\alpha,\gamma}}(X) =   (1-\gamma) \,AV@R_{\alpha(\gamma)}(-X)+\gamma\, AV@R_{1}(-X)\quad\mbox{for}~X\in L^{1}\OFP. 
$$
Hence in view of \eqref{Expectile_Kusuoka}, we obtain for $\alpha\in ]1/2,1[$
\begin{eqnarray}
\label{Expectile_Kusuoka1}
\cE_{\alpha}(X) = \sup_{g\in \cG_{\alpha}}\cE^{g}(X)\quad\mbox{with}\quad\cG_{\alpha}\doteq\{g_{\alpha,\gamma}|\, \gamma\in [(1-\alpha)/\alpha,1]\}.
\end{eqnarray}

\end{example}
\par
Let \(0<T<\infty\) and let $\OFFP$  be a filtered probability space, where 
$(\cF_{t})_{t\in [0,T]}$ is a right-continuous filtration with $\cF_{0}$ containing only the sets with probability $0$ or $1$ as well as all the null sets of $\cF$. While the distorted expectations are well established in static settings, this is much less the case in dynamic
setting related to the filtration $(\cF_{t})_{t\in [0,T]}.$ The reason is that, contrary to what is the case for the standard expectations, the collection of the ``conditional
distorted expectations'' 
\begin{equation}
\label{conditionalChoquetExpectations}
\cE^g_s(X)\doteq\int_{0}^{\infty}g(1 - F_{X|\cF_s}(x))~dx-\int_{-\infty}^{0}[1-g(1-F_{X|\cF_s}(x))]~dx
\end{equation}
corresponding to the collection of ``updated'' probability measures \(F_{X|\cF_s}\) is typically time-inconsistent. For instance, it is possible
that for two  times \(s\) and \(t\) with \(s < t,\)  we have   \(\cE_t^g(X)\geq \cE_t^g(Y),\) while nevertheless at time \(s\) the conditional distorted expectation of \(Y\) is
greater than that of \(X\). Even worse, unless $g$ being the identity map on $[0,1],$ we do not find any dynamic extension $(\cE_{t})_{t\in [0,T]}$ of $\cE^{g}$ satifying 
$\cE_{s}(X)\geq \cE_{s}(Y),$ whenever $s < t$ and  \(\cE_t(X)\geq \cE_t(Y)\)  (cf. 
\cite{kupper2009representation}). 
\par
Consider now a right-continuous nonnegative adapted stochastic process $(Y_{t})$ with bounded paths, and let $\cT$ gather all finite stopping times $\tau\leq T$ w.r.t. $(\mathcal{F}_t).$ The main object of our study is the following optimal stopping problem
\begin{equation}
\label{stoppproblem}
\sup_{\tau\in\cT}\, \cE^g(Y_{\tau}) = \sup_{\tau\in\cT}\, \int_{0}^{\infty} g(1 - F_{Y_{\tau}}(x))~dx,
\end{equation}
where $F_{Y_{\tau}}$ stands for the distribution function of $Y_{\tau}.$ 
As mentioned above, the key challenge related to the problem \eqref{stoppproblem}  is that dynamic distortions  \(\cE^g_t,\) \(t\in[0,T],\) as defined in \eqref{conditionalChoquetExpectations} do not possess  the property of dynamic time consistency:
\begin{eqnarray*}
\cE^g_s\circ \cE^g_t=\cE^g_s,\quad 0\leq s < t\leq T,
\end{eqnarray*}
except the trivial case \(g(x)\equiv x\). 
Thus, the methods based on the dynamic programming principle can not be applied to solve \eqref{stoppproblem}.

The stopping problem \eqref{stoppproblem} was recently considered by Xu and Zhou \cite{xu2013optimal} under some additional assumptions. First of all, the authors allow for all finite stopping times w.r.t. to some filtered probability space $(\Omega,\cF,(\cF_{t})_{t\geq 0},\pr),$ that is, they consider infinite horizon optimal stopping problems. Secondly, they impose a special structure on the process $(Y_{t})_{t\geq 0},$ namely  it is supposed that $Y_{t} = u(S_{t})$ for  an absolutely continuous nonnegative function  $u$ on $[0,\infty[$ and for  a one-dimensional geometric Brownian motion $(S_{t})_{t\geq 0}$.  Thirdly, the authors focus on strictly increasing absolutely continuous distortion functions $g,$ so that their analysis does not cover the case of Average Value at Risk. Summing up, 
in \cite{xu2013optimal} the optimal stopping problems of the form 
\begin{eqnarray}
\label{os_distortion}
\sup_{\tau \in \mathcal{T}^{\infty}} \, \cE^{g}(u(S_\tau))
\end{eqnarray}
are studied, where $\mathcal{T}^{\infty}$ denotes the set of all finite stopping times. A crucial step  in the authors' argumentation is the reformulation of the optimal stopping problem \eqref{os_distortion} as 
\begin{eqnarray*}
\sup_{\tau \in \mathcal{T}^{\infty}} \, \cE^{g}(u(S_\tau)) 
&=& 
\sup_{F\in {\cal D}} \int_{0}^{\infty}g(1 - F(x))u'(x)\,dx \\
&=& 
\sup_{F\in {\cal D}} \int_{0}^{1} u(F^{\leftarrow}(u)) g'(1 - u)\, du,
\end{eqnarray*}
where $u'$ and $g'$ are derivatives of $u$ and $g,$ respectively, and ${\cal D}$ denotes the set of all distribution functions $F$ with a nonnegative support such that 
$\int_{0}^{\infty}(1 - F(x))\,dx\leq S_{0}.$ The main idea of the approach in \cite{xu2013optimal} is that any such distribution function may be described as the distribution function of $S_{\tau}$ for some finite stopping time $\tau\in\cT^{\infty}$ and this makes the application of the Skorokhod embedding technique possible. Hence, the results essentially rely on the special structure of the stochastic process $(Y_{t})_{t\geq 0}$ and   seem to be not extendable to stochastic processes of the form $Y_{t} = U(X_{t}),$ where $(X_{t})_{t\geq 0}$ is a multivariate Markov process. Moreover,  it remains unclear whether the analysis of  \cite{xu2013optimal} can be carried over to the case of bounded stopping times, as  the Skorokhod embedding can not be applied to  the general sets of stopping times $\cT$ (see, e.g. \cite{ankirchner2011skorokhod}). 
\par
In this paper we continue the line of research initiated in \cite{xu2013optimal} and derive several novel representations for the value of optimal stopping  under probability distortions. Unlike  \cite{xu2013optimal}, we do not restrict our analysis to some specific type of driving processes, but consider finite horizon optimal stopping problems for general stochastic  processes. This has a consequence that our results are not as explicit as ones in  \cite{xu2013optimal}. However, our representations can be used to develop efficient numerical algorithms for approximating the value of \eqref{stoppproblem}. The analysis of this paper can be also viewed as an extension of the results of Belomestny and Kr\"atschmer \cite{BelomestnyKraetschmer2014}, where optimal stopping problems for optimized certainty equivalents
$$
\cE^{\Phi}_t(X)\doteq\sup_{\Q\in\cQ_{t}}\left(\ex_{\Q}[X|\mathcal{F}_t]- \ex\left[\Phi\left(\frac{d\Q}{d\pr}\right)\big|\cF_{t}\right]\right),
$$
were considered. Here  $\Phi: [0,\infty[\rightarrow [0,\infty]$ denotes a lower semicontinuous convex mapping,
and  $\cQ_{t}$ is the set of all probability measures $\Q,$ which are absolutely continuous w.r.t. a given measure $\pr$   and $\Q= \pr$ on \(\cF_{t}.\) Let us note that the intersection of the class of optimized certainty equivalents with the class of probability distortions is very small and essentially coincides with the Average Value at Risk.
\par
The paper is organized as follows. In Section~\ref{primal} we show a general primal representation result for optimal stopping problems under probability distortions. Next a generalisation to the case of law invariant coherent risk measures is presented.
Section~\ref{add_dual} is devoted to the additive dual representation for optimal stopping problems under coherent risk measures. A problem of pricing Bermudan maxcall options under absolute semideviation risk measure is numerically analysed in Section~\ref{num}. Finally all proofs are collected in Section~\ref{proofs}. 
\section{Main results}
\label{main_results}
Define a set ${\cal X}_{g}$ to consist of all random variables $X$ on $\OFP$ such that 
$\int_{0}^{\infty}g(1 - F_{|X|}(x))~dx < \infty.$ For the distortion function $g$ we shall assume that
\begin{equation}
\label{Annahmen Young function}
g~\mbox{is continuous and concave}.
\end{equation} 
By concavity we have $g(u)\geq u$ for $u\in [0,1]$ so that every $X\in {\cal X}_{g}$ is also $\pr-$integrable. Moreover, under \eqref{Annahmen Young function}, there exists some unique probability measure $\mu_{g}$ on the ordinary Borel $\sigma-$algebra $\cB(]0,1])$  characterized by $g'(x) = \int_x^1 1/u~\mu_{g}(du)$ for $u\in ]0,1[,$ where 
$g'$ denotes the right-sided derivative of $g|_{]0,1[}$ (cf. \cite[Lemma 4.69]{FoellmerSchied2011}). 
The space of all $\mu_{g}-$integrable random variables (modulo the $\mu_{g}-$a.s. equivalence) will be denoted by $L^{1}(\mu_{g}),$ whereas $L^{1}_{+}(\mu_{g})$ gathers all nonnegative members of $L^{1}(\mu_{g}).$ With any fixed members $Z^{o}$ of 
$L^{1}_{+}(\mu_{g})$ we associate a set $L^{1}_{+}(\mu_{g},Z^{o})$ of all $Z\in L^{1}_{+}(\mu_{g})$ such that $Z(1) = 0,$ and $\inf_{\alpha\in ]0,a]}(Z(\alpha) - Z^{o}(\alpha))\geq 0$ holds for some $a\in ]0,1[.$
Set for any $\tau\in\cT~\mbox{and}~Z\in L^{1}_{+}(\mu_{g}),$
\begin{equation}
\label{Uprocess}
U_{\tau}^{g,Z} \doteq  \int_0^1\left[\frac{(Y_{\tau} - Z(\alpha))^{+}}{\alpha} + Z(\alpha)\right]~\mu_{g}(d\alpha).
\end{equation}
In addition, we define $Y^{*} \doteq \sup_{t\in [0,T]}Y_{t},$  and
$$
Z^{*}: ]0,1]\rightarrow\R,~
Z^*(\alpha)=
\bcswitch
F_{Y^{*}}^{\leftarrow}(1-\alpha)&\alpha\in ]0,1[,\\
0&\alpha = 1,
\ecswitch
$$ 
where $F_{Y^{*}}^{\leftarrow}$ stands for  a left-continuous quantile function of the distribution function $F_{Y^{*}}$ of $Y^{*}.$  Finally, denote
$$
L^{1}[Y^{*},\mu_{g}] \doteq \left\{Z\in L^{1}_{+}(\mu_{g})\mid Z(1) = 0, 
\ex\left[\int_0^1 \frac{(Y^{*}- Z)^{+}}{\alpha}~\mu_{g}(d\alpha)\right] < \infty\right\}.
$$ 
\begin{remark}
\label{constructionmeasure}
The construction of the measure \(\mu_g\) from a given continuous concave distortion function \(g\) can be described as follows (cf. \cite[proof of Lemma 4.69]{FoellmerSchied2011}). First, we define a measure \(\nu_g\) on the Borel \(\sigma\)-algebra \(\cB(]0,1])\) via
\(\nu_g(]u,1])=g'(u)\) for \(u\in ]0,1[.\) Next set
\begin{eqnarray*}
\mu_g(A)=\int_A u\,\nu_g(du),\quad A\in \cB(]0,1]). 
\end{eqnarray*}
It follows from the above definition that for any set \(A=]0,z]\) with \(z\in ]0,1[,\)
\begin{eqnarray*}
\mu_g(A)=\int_{0}^z \nu_g (]s,z])\, ds=g(z)-zg'(z),
\end{eqnarray*}
and also 
$$
\mu_{g}(\{1\}) = \nu_{g}(\{1\}) = \lim_{z\to 1-}g'(z).
$$

\end{remark}

\subsection{Primal representation}
\label{primal} 
We shall assume $Y^{*}\in \cX_{g}.$ As a result
$$
\sup_{\tau\in\cT}\cE^{g}(Y_{\tau})\leq \int_{0}^{\infty}g(1 - F_{Y^{*}}(x))~dx < \infty.
$$ 
It follows from Lemma \ref{optimizedcertaintyequivalent} (cf. Appendix \ref{AppendixAA})
$$
\int_{0}^{\infty}g(1 - F_{Y^{*}}(x))~dx 
= 
\cE^{g}(Y^{*})
= 
\ex\left[\int \left(\frac{(Y^{*}- Z^{*})^{+}}{\alpha} + Z^{*}\right)\,d\mu_g\right] < \infty.
$$
In particular
\begin{equation}
\label{wichtige Integrierbarkeit}
\ex\left[\int \frac{(Y^{*}- Z^{*})^{+}}{\alpha}~d\mu_{g}\right] < \infty\quad\mbox{and}\quad Z^{*}\in L^{1}_{+}(\mu_{g})~\mbox{with}~Z^{*}(1) = 0.
\end{equation}
Property \eqref{wichtige Integrierbarkeit} shows that under $Y^{*}\in\cX_{g},$ the set 
$L^{1}[Y^{*},\mu_{g}]$ is not empty.  
One crucial observation for what follows is that if $Z^{o}\in L^{1}[Y^{*},\mu_{g}] ,$ then 
$L^{1}_{+}(\mu_{g},Z^{o})\subseteq L^{1}[Y^{*},\mu_{g}].$ 
\medskip

The following theorem is our main result.
\begin{theorem}
\label{new_representation}
Let $(\Omega,\cF_{t},\left.\pr\right |_{\cF_{t}})$ be atomless with countably generated $\cF_{t}$ for every $t > 0,$ and let $\cZ$ be a dense subset of $\{Z\in L^{1}_{+}(\mu_{g})\mid Z(1) = 0\}$ w.r.t. the $L^{1}-$norm.  If  (\ref{Annahmen Young function}) is fulfilled and $Y^{*}\in {\cal X}_{g},$   then it holds for any \(Z^{o}\in L^{1}[Y^{*},\mu_{g}],\) 
\begin{eqnarray}
\label{main_eq_1}
\sup_{\tau\in\cT} \cE^{g}(Y_{\tau}) 
&=&
\sup\limits_{\tau\in\cT}\inf_{Z\in L^{1}_{+}(\mu_{g})\atop Z(1) = 0} 
\ex\left[U^{g,Z}_{\tau}\right]
\\
\label{main_eq_2}
&=& 
\inf_{Z\in L^{1}_{+}(\mu_{g})\atop Z(1) = 0}\sup\limits_{\tau\in\cT} 
\ex\left[U^{g,Z}_{\tau}\right]\\
\label{main_eq_3}
&=&
\inf_{Z\in \cL^{*}_{\cZ}(\mu_{g},Z^{o})}\sup\limits_{\tau\in\cT} \ex\left[U^{g,Z}_{\tau}\right], 
\end{eqnarray}
where $\cL^{*}_{\cZ}(\mu_{g},Z^{o}) \doteq\{\eins_{]0,a]}\cdot Z^{o} + Z\mid Z\in \cZ, a\in ]0,1[\}.$ In particular, we can always take \(Z^{o}=Z^{*}\) in the above representation.
\end{theorem}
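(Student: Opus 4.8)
The plan is to base everything on the static identity of Lemma~\ref{optimizedcertaintyequivalent}, which I read as asserting that for any $\pr$-integrable $X$ (in particular $X=Y_{\tau}$),
\[
\cE^{g}(X)=\inf_{Z\in L^{1}_{+}(\mu_{g})\atop Z(1)=0}\ex\left[\int_{0}^{1}\left(\frac{(X-Z(\alpha))^{+}}{\alpha}+Z(\alpha)\right)\mu_{g}(d\alpha)\right].
\]
Applying this with $X=Y_{\tau}$ and taking $\sup_{\tau\in\cT}$ gives \eqref{main_eq_1} at once, since $\inf_{Z}\ex[U^{g,Z}_{\tau}]=\cE^{g}(Y_{\tau})$; the same identity identifies the inner quantities appearing in \eqref{main_eq_2} and \eqref{main_eq_3}. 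Weak duality $\sup_{\tau}\inf_{Z}\ex[U^{g,Z}_{\tau}]\le\inf_{Z}\sup_{\tau}\ex[U^{g,Z}_{\tau}]$ is automatic, so the entire content of \eqref{main_eq_2} is the reverse inequality, i.e.\ the absence of a duality gap.

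To produce the missing inequality I would convexify the stopping rules. Let $\cTr$ denote the randomized stopping times and set $\Psi(\rho,Z)\doteq\ex[U^{g,Z}_{\rho}]$. For fixed $Z$ the map $\rho\mapsto\Psi(\rho,Z)$ is affine, and for fixed $\rho$ the map $Z\mapsto\Psi(\rho,Z)$ is convex; moreover $\cTr$ is convex and, in a suitable weak topology, compact, while $\{Z\in L^{1}_{+}(\mu_{g}):Z(1)=0\}$ is convex. Sion's minimax theorem then yields $\sup_{\rho\in\cTr}\inf_{Z}\Psi=\inf_{Z}\sup_{\rho\in\cTr}\Psi$, and it remains to replace $\cTr$ by $\cT$ on both sides. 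On the inner side this is routine: for fixed $Z\in L^{1}[Y^{*},\mu_{g}]$ the functional $\ex[U^{g,Z}_{\cdot}]$ is the value of an ordinary linear optimal stopping problem for the adapted reward $\int_{0}^{1}(\frac{(Y_{t}-Z(\alpha))^{+}}{\alpha}+Z(\alpha))\,\mu_{g}(d\alpha)$, and randomization never increases such a value, so $\sup_{\rho\in\cTr}\Psi(\rho,Z)=\sup_{\tau\in\cT}\Psi(\tau,Z)$. On the outer side I would invoke the robust representation \eqref{cErobust}: since $\inf_{Z}\Psi(\rho,Z)=\cE^{g}(Y_{\rho})=\sup_{\qr\in\sigma\text{-core}(g(\pr))}\ex_{\qr}[Y_{\rho}]$, interchanging the two suprema and again using that randomization does not help a fixed linear stopping problem gives $\sup_{\rho\in\cTr}\cE^{g}(Y_{\rho})=\sup_{\qr}\sup_{\tau\in\cT}\ex_{\qr}[Y_{\tau}]=\sup_{\tau\in\cT}\cE^{g}(Y_{\tau})$. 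Chaining these identities around the minimax equality closes the gap and proves \eqref{main_eq_2}.

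For \eqref{main_eq_3} note first that $\cL^{*}_{\cZ}(\mu_{g},Z^{o})\subseteq\{Z\in L^{1}_{+}(\mu_{g}):Z(1)=0\}$, so the restricted infimum dominates the full one; the work is the reverse inequality. Given an arbitrary competitor $Z'$ with finite value, I would approximate it in $L^{1}(\mu_{g})$ by $Z_{n}\in\cZ$ (density) and set $W_{n}\doteq\eins_{]0,a_{n}]}\cdot Z^{o}+Z_{n}\in\cL^{*}_{\cZ}(\mu_{g},Z^{o})$ with $a_{n}\downarrow 0$. The summand $\eins_{]0,a_{n}]}Z^{o}$ guarantees, through the inclusion $L^{1}_{+}(\mu_{g},Z^{o})\subseteq L^{1}[Y^{*},\mu_{g}]$ recorded before the theorem, that each $W_{n}$ keeps $\sup_{\tau}\ex[U^{g,W_{n}}_{\tau}]$ finite, while $a_{n}\downarrow 0$ renders this correction negligible in $L^{1}(\mu_{g})$. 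One then checks that $\sup_{\tau}\ex[U^{g,\cdot}_{\tau}]$ behaves continuously along $W_{n}\to Z'$: away from the origin the weight $1/\alpha$ is bounded, so $L^{1}(\mu_{g})$-convergence suffices, and the contribution near the origin is controlled by the integrable majorant $\int_{0}^{1}\left(\frac{(Y^{*}-Z^{o}(\alpha))^{+}}{\alpha}+Z^{o}(\alpha)\right)\mu_{g}(d\alpha)$ furnished by $Z^{o}\in L^{1}[Y^{*},\mu_{g}]$. Letting $n\to\infty$ gives $\inf_{\cL^{*}_{\cZ}}\sup_{\tau}\le\sup_{\tau}\ex[U^{g,Z'}_{\tau}]$, and taking the infimum over $Z'$ yields \eqref{main_eq_3}; the choice $Z^{o}=Z^{*}$ is admissible by \eqref{wichtige Integrierbarkeit}.

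The main obstacle is the minimax step. Two points need genuine care. First, one must equip $\cTr$ with a topology (of Baxter--Chac\'on type) in which it is compact and in which $\rho\mapsto\ex[U^{g,Z}_{\rho}]$ is upper semicontinuous despite the unbounded weight $1/\alpha$; here the domination of $U^{g,Z}_{\rho}$ by its $Y^{*}$-counterpart, finite precisely when $Z\in L^{1}[Y^{*},\mu_{g}]$, is what rescues semicontinuity. Second, the two ``randomization does not help'' reductions are exactly what let the minimax value, which is intrinsically a statement about the convexified problem, be transported back to genuine stopping times. Everything else is bookkeeping: the measurable selection that pushes the infimum over $Z(\alpha)$ inside the $\mu_{g}$-integral in the static identity, and a Fatou-type argument for lower semicontinuity of $Z\mapsto\Psi(\rho,Z)$.
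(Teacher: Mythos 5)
Your handling of \eqref{main_eq_1} and \eqref{main_eq_3} matches the paper (the $\eins_{]0,a]}Z^{o}$-correction plus density of $\cZ$ and dominated convergence is exactly the paper's argument), but your derandomization step for \eqref{main_eq_2} contains a genuine gap. You write $\sup_{\taur\in\cTr}\cE^{g}(Y^{r}_{\taur})=\sup_{\taur}\sup_{\qr}\ex_{\qr}[Y^{r}_{\taur}]$, interchange the suprema, and then invoke ``randomization does not help a fixed linear stopping problem'' to pass from $\cTr$ to $\cT$. However, the robust representation \eqref{cErobust} applied to $Y^{r}_{\taur}$, which is a random variable on the enlarged space $\Omega\times[0,1]$, ranges over measures $\overline{\qr}$ in the $\sigma-$core of $g(\pr\otimes\pr^{U})$, and these need \emph{not} be product measures. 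Under a non-product $\overline{\qr}$ the coordinate $u$ is correlated with $\cF$-events, and since $u$ is fully revealed at time $0$ in the enlarged filtration $\big(\cF_{t}\otimes\cB([0,1])\big)_{t}$, a randomized stopping time can exploit $u$ as a predictor of the future of $Y$; hence for fixed $\overline{\qr}$ one only has $\sup_{\taur\in\cTr}\ex_{\overline{\qr}}[Y^{r}_{\taur}]\geq\sup_{\tau\in\cT}\ex_{\overline{\qr}}[Y_{\tau}]$, possibly strictly, which is the wrong direction for your chain. The classical ``randomization does not help'' fact is a statement about the product measure $\pr\otimes\pr^{U}$ (independence of $u$ from the filtration), not about arbitrary elements of the enlarged $\sigma-$core. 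A telltale symptom is that your proof never uses the hypotheses that $(\Omega,\cF_{t},\pr|_{\cF_{t}})$ is atomless with countably generated $\cF_{t}$: these are consumed precisely by the paper's derandomization (Proposition \ref{derandomize2}), which proceeds by discretizing the randomized stopping time (Lemma \ref{discretize}), approximating its kernel weights $Z_{k}=K_{\taur}(\cdot,]t_{(k-1)j},t_{kj}])$ by genuine partitions $(B_{1n},\dots,B_{k_{j}n})$ of $\Omega$ via the weak-star density of indicator partitions in measurable partitions of unity (Proposition \ref{Dichtheit} with Lemma \ref{BanachAlaoglu} and Proposition \ref{angelic}), and then passing to the limit in law (Lemmas \ref{missinglink} and \ref{approximationstopping}, Corollary \ref{discretizedstop}). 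Without an argument of this type, the identity $\sup_{\taur\in\cTr}\cE^{g}(Y^{r}_{\taur})=\sup_{\tau\in\cT}\cE^{g}(Y_{\tau})$ is unproven and \eqref{main_eq_2} does not follow.

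A secondary, repairable issue lies in your minimax step itself. You put the compactness on the stopping-time side (Baxter--Chac\'on topology on $\cTr$) and need upper semicontinuity of $\taur\mapsto\ex[U^{g,Z}_{\taur}]$; in the paper such compactness/continuity arguments (proof of Proposition \ref{solution}, via Theorem 4.7 of \cite{edgar1982compactness}) require quasi-left-continuity of $(Y_{t})_{t\in[0,T]}$, an assumption Theorem \ref{new_representation} does not make. The paper's Proposition \ref{minimax} avoids this entirely by applying K\"onig's minimax theorem with the compactness on the $Z$-side: the set $\cK_{Y^{*}}$ of competitors dominated by $\eins_{]0,1[}F^{\leftarrow}_{Y^{*}}(1-\cdot)$ is uniformly $\mu_{g}$-integrable, hence weakly compact by Dunford--Pettis, $h(\taur,\cdot)$ is convex and lower semicontinuous, and the mixture property \eqref{dritteBedingungMinimax} (for any $\taur_{1},\taur_{2}$ and $\lambda$ there is $\taur$ with $h(\taur,\cdot)=\lambda h(\taur_{1},\cdot)+(1-\lambda)h(\taur_{2},\cdot)$) substitutes for convexity and compactness of $\cTr$. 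If you rework your argument, adopting this orientation of the minimax theorem removes the need for any topology on $\cTr$; but the derandomization gap above is the essential missing idea and cannot be bypassed by the robust representation.
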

The proof of Theorem \ref{new_representation} may be found in Subsection \ref{beweis new dual representation}. 
Theorem~\ref{new_representation} deals with one fixed distortion functional \(\cE^{g}.\) Due to the representation  \eqref{cEG},
we can extend the results of Theorem~\ref{new_representation} to the case of the general law invariant coherent risk measures. 
\begin{corollary}
Let \(\cE\) be any law invariant coherent risk measure, and let \(\cZ\) be a set of \(\cB(]0,1])\)-measurable mappings such that for any \(g\in \cG\) in representation \eqref{cEG}, the set  \(\cZ\) is dense in $\{Z\in L^{1}_{+}(\mu_{g})\mid Z(1) = 0\}$ w.r.t. the $L^{1}-$norm defined by $\mu_{g}$. Moreover, let \(Y^*\in \cX\) and \(Z^{o}\in \cap_{g\in\cG} L^{1}[Y^{*},\mu_{g}],\) then 
\begin{eqnarray}
\label{main_eq_coh}
\sup_{\tau\in\cT} \cE(Y_{\tau})= \sup_{g\in\cG}\inf_{Z\in \cL^{*}_{\cZ}(Z^{o})}\sup\limits_{\tau\in\cT} \ex\left[U^{g,Z}_{\tau}\right]
\end{eqnarray}
where $\cL^{*}_{\cZ}(Z^{o}) \doteq\{\eins_{]0,a]}\cdot Z^{o} + Z\mid Z\in \cZ, a\in ]0,1[\}.$
\end{corollary}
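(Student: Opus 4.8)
The plan is to deduce the corollary from Theorem~\ref{new_representation} by applying the latter separately to each distortion function $g\in\cG$ appearing in the Kusuoka-type representation \eqref{cEG}, and then interchanging the two suprema.

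First I would settle the domain issues. Since $(Y_t)$ is nonnegative with bounded paths, $0\preceq Y_\tau\preceq Y^*$ for every $\tau\in\cT$, so the solidity of $\cX$ together with $Y^*\in\cX$ gives $Y_\tau\in\cX$ and in particular makes \eqref{cEG} applicable to each $Y_\tau$. For each $g\in\cG$, nonnegativity of $Y^*$ yields $\cE^g(Y^*)=\int_0^\infty g(1-F_{Y^*}(x))\,dx=\|Y^*\|_{\cX_g}$, and by \eqref{cEG} this is bounded by $\cE(Y^*)<\infty$; hence $Y^*\in\cX_g$ for every $g\in\cG$, which is exactly the finiteness hypothesis demanded by Theorem~\ref{new_representation}.

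Next I would apply \eqref{cEG} pointwise in $\tau$ and commute the suprema:
$$
\sup_{\tau\in\cT}\cE(Y_\tau)=\sup_{\tau\in\cT}\sup_{g\in\cG}\cE^g(Y_\tau)=\sup_{g\in\cG}\sup_{\tau\in\cT}\cE^g(Y_\tau).
$$
The interchange is immediate, a supremum over a product index set being independent of the order of maximization. Then, for each fixed $g\in\cG$, I would invoke the identity \eqref{main_eq_3} of Theorem~\ref{new_representation}. Its hypotheses are met term by term: $(\Omega,\cF_t,\pr|_{\cF_t})$ is atomless with countably generated $\cF_t$ (a property of the filtration, not of $g$); $g$ is continuous and concave, being a distortion function in \eqref{cEG}; $\cZ$ is dense in $\{Z\in L^1_+(\mu_g)\mid Z(1)=0\}$ w.r.t.\ the $L^1$-norm defined by $\mu_g$ by assumption; $Y^*\in\cX_g$ by the previous step; and $Z^o\in L^1[Y^*,\mu_g]$ since $Z^o\in\cap_{g\in\cG}L^1[Y^*,\mu_g]$. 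Thus
$$
\sup_{\tau\in\cT}\cE^g(Y_\tau)=\inf_{Z\in\cL^*_\cZ(\mu_g,Z^o)}\sup_{\tau\in\cT}\ex\left[U^{g,Z}_\tau\right].
$$

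The key observation closing the argument is that the inner index set $\cL^*_\cZ(\mu_g,Z^o)=\{\eins_{]0,a]}\cdot Z^o+Z\mid Z\in\cZ,\,a\in]0,1[\}$ does not depend on $g$: it is built solely from the common set $\cZ$, the common element $Z^o$, and the parameter $a$, so it equals $\cL^*_\cZ(Z^o)$. Substituting the last display into the commuted suprema then yields \eqref{main_eq_coh}. The only point requiring genuine care — and which the corollary builds into its hypotheses — is that a single dense set $\cZ$ and a single element $Z^o$ must simultaneously fulfil the $g$-dependent requirements of Theorem~\ref{new_representation} for all $g\in\cG$; once this is granted, the index set is $g$-free and no interchange beyond the trivial commutation of the two suprema is needed.
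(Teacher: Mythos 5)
Your proposal is correct and follows exactly the route the paper intends for this corollary: apply the Kusuoka-type representation \eqref{cEG} to each $Y_\tau$, commute the two suprema, and invoke identity \eqref{main_eq_3} of Theorem~\ref{new_representation} for each fixed $g\in\cG$, noting that $\cL^{*}_{\cZ}(\mu_{g},Z^{o})$ is in fact $g$-free. Your explicit verification that $Y^{*}\in\cX_{g}$ for every $g\in\cG$ (via $\cE^{g}(Y^{*})\leq\cE(Y^{*})<\infty$) is a detail the paper leaves implicit, but it is precisely the right way to discharge the finiteness hypothesis of the theorem.
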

\paragraph{Discussion}
The first equality \eqref{main_eq_1} is based on the well known representation 
$$
\cE^{g}(X) = \int_0^1 AV@R_{\alpha}(-X)~\mu_{g}(d\alpha) 
= 
\int_0^1\left\{\min_{x\in\R}\ex\left[\frac{(X+x)^{+}}{\alpha} - x\right]\,\mu_{g}(d\alpha)\right\}
$$
for $\pr-$essentially bounded random variables $X$ (see, e.g., \cite[Theorem 4.70 with Lemma 4.51]{FoellmerSchied2011}). By interchanging  integration and minimization (cf. \cite[Theorem 14.60]{rockafellar1998variational}), we can represent  \eqref{stoppproblem} as a solution of some maxmin optimization problem. The second representation \eqref{main_eq_2} is the key result of our paper and shows that we can interchange \(\sup\) with \(\inf.\) The proof of this representation relies on the notion of randomized stopping times and makes use of a novel  approximation result for measurable partitions of unity by indicator functions (see Proposition \ref{Dichtheit} in Appendix \ref{AppendixC}). 
The representation \eqref{main_eq_2} can be used to approximate the solution of \eqref{stoppproblem} via solving a sequence of the standard optimal stopping problems. Finally, the equality \eqref{main_eq_3} means that one can replace the optimization over the set \(L^{1}_{+}(\mu_{g})\) by the optimization over its dense subset. \par 
Let us point out a suitable  choice for the set $\cZ$ in Theorem 
\ref{new_representation}. To this end, let us recall the notion of Bernstein polynomials. By definition, a Bernstein polynomial of degree $n$ is a function $B_{n}$ on $[0,1]$ defined by $B_{n}(x) = \sum_{i=0}^{n} b_{i} B_{i,n}(x)$ for some $b_{0},\dots,b_{n}\in\R,$ where 
\begin{equation}
\label{BernsteinMononom}
B_{i,n}(\alpha) \doteq \left(\begin{array}{c}n\\ i\end{array}\right) \alpha^{i}(1 - \alpha)^{n - i},\quad  i\in\{0,\dots,n\}, \quad n\in\N.
\end{equation}
\begin{corollary}
\label{BernsteinPolynome}
Let $(\Omega,\cF_{t},\left.\pr\right |_{\cF_{t}})$ be atomless with countably generated $\cF_{t}$ for every $t > 0,$ and let $\cZ_{B}$ consist of all mappings $\sum_{i=0}^{n-1}b_{i} B_{i,n}|_{]0,1]}$ with $n\in\N$ and $b_{0},\dots,b_{n-1}\geq 0.$  If (\ref{Annahmen Young function}) is fulfilled and $Y^{*}\in {\cal X}_{g},$  then for any $Z^{0}\in L^{1}[Y^{*},\mu_{g}],$
$$
\sup_{\tau\in\cT} \cE^{g}(Y_{\tau}) 
=
\inf_{Z\in\cZ_{B}(Z^{o})}\sup\limits_{\tau\in\cT} 
\ex\left[\int_0^1\left[\frac{(Y_{\tau} - Z(\alpha))^{+}}{\alpha} + Z(\alpha)\right]~\mu_{g}(d\alpha)\right] < \infty,
$$
where $\cZ_{B}(Z^{o}) \doteq \{\eins_{]0,a]}\cdot Z^{0} + Z\mid Z\in\cZ_{B}, a\in ]0,1[\}.$
\end{corollary}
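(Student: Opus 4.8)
The plan is to deduce Corollary \ref{BernsteinPolynome} directly from Theorem \ref{new_representation} by verifying that the concrete family $\cZ_B$ is an admissible instance of the abstract dense set $\cZ$. The hypotheses on $(\Omega,\cF_t,\pr|_{\cF_t})$, on $Y^{*}\in\cX_g$, and on $Z^{o}\in L^{1}[Y^{*},\mu_g]$ are identical in the two statements, so the only thing to check is, first, the membership $\cZ_B\subseteq\{Z\in L^{1}_{+}(\mu_g)\mid Z(1)=0\}$ and, second, the density of $\cZ_B$ in that set with respect to the $L^{1}(\mu_g)$-norm. Once both hold, specializing \eqref{main_eq_3} to $\cZ=\cZ_B$ yields the asserted representation verbatim, since by definition $\cL^{*}_{\cZ_B}(\mu_g,Z^{o})$ coincides with the set $\cZ_B(Z^{o})$; the finiteness of the right-hand side is inherited from $Y^{*}\in\cX_g$ exactly as in the discussion preceding Subsection \ref{primal}.

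The membership is immediate from \eqref{BernsteinMononom}. Each $B_{i,n}$ is nonnegative on $[0,1]$ and the coefficients $b_0,\dots,b_{n-1}$ are nonnegative, so every $Z\in\cZ_B$ is nonnegative; being a bounded polynomial and $\mu_g$ being a finite measure, it lies in $L^{1}(\mu_g)$. The boundary condition is precisely the reason for truncating the sum at $i=n-1$: since $B_{i,n}(1)=\binom{n}{i}1^{i}0^{n-i}=0$ for every $i<n$, each such $Z$ satisfies $Z(1)=0$, whereas the omitted top monomial $B_{n,n}(\alpha)=\alpha^{n}$ is exactly the one that fails to vanish at $1$.

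The core of the argument is the density claim, which I would carry out in three reduction steps. Given a target $Z\ge 0$ with $Z(1)=0$ and $Z\in L^{1}(\mu_g)$ together with $\varepsilon>0$, I would first approximate $Z$ in $L^{1}(\mu_g)$ by a continuous nonnegative function $f$ on $[0,1]$: continuous functions are dense in $L^{1}$ of a finite Borel measure on $[0,1]$, and replacing $f$ by $f^{+}$ only decreases the $L^{1}$-distance to the nonnegative target $Z$. Second, I would force the boundary value to vanish by passing to $\tilde f\doteq f\cdot\phi_\delta$, where $\phi_\delta$ is a continuous cutoff equal to $1$ on $[0,1-\delta]$, equal to $0$ at $1$, with values in $[0,1]$; this keeps $\tilde f$ continuous, nonnegative, with $\tilde f(1)=0$, and changes the norm by at most $\int_{]1-\delta,1]}f\,d\mu_g$, which tends to $f(1)\,\mu_g(\{1\})$ as $\delta\to 0$. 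The potential atom of $\mu_g$ at $1$ (recall $\mu_g(\{1\})=\lim_{z\to 1-}g'(z)$ from Remark \ref{constructionmeasure}) is controlled because $\|f-Z\|_{L^{1}(\mu_g)}\ge\mu_g(\{1\})\,|f(1)|$, so a good initial approximation forces $f(1)$ to be small and the cutoff cost to be $O(\varepsilon)$.

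The final step exploits a structural coincidence between the Bernstein operator and the constraints defining $\cZ_B$. Applying $B_n\tilde f(\alpha)=\sum_{i=0}^{n}\tilde f(i/n)B_{i,n}(\alpha)$, I observe that the coefficients $\tilde f(i/n)$ are automatically nonnegative (since $\tilde f\ge 0$) and that the top coefficient $\tilde f(n/n)=\tilde f(1)=0$ vanishes, so $B_n\tilde f=\sum_{i=0}^{n-1}\tilde f(i/n)B_{i,n}$ is genuinely an element of $\cZ_B$. By the classical Bernstein approximation theorem $B_n\tilde f\to\tilde f$ uniformly on $[0,1]$, hence in $L^{1}(\mu_g)$ because $\mu_g$ is finite; combining the three estimates by the triangle inequality produces an element of $\cZ_B$ within $O(\varepsilon)$ of $Z$. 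I expect this density reduction to be the main obstacle, and within it the delicate point is the simultaneous handling of nonnegativity and of the boundary condition at $1$ in the presence of the atom of $\mu_g$. What makes everything close cleanly is that the Bernstein operator preserves nonnegativity and reads off the boundary value as the coefficient of $B_{n,n}$, so the approximants require no further adjustment before invoking Theorem \ref{new_representation}.
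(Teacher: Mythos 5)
Your proposal is correct and follows essentially the same route as the paper: specialize \eqref{main_eq_3} of Theorem \ref{new_representation} to $\cZ=\cZ_{B}$, and establish density of $\cZ_{B}$ in $\{Z\in L^{1}_{+}(\mu_{g})\mid Z(1)=0\}$, which the paper does in Lemma \ref{denseBernstein} via Lemma \ref{ApproximationStetigkeit} by exactly your two-stage reduction (a continuous nonnegative approximant vanishing at $1$, then Bernstein approximation, noting the coefficients are nonnegative and the top coefficient vanishes). Your explicit cutoff with atom control via $\|f-Z\|_{L^{1}(\mu_{g})}\geq \mu_{g}(\{1\})\,|f(1)|$ is only a mild variant of the paper's Radon-regularity/Urysohn construction of the continuous approximant; the substance is identical.
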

Let us present some special cases of Theorem \ref{new_representation}. For concave $g,$ we shall denote its right-sided derivative on $]0,1[$ by $g'.$ It is non-increasing so that it might be extended to $[0,1]$ by setting 
$g'(0) \doteq \sup_{\alpha > 0}g'(\alpha)$ and $g'(1) \doteq \inf_{\alpha < 1}g'(\alpha).$ It follows from \cite[Lemma 4.69]{FoellmerSchied2011}, that $g'(0)$ is finite iff 
$\int 1/\alpha~\mu_{g}(d\alpha) < \infty.$ In the case of finite $g'(0),$ we may choose 
$Z^{o}\equiv 0$ in Theorem \ref{new_representation} and Corollary  \ref{BernsteinPolynome} to draw the following immediate conclusion.
\begin{corollary}
\label{simplifiedrepresentation}
Let $\cZ$ be a dense subset of $\{Z\in L^{1}_{+}(\mu_{g})\mid Z(1) = 0 \}$ w.r.t.  $L^{1}-$norm. If $g'(0) < \infty,$ then under the assumptions of Theorem \ref{new_representation} we have
$$
\sup_{\tau\in\cT} \cE^{g}(Y_{\tau}) 
=
\inf_{Z\in\cZ}\sup\limits_{\tau\in\cT} 
\ex\left[\int_0^1\left[\frac{(Y_{\tau} - Z(\alpha))^{+}}{\alpha} + Z(\alpha)\right]~\mu_{g}(d\alpha)\right] < \infty.
$$
\end{corollary}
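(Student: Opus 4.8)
The plan is to read the claim off the third equality \eqref{main_eq_3} of Theorem \ref{new_representation} by making the single admissible choice $Z^{o}\equiv 0$. The only thing that has to be checked is that, under the extra hypothesis $g'(0)<\infty$, the constant zero mapping is a legitimate choice for $Z^{o}$, i.e. that $Z^{o}\equiv 0\in L^{1}[Y^{*},\mu_{g}]$; once this is granted, the index set $\cL^{*}_{\cZ}(\mu_{g},Z^{o})$ degenerates to $\cZ$ and the asserted identity is exactly \eqref{main_eq_3}.

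First I would recall the criterion, quoted from \cite[Lemma 4.69]{FoellmerSchied2011} in the text preceding the statement, that $g'(0)<\infty$ is equivalent to $\int 1/\alpha~\mu_{g}(d\alpha)<\infty$. With $Z^{o}\equiv 0$ the requirements $Z^{o}\in L^{1}_{+}(\mu_{g})$ and $Z^{o}(1)=0$ are trivially met, so the membership $Z^{o}\in L^{1}[Y^{*},\mu_{g}]$ reduces to the single integrability condition $\ex\bigl[\int_{0}^{1}(Y^{*})^{+}/\alpha~\mu_{g}(d\alpha)\bigr]<\infty$. Since the process $(Y_{t})$ is nonnegative we have $(Y^{*})^{+}=Y^{*}$, and as the integral in $\alpha$ is deterministic the expression factorises as $\ex[Y^{*}]\cdot\int_{0}^{1}1/\alpha~\mu_{g}(d\alpha)$. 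Here $\ex[Y^{*}]<\infty$ because $Y^{*}\in\cX_{g}$ and every element of $\cX_{g}$ is $\pr$-integrable (as noted after \eqref{Annahmen Young function}), while the second factor is finite precisely by $g'(0)<\infty$. Hence $Z^{o}\equiv 0\in L^{1}[Y^{*},\mu_{g}]$.

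Finally I would feed $Z^{o}\equiv 0$ into \eqref{main_eq_3}. Since $\eins_{]0,a]}\cdot Z^{o}=0$ for every $a\in ]0,1[$, the set $\cL^{*}_{\cZ}(\mu_{g},Z^{o})=\{\eins_{]0,a]}\cdot Z^{o}+Z\mid Z\in\cZ,\,a\in ]0,1[\}$ collapses to $\cZ$, and \eqref{main_eq_3} becomes $\sup_{\tau\in\cT}\cE^{g}(Y_{\tau})=\inf_{Z\in\cZ}\sup_{\tau\in\cT}\ex[U^{g,Z}_{\tau}]$, which upon unfolding $U^{g,Z}_{\tau}$ from \eqref{Uprocess} is the displayed formula. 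The trailing finiteness $<\infty$ is inherited from the a priori bound $\sup_{\tau\in\cT}\cE^{g}(Y_{\tau})\leq\int_{0}^{\infty}g(1-F_{Y^{*}}(x))~dx<\infty$ recorded at the start of Subsection \ref{primal}. There is no genuine obstacle here: the entire content is the admissibility check of the previous paragraph, and the only subtlety is remembering that the equivalence $g'(0)<\infty\Leftrightarrow\int 1/\alpha~\mu_{g}(d\alpha)<\infty$ together with $\pr$-integrability of $Y^{*}$ is exactly what makes the zero function land in $L^{1}[Y^{*},\mu_{g}]$.
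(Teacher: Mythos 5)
Your proposal is correct and matches the paper's own argument: the paper obtains Corollary \ref{simplifiedrepresentation} precisely by choosing $Z^{o}\equiv 0$ in Theorem \ref{new_representation}, using the stated equivalence $g'(0)<\infty \Leftrightarrow \int 1/\alpha~\mu_{g}(d\alpha)<\infty$ to justify $Z^{o}\equiv 0\in L^{1}[Y^{*},\mu_{g}]$, whereupon $\cL^{*}_{\cZ}(\mu_{g},Z^{o})$ collapses to $\cZ$. Your write-up merely makes explicit the admissibility check (Tonelli factorisation and $\pr$-integrability of $Y^{*}$) that the paper leaves implicit.
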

The latter corollary can be easily generalized to the case of general law invariant coherent risk measures.  First note that with any coherent law invariant risk measure \(\cE\) we can associate a function \(g_{\cE}:\) \([0,1]\to \mathbb{R}\) via
\begin{eqnarray*}
g_{\cE}(\alpha)=\cE\bigl(F^{\leftarrow}_{B(\alpha)}(U)\bigr), \quad \alpha\in [0,1],
\end{eqnarray*}
where $F^{\leftarrow}_{B(\alpha)}$ denotes the left-continuous quantile function of the distribution function $F_{B(\alpha)}$ of a Bernoulli r. v. $B(\alpha)$ with parameter \(\alpha\) and \(U\) is a random variables having uniform distribution on \([0,1].\) 
If $\lim_{\alpha\to0+}g_{\cE}(\alpha) = 0,$ then it is known that any set $\cG$ in the representation \eqref{cEG} is relatively compact w.r.t. the uniform metric consisting of continuous concave distortion functions only (see Belomestny and Kr\"atschmer \cite{BelomestnyKraetschmer2012}). This continuity condition is already fulfilled if ${\cal X}$ may be equipped with a complete $\sigma-$order continuous norm $\|\cdot\|_{{\cal X}}$ such that ${\cal X}$ is a Banach lattice w.r.t. this norm and the partial order $\succeq$. To recall, a norm $\|\cdot\|_{\cX}$ on $\cX$ is said to be $\sigma-$order continuous if  
$$
\lim_{k\to\infty}\|X_{k}\|_{\cX} = 0\quad\mbox{whenever}~X_{k}\searrow 0~\pr-\mbox{a.s.}
$$
(cf. Ruszczynski and Shapiro (\cite{RuszczynskiShapiro2006}), or Cheridito and Li (\cite{cheridito2009risk})). Of course $L^{p}-$norms on $L^{p}-$spaces are $\sigma-$order continuous for $p\in [1,\infty[$. Also for any continuous concave distortion function $g$, the complete norm $\|\cdot\|_{\cX_{g}}$ on $\cX_{g}$, as defined in \eqref{distortionNorm}, satisfies $\sigma-$order continuity due to the dominated convergence theorem.
\begin{corollary}
\label{simplifiedrepresentation_cohrisk}
Let \(\cE\) be a law invariant coherent risk measure satisfying $\lim_{\alpha\to0+}g_{\cE}(\alpha) = 0$, and let us fix any representation of the form  \eqref{cEG}. If \(\sup_{\alpha\in ]0,1]} g_{\cE}(\alpha)/\alpha<\infty\)  and for any \(g\in \cG,\) the set \(\cZ\) is dense in $\{Z\in L^{1}_{+}(\mu_{g})\mid Z(1) = 0\}$ w.r.t. the $L^{1}-$norm defined by $\mu_{g}$, then 
$$
\sup_{\tau\in\cT} \cE(Y_{\tau}) 
=
\sup_{g\in \cG}\inf_{Z\in\cZ}\sup\limits_{\tau\in\cT} 
\ex\left[\int_0^1\left[\frac{(Y_{\tau} - Z(\alpha))^{+}}{\alpha} + Z(\alpha)\right]~\mu_{g}(d\alpha)\right] < \infty.
$$
\end{corollary}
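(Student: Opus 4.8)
The plan is to reduce the coherent statement to Corollary~\ref{simplifiedrepresentation}, applied separately to each $g\in\cG$, and then to pass to the supremum over $\cG$. The decisive preliminary observation is that the envelope $g_{\cE}$ coincides with the pointwise supremum of the fixed family $\cG$. I would establish this by evaluating $\cE^{g}$ on a Bernoulli variable: since $F^{\leftarrow}_{B(\alpha)}(U)$ has the same law as $B(\alpha)$, formula \eqref{distorted_expectation} gives $\cE^{g}(F^{\leftarrow}_{B(\alpha)}(U)) = \int_{0}^{1} g(\alpha)\,dx = g(\alpha)$, so that by the representation \eqref{cEG},
\[
g_{\cE}(\alpha) = \cE\bigl(F^{\leftarrow}_{B(\alpha)}(U)\bigr) = \sup_{g\in\cG} g(\alpha),\qquad \alpha\in]0,1].
\]
In particular $g(\alpha)\le g_{\cE}(\alpha)$ for every $g\in\cG$ and all $\alpha$.

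Next I would turn the two standing hypotheses into a uniform statement about the individual distortions. From $\lim_{\alpha\to 0+}g_{\cE}(\alpha)=0$ and Belomestny and Kr\"atschmer \cite{BelomestnyKraetschmer2012}, the fixed set $\cG$ consists of continuous concave distortion functions only, so \eqref{Annahmen Young function} holds for every $g\in\cG$ and $g(0)=0$. Concavity together with $g(0)=0$ makes $\alpha\mapsto g(\alpha)/\alpha$ non-increasing, whence
\[
g'(0) = \sup_{\alpha\in]0,1]}\frac{g(\alpha)}{\alpha} \le \sup_{\alpha\in]0,1]}\frac{g_{\cE}(\alpha)}{\alpha} < \infty
\]
by the envelope identity and the hypothesis $\sup_{\alpha\in]0,1]}g_{\cE}(\alpha)/\alpha<\infty$. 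By \cite[Lemma 4.69]{FoellmerSchied2011}, $g'(0)<\infty$ is equivalent to $\int 1/\alpha~\mu_{g}(d\alpha)<\infty$, so the finiteness hypothesis of Corollary~\ref{simplifiedrepresentation} is met for every $g\in\cG$ at once.

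It then remains to check $Y^{*}\in\cX_{g}$ for every $g$ and to assemble the pieces. Recalling that $Y^{*}\in\cX$ and that $\cX$ is stable under $|\cdot|$ while $\cE$ is real-valued on $\cX$, I would write $\int_{0}^{\infty} g(1-F_{|Y^{*}|}(x))\,dx = \cE^{g}(|Y^{*}|)\le\cE(|Y^{*}|)<\infty$, i.e. $Y^{*}\in\cX_{g}$ for every $g\in\cG$. With $g'(0)<\infty$, $Y^{*}\in\cX_{g}$, the density of $\cZ$, and the structural assumptions on the filtration all in force, Corollary~\ref{simplifiedrepresentation} yields, for each $g\in\cG$,
\[
\sup_{\tau\in\cT}\cE^{g}(Y_{\tau}) = \inf_{Z\in\cZ}\sup_{\tau\in\cT}\ex\bigl[U^{g,Z}_{\tau}\bigr] < \infty .
\]
Since $\cE(Y_{\tau}) = \sup_{g\in\cG}\cE^{g}(Y_{\tau})$ for every $\tau\in\cT$, commuting the two suprema gives
\[
\sup_{\tau\in\cT}\cE(Y_{\tau}) = \sup_{g\in\cG}\sup_{\tau\in\cT}\cE^{g}(Y_{\tau}) = \sup_{g\in\cG}\inf_{Z\in\cZ}\sup_{\tau\in\cT}\ex\bigl[U^{g,Z}_{\tau}\bigr],
\]
which is the assertion; global finiteness follows from $\sup_{\tau\in\cT}\cE(Y_{\tau})\le\cE(Y^{*})<\infty$ by monotonicity. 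Equivalently, one may invoke \eqref{main_eq_coh} with the choice $Z^{o}\equiv 0$, observing that then $\cL^{*}_{\cZ}(Z^{o})=\cZ$.

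Finally, the main obstacle. The only genuinely non-routine point is the envelope identity $g_{\cE}=\sup_{g\in\cG}g$ of the first paragraph; everything afterwards is either a standard convex-analytic fact about concave distortions or a direct appeal to results already in hand. I expect the delicate aspect to be ensuring that the fixed representing family $\cG$ truly consists of continuous concave functions, so that \eqref{Annahmen Young function} --- and hence Corollary~\ref{simplifiedrepresentation} --- genuinely applies; this is precisely what the condition $\lim_{\alpha\to0+}g_{\cE}(\alpha)=0$ secures through \cite{BelomestnyKraetschmer2012}. Without it, both the envelope reduction and the per-$g$ argument would fail.
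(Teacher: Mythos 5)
Your proposal is correct and follows essentially the route the paper intends: the paper states this corollary without a separate proof, the implicit argument being exactly your reduction --- exchange the two suprema via the representation \eqref{cEG}, and apply Corollary~\ref{simplifiedrepresentation} to each $g\in\cG$, with the surrounding discussion (continuity and concavity of the members of $\cG$ via \cite{BelomestnyKraetschmer2012} under $\lim_{\alpha\to0+}g_{\cE}(\alpha)=0$) supplying the hypotheses. Your explicit verifications --- the envelope identity $g_{\cE}=\sup_{g\in\cG}g$ giving $g'(0)\le\sup_{\alpha}g_{\cE}(\alpha)/\alpha<\infty$, and $Y^{*}\in\cX_{g}$ from $\cE^{g}(|Y^{*}|)\le\cE(|Y^{*}|)<\infty$ --- correctly fill in the details the paper leaves implicit.
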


Corollary \ref{simplifiedrepresentation} may be further simplified if $\mu_{g}$ has finite support. In this case, each element of  $L^{1}_{+}(\mu_{g})$ may be identified with $[0,\infty[^{m},$ where $m$ denotes the cardinality of the support of $\mu_{g}.$ Then, as an immediate consequence of Corollary \ref{simplifiedrepresentation}, we obtain the following primal representation for the optimal stopping problem 
(\ref{stoppproblem}).
\begin{corollary}
\label{representation AV@R}
Let $g$ fulfill (\ref{Annahmen Young function}), and let $\mu_{g}$ have a finite support $supp(\mu_{g}) \doteq \{\alpha_{1},\dots,\alpha_{m}\}$ with $\alpha_{m} = \max supp(\mu_{g}).$ Then under the assumptions of Theorem \ref{new_representation}, the following statements are valid.
\begin{enumerate}
\item [(i)] 
If $\alpha_{m} < 1,$ then 
$$
\sup_{\tau\in\cT}\cE^{g}(Y_{\tau}) 
= 
\inf\limits_{x_{1},\dots,x_{m}\geq 0}\sup\limits_{\tau\in\cT}
\sum\limits_{i=1}^{m}\ex\left[\frac{1}{\alpha_{i}}\, (Y_{\tau} - x_{i})^{+} + x_{i}\right]\mu_{g}(\{\alpha_{i}\}).
$$
\item [(ii)] 
If $\alpha_{m} = 1$ with $m\geq 2,$ then 
$$
\sup_{\tau\in\cT}\cE^{g}(Y_{\tau}) 
= 
\inf\limits_{x_{1},\dots,x_{m-1}\geq 0}\sup\limits_{\tau\in\cT}\left(~
\sum\limits_{i=1}^{m-1}\ex\left[\frac{1}{\alpha_{i}}\, (Y_{\tau} - x_{i})^{+} + x_{i}\right]\mu_{g}(\{\alpha_{i}\}) + \ex[Y_{\tau}]\mu_{g}(\{1\})\right).
$$
\item [(iii)]
If $\alpha_{m} = 1$ with $m = 1,$ then 
$$
\sup_{\tau\in\cT}\cE^{g}(Y_{\tau}) 
= 
\sup\limits_{\tau\in\cT} \ex\left[Y_{\tau}\right].
$$
\end{enumerate} 
\end{corollary}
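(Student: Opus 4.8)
The plan is to obtain all three statements directly from Corollary \ref{simplifiedrepresentation} by exploiting the finite-dimensional structure that a finitely supported $\mu_g$ imposes on $L^1_+(\mu_g)$, so the only real work is the finite-sum specialization together with some bookkeeping around the atom at $\alpha=1$.

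First I would check that the hypothesis $g'(0)<\infty$ of Corollary \ref{simplifiedrepresentation} is automatically satisfied here. Since $g'(0)<\infty$ is equivalent to $\int 1/\alpha~\mu_g(d\alpha)<\infty$ (by \cite[Lemma 4.69]{FoellmerSchied2011}) and the support of $\mu_g$ is the finite set $\{\alpha_1,\dots,\alpha_m\}\subseteq\,]0,1]$ with every $\alpha_i>0$, we have $\int 1/\alpha~\mu_g(d\alpha)=\sum_{i=1}^m \alpha_i^{-1}\,\mu_g(\{\alpha_i\})<\infty$. All the remaining assumptions of Theorem \ref{new_representation} (atomlessness, countable generation, and $Y^{*}\in{\cal X}_{g}$) are inherited by hypothesis, so Corollary \ref{simplifiedrepresentation} applies and delivers a finite value.

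Second, I would make the finite-dimensional identification explicit. Because $\mu_g$ is concentrated on $\{\alpha_1,\dots,\alpha_m\}$, every element of $L^1_+(\mu_g)$ is determined modulo $\mu_g$-a.s. equivalence by the vector $(Z(\alpha_1),\dots,Z(\alpha_m))\in[0,\infty[^{m}$, and conversely each such vector is realized by some $Z\in L^1_+(\mu_g)$. The set $\{Z\in L^{1}_{+}(\mu_{g})\mid Z(1)=0\}$ is trivially dense in itself w.r.t.\ the $L^1(\mu_g)$-norm, hence is an admissible choice for $\cZ$ in Corollary \ref{simplifiedrepresentation}. Under this identification the defining integral of the $U$-process collapses to the finite sum $\sum_{i=1}^m \bigl[\alpha_i^{-1}(Y_\tau-x_i)^{+}+x_i\bigr]\,\mu_g(\{\alpha_i\})$ with $x_i=Z(\alpha_i)$.

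Third, I would split into the three cases according to whether $1$ is an atom of $\mu_g$. In case (i), $\alpha_m<1$ means $\mu_g(\{1\})=0$, so the constraint $Z(1)=0$ does not restrict the $\mu_g$-equivalence class and the infimum runs over all $(x_1,\dots,x_m)\in[0,\infty[^{m}$, giving (i). In cases (ii) and (iii), $\alpha_m=1$ is a genuine atom, so $Z(1)=0$ forces $x_m=0$; since the process $(Y_t)$ is nonnegative we have $Y_\tau\geq 0$, hence the $i=m$ summand equals $(Y_\tau)^{+}\mu_g(\{1\})=Y_\tau\,\mu_g(\{1\})$, and the remaining infimum runs over $(x_1,\dots,x_{m-1})\in[0,\infty[^{m-1}$, which is (ii). When additionally $m=1$, no free variable remains, $\mu_g$ is the Dirac mass at $1$ (being a probability measure supported on $\{1\}$), and the expression reduces to $\sup_{\tau\in\cT}\ex[Y_\tau]$, establishing (iii). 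The only genuinely delicate point is this bookkeeping at $\alpha=1$: one must track that $Z(1)=0$ removes exactly the $m$-th coordinate and that $\alpha_m^{-1}=1$ together with $Y_\tau\geq 0$ turns $(Y_\tau)^{+}$ into $Y_\tau$; everything else is a routine specialization of Corollary \ref{simplifiedrepresentation}, which is why the result is indeed immediate.
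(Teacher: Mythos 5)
Your proposal is correct and takes essentially the same route as the paper: the paper also obtains this corollary as an immediate specialization of Corollary \ref{simplifiedrepresentation}, identifying each element of $L^{1}_{+}(\mu_{g})$ with a vector in $[0,\infty[^{m}$ when $\mu_{g}$ has finite support. The details you spell out---that $g'(0)<\infty$ holds automatically because $\int \alpha^{-1}\,\mu_{g}(d\alpha)=\sum_{i=1}^{m}\alpha_{i}^{-1}\mu_{g}(\{\alpha_{i}\})<\infty$, and that the constraint $Z(1)=0$ is vacuous on $\mu_{g}$-equivalence classes when $\mu_{g}(\{1\})=0$ but forces $x_{m}=0$ (whence the $m$-th summand becomes $\ex[Y_{\tau}]\mu_{g}(\{1\})$ by nonnegativity of $Y$) when $\alpha_{m}=1$---are exactly the bookkeeping the paper leaves implicit.
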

\begin{remark}
\label{constructionmeasure1}
The measure \(\mu_g\) has the finite support \(\{\alpha_{1},\dots,\alpha_{m}\}\) with 
$\alpha_{i} > \alpha_{i-1}$ for $i=2,\dots,m,$ if and only if 
\(g'\) is constant on each interval \(]\alpha_{i-1},\alpha_i[,\) \(i=1,\ldots, m +1,\) with $\alpha_{0} \doteq 0$ and $\alpha_{m+1} \doteq 1$ by definition. In this case, we may draw on Remark 
\ref{constructionmeasure} to conclude
\begin{eqnarray*}
\mu_{g}(\{\alpha_1\})&=& \alpha_1\,[g'(0+)-g'(\alpha_1)]
\\
\mu_{g}(\{\alpha_i\})&=& \alpha_i\,[g'(\alpha_{i-1})-g'(\alpha_i)], \quad i=2,\ldots, m, \quad \alpha_{m} < 1
\\
\mu_{g}(\{\alpha_m\})&=& g'(\alpha_{m-1}), \quad \alpha_{m} = 1. 
\end{eqnarray*}

\end{remark}
\par
\begin{example}[Optimal stopping under absolute semideviation]
\label{semidev}
Let us turn again to the absolute semideviation risk measure
\begin{eqnarray*}
\cE(X)=\E[X]+c\,\E\bigl[(X-\E(X))^{+}\bigr].
\end{eqnarray*}
As can be easily seen, the associated function \(g_{\cE}\) has the form 
\begin{eqnarray*}
g_{\cE}(\alpha)=\alpha+c\, \alpha\, (1-\alpha)
\end{eqnarray*}
implying $\lim_{\alpha\to0+}g_{\cE}(\alpha) = 0,$ and \(\sup_{\alpha\in ]0,1]} g_{\cE}(\alpha)/\alpha=1+c.\)  Now, we may apply 
Corollary \ref{representation AV@R} along with Remark \ref{constructionmeasure1} and representation \eqref{semidev_kus1} to obtain the following representation
\begin{eqnarray}
\sup_{\tau\in\cT} \cE(Y_{\tau}) &=& \sup_{\varkappa\in ]0,1[} \inf_{x\geq 0} \sup_{\tau\in \cT} \mathrm{E}\left[c(Y_{\tau}-x)^{+}+c\varkappa x+(1-c\varkappa)Y_{\tau}\right].
\label{as_primal}
\end{eqnarray}

\end{example}
\begin{example}[Optimal stopping under expectiles] 
Let for $\alpha\in ]1/2,1[$ consider the $\alpha-$expectile 
$$
\cE_{\alpha}: L^{1}\OFP\rightarrow\R,\, X\mapsto 
\cE_\alpha(X)\doteq
      \inf\left\{x\in\R\,\mid\,\alpha\ex\left[(X-x)^{+}\right] - (1-\alpha)\ex\left[(X-x)^{-}\right] = 0\right\}.
$$
The associated distortion function $g_{\cE_{\alpha}}$ is defined by $g_{\cE_{\alpha}}(\beta) = \alpha \beta/[\beta (2\alpha - 1) + 1 - \alpha].$ In particular, 
$\lim_{\beta\to0+}g_{\cE}(\beta) = 0$ and \(\sup_{\beta\in ]0,1]} g_{\cE}(\beta)/\beta = \alpha/(1-\alpha)\). The application of 
Corollary \ref{representation AV@R} along with Remark \ref{constructionmeasure1} to 
the representation \eqref{Expectile_Kusuoka1} yields
\begin{eqnarray*}
\sup_{\tau\in\cT} \cE_{\alpha}(Y_{\tau}) 
= 
\sup_{\gamma\in [\alpha/(1-\alpha),1]} 
\inf_{x\geq 0} \sup_{\tau\in \cT}\E\Bigl[\frac{\gamma (2\alpha - 1)}{1-\alpha}\, (Y_\tau-x)^{+}\,+\, \gamma Y_{\tau}\,+\, (1-\gamma) x\Bigr].
\end{eqnarray*}
Concerning the mapping
$$
\phi: [0,\infty[\times [\alpha/(1-\alpha),1]\rightarrow\R,~(x,\gamma)\mapsto 
\sup_{\tau\in\cT}\E\Bigl[\frac{\gamma (2\alpha - 1)}{1-\alpha}\, (Y_\tau-x)^{+}\, +\,  \gamma Y_{\tau}\,+\,(1-\gamma) x\Bigr]
$$
we may verify easily that $\phi(\cdot,\gamma)$ is convex for $\gamma\in [\alpha/(1-\alpha),1],$ and that $\phi(x,\cdot)$ is concave as well as continuous. Hence we may conclude by the Ky Fan minimax theorem for convex mappings that
$$
\sup_{\gamma\in [\alpha/(1-\alpha),1]}\inf_{x\geq 0}\phi(x,\gamma) = 
\inf_{x\geq 0}\sup_{\gamma\in [\alpha/(1-\alpha),1]}\phi(x,\gamma).
$$
Hence 
\begin{eqnarray}
\sup_{\tau\in\cT} \cE_{\alpha}(Y_{\tau}) 
&=& 
\nonumber
\sup_{\gamma\in [(1-\alpha)/\alpha,1]} 
\inf_{x\geq 0} \sup_{\tau\in \cT} \E\Bigl[\frac{\gamma (2\alpha - 1)}{1-\alpha}\, (Y_\tau-x)^{+}\,+\, \gamma Y_{\tau}\,+\,(1-\gamma) x\Bigr]\\
&=& 
\inf_{x\geq 0}\sup_{\gamma\in [(1-\alpha)/\alpha,1]} 
 \sup_{\tau\in \cT} \E\Bigl[\frac{\gamma (2\alpha - 1)}{1-\alpha}\, (Y_\tau-x)^{+}\,+\, \gamma Y_{\tau}\,+\,(1-\gamma) x\Bigr].
\label{ExpectilePrimal1} 
\end{eqnarray}
\end{example}
\medskip

We may also derive an alternative simplified representation of the stopping problem \eqref{stoppproblem} in the case of $\pr-$essentially bounded $Y^{*}.$ Then the key observation is that a $\delta\eins_{]0,1[}$ for a constant $\delta$ belongs to 
$L^{1}[Y^{*},\mu_{g}]$ if $\delta\geq Y^{*}.$ Denote the space of all real-valued uniformly continuous mappings on $]0,1]$ by $C_{u}(]0,1]).$    
\begin{theorem}
\label{boundedcashflow}
Let $(\Omega,\cF_{t},\left.\pr\right |_{\cF_{t}})$ be atomless with countably generated $\cF_{t}$ for every $t > 0,$ and let $\cZ$ be a dense subset of $\{Z\in \cC_{u}(]0,1])\mid Z\geq 0, Z(1) = 0\}$  w.r.t. the supremum norm on $\cC_{u}(]0,1]).$  If  (\ref{Annahmen Young function}) is fulfilled and $Y^{*}$ is $\mu_{g}-$essentially bounded with 
$\mu_{g}-$essential supremum $|Y^{*}|_{\infty},$ then 
\begin{eqnarray*}
\sup_{\tau\in\cT} \cE^{g}(Y_{\tau}) 
&=&
\inf_{Z\in \cZ}\sup\limits_{\tau\in\cT} \ex\left[U^{g,Z}_{\tau}\right]\\ 
&=&
\inf_{Z\in L^{1}_{+}(\mu_{g},Z_{\delta})\cap\cZ}\sup\limits_{\tau\in\cT} \ex\left[U^{g,Z}_{\tau}\right]\quad
\mbox{for all}~\delta\geq |Y^*|_{\infty},
\end{eqnarray*}
where $Z_{\delta}\doteq\delta\eins_{]0,1[}$.
\end{theorem}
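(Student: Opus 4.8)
The plan is to deduce everything from Theorem~\ref{new_representation}, the only genuinely new work being the passage from the jump-type test functions appearing in $\cL^{*}_{\cZ}(\mu_{g},Z^{o})$ to the uniformly continuous test functions in $\cZ$. First I would record the two facts that make Theorem~\ref{new_representation} applicable. Since $Y^{*}\le|Y^{*}|_{\infty}$ almost surely, $g(1-F_{Y^{*}}(\cdot))$ vanishes beyond $|Y^{*}|_{\infty}$ and is bounded by $1$, so $Y^{*}\in\cX_{g}$; and for any $\delta'\geq|Y^{*}|_{\infty}$ one has $Z_{\delta'}\doteq\delta'\eins_{]0,1[}\in L^{1}[Y^{*},\mu_{g}]$, because $(Y^{*}-\delta')^{+}=0$ $\pr$-a.s.\ on $]0,1[$ while the only surviving contribution, at $\alpha=1$, equals $\ex[Y^{*}]\,\mu_{g}(\{1\})<\infty$. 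I would prove the second equality first (for a fixed $\delta\geq|Y^{*}|_{\infty}$) and then read off the first.

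For the lower bounds (``$\geq$''), note that every element of $\cZ$ is bounded, nonnegative and vanishes at $1$, hence lies in $\{Z\in L^{1}_{+}(\mu_{g})\mid Z(1)=0\}$; the same holds for $L^{1}_{+}(\mu_{g},Z_{\delta})\cap\cZ$. Restricting the infimum in \eqref{main_eq_2} to a subset can only enlarge it, so both $\inf_{Z\in\cZ}\sup_{\tau}\ex[U^{g,Z}_{\tau}]$ and $\inf_{Z\in L^{1}_{+}(\mu_{g},Z_{\delta})\cap\cZ}\sup_{\tau}\ex[U^{g,Z}_{\tau}]$ are bounded below by $\sup_{\tau}\cE^{g}(Y_{\tau})$.

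The heart of the matter is the reverse inequality for the second representation. Fix $\epsilon>0$ and put $\delta'\doteq\delta+1$. Applying \eqref{main_eq_3} with $Z^{o}=Z_{\delta'}$ and with the ($L^{1}$-dense) set $\cZ'$ of nonnegative continuous functions on $[0,1]$ vanishing at $1$, I obtain $a\in]0,1[$ and $Z'\in\cZ'$ such that $W\doteq\delta'\eins_{]0,a]}+Z'$ satisfies $\sup_{\tau}\ex[U^{g,W}_{\tau}]<\sup_{\tau}\cE^{g}(Y_{\tau})+\epsilon$. The function $W$ jumps downward by $\delta'$ at $a$, so I smooth it: choosing $a'\in]a,1[$ and letting $\rho$ be the continuous cutoff that equals $1$ on $]0,a]$, decreases linearly to $0$ on $[a,a']$ and is $0$ on $[a',1]$, set $\tilde Z\doteq Z'+\delta'\rho$. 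Then $\tilde Z$ is uniformly continuous, nonnegative, $\tilde Z(1)=0$, $\tilde Z\geq\delta'$ on $]0,a]$, and $\tilde Z=W$ off $]a,a']$ while $\tilde Z\geq W$ on $]a,a']$; a pointwise comparison of integrands gives $U^{g,\tilde Z}_{\tau}\leq U^{g,W}_{\tau}+\delta'\mu_{g}(]a,a'])$, and since $\mu_{g}(]a,a'])\to0$ as $a'\searrow a$ I can arrange $\sup_{\tau}\ex[U^{g,\tilde Z}_{\tau}]<\sup_{\tau}\cE^{g}(Y_{\tau})+2\epsilon$.

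Finally I pass from $\tilde Z$ to $\cZ$. By sup-norm density, pick $Z\in\cZ$ with $\|Z-\tilde Z\|_{\infty}<\eta$ for an $\eta<\delta'-\delta=1$ still to be fixed. On $]0,a]$ this forces $Z\geq\delta'-\eta>\delta\geq|Y^{*}|_{\infty}$, so $Z\in L^{1}_{+}(\mu_{g},Z_{\delta})\cap\cZ$ and, crucially, $(Y_{\tau}-Z(\alpha))^{+}=(Y_{\tau}-\tilde Z(\alpha))^{+}=0$ for $\alpha\in]0,a]$. This is exactly what tames the singular factor $1/\alpha$: the term $\tfrac{1}{\alpha}\bigl|(Y_{\tau}-Z)^{+}-(Y_{\tau}-\tilde Z)^{+}\bigr|$ vanishes on $]0,a]$ and is bounded by $\eta/a$ on $]a,1]$, whence $|U^{g,Z}_{\tau}-U^{g,\tilde Z}_{\tau}|\leq\eta(1+1/a)$ uniformly in $\tau$. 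Choosing $\eta$ small enough yields $\sup_{\tau}\ex[U^{g,Z}_{\tau}]<\sup_{\tau}\cE^{g}(Y_{\tau})+3\epsilon$, and letting $\epsilon\downarrow0$ establishes ``$\leq$'' for the second equality, hence equality. The first equality then follows by sandwiching, since $L^{1}_{+}(\mu_{g},Z_{\delta})\cap\cZ\subseteq\cZ$ gives $\inf_{Z\in\cZ}\sup_{\tau}\ex[U^{g,Z}_{\tau}]\leq\inf_{Z\in L^{1}_{+}(\mu_{g},Z_{\delta})\cap\cZ}\sup_{\tau}\ex[U^{g,Z}_{\tau}]=\sup_{\tau}\cE^{g}(Y_{\tau})$, while the lower bound above supplies the opposite inequality. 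The one delicate point throughout --- and the reason the hypotheses force domination near $\alpha=0$ --- is precisely that $Z\mapsto\sup_{\tau}\ex[U^{g,Z}_{\tau}]$ is \emph{not} sup-norm continuous in general because of the $1/\alpha$ weight; it becomes continuous only once the competing test functions are pinned above $Y^{*}$ near $\alpha=0$, which is what the smoothing-plus-domination construction guarantees.
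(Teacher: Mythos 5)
Your proposal is correct and follows essentially the same route as the paper's proof: apply Theorem~\ref{new_representation} with a constant barrier $Z^{o}=\delta'\eins_{]0,1[}$, $\delta'=\delta+1\geq|Y^{*}|_{\infty}$, replace the indicator $\eins_{]0,a]}$ by a continuous cutoff, and then approximate in sup-norm by elements of $\cZ$, the key point in both arguments being that the approximants stay $\geq\delta\geq|Y^{*}|_{\infty}$ near $\alpha=0$, so the $1/\alpha$-weighted positive parts vanish there and the remaining difference is controlled by $(1+1/a)$ times the sup-norm distance (this is exactly the paper's estimate \eqref{direktUngleichung}). The only cosmetic deviations are your single cutoff $\rho$ with the explicit bound $\delta'\mu_{g}(]a,a'])$ in place of the paper's antitone sequence $f_{n}$ combined with dominated convergence, and your direct appeal to the $L^{1}(\mu_{g})$-density of continuous nonnegative functions vanishing at $1$, which is the paper's Lemma~\ref{ApproximationStetigkeit}.
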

The proof of Theorem \ref{boundedcashflow} is delegated to Subsection \ref{proofboundedcashflow}.
\medskip

Let $\cZ_{B}$ consist of all mappings $\sum_{i=0}^{n-1}b_{i} B_{i,n}|_{]0,1]}$ with 
$n\in\N; b_{0},\dots,b_{n-1}\geq 0,$ where $B_{i,n}$ is defined as in \eqref{BernsteinMononom}. As can be easily seen, $\cZ_{B}$ is a dense subset of $\{Z\in \cC_{u}(]0,1])\mid Z\geq 0,~Z(1) = 0\}$ w.r.t. the supremum norm on $\cC(]0,1])$ by Stone-Weierstra{\ss} theorem. For any \(\delta>0,\) denote
\begin{eqnarray*}
\cZ_{B}^{\delta}
&\doteq& 
\left\{\sum_{i=0}^{n-1}b_{i} B_{i,n}|_{]0,1]}\,\Big|\, n\in\N, b_{0}\geq\delta; b_{0},\dots,b_{n-1}\geq 0\right\}.
\end{eqnarray*}
We have $L^{1}_{+}(\mu_{g},Z_{\delta})\cap \cZ_{B}\subseteq\cZ_{B}^{\delta}\subseteq\cZ_{B}$ so that
we obtain the following immediate consequence of Theorem \ref{boundedcashflow}.
\begin{corollary}
\label{boundedcashflowBernstein}
Under assumptions of Theorem~\ref{boundedcashflow}
\begin{eqnarray*}
\sup_{\tau\in\cT} \cE^{g}(Y_{\tau}) 
&=&
\inf_{Z\in \cZ_{B}^{\delta}}\sup\limits_{\tau\in\cT} \ex\left[U^{g,Z}_{\tau}\right]\quad
\mbox{for all}~\delta\geq |Y^{*}|_{\infty}.
\end{eqnarray*}
\end{corollary}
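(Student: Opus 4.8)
The plan is to deduce the claim directly from Theorem~\ref{boundedcashflow} by a monotonicity (sandwich) argument built on the inclusion chain
$$
L^{1}_{+}(\mu_{g},Z_{\delta})\cap \cZ_{B}\ \subseteq\ \cZ_{B}^{\delta}\ \subseteq\ \cZ_{B}.
$$
First I would record that, by the Bernstein approximation theorem (Stone-Weierstra{\ss}) as already noted just above the statement, $\cZ_{B}$ is a dense subset of $\{Z\in\cC_{u}(]0,1])\mid Z\geq 0,\ Z(1)=0\}$ with respect to the supremum norm. Thus $\cZ_{B}$ is an admissible choice for the set $\cZ$ in Theorem~\ref{boundedcashflow}, and for every $\delta\geq|Y^{*}|_{\infty}$ that theorem supplies the two identities
\begin{eqnarray*}
\sup_{\tau\in\cT}\cE^{g}(Y_{\tau})
&=& \inf_{Z\in\cZ_{B}}\sup_{\tau\in\cT}\ex\left[U^{g,Z}_{\tau}\right]\\
&=& \inf_{Z\in L^{1}_{+}(\mu_{g},Z_{\delta})\cap\cZ_{B}}\sup_{\tau\in\cT}\ex\left[U^{g,Z}_{\tau}\right].
\end{eqnarray*}
Hence the two ends of the inclusion chain produce infima that both coincide with the value $\sup_{\tau\in\cT}\cE^{g}(Y_{\tau})$.

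Next I would exploit that the infimum of a fixed functional is antitone under set inclusion, since enlarging the feasible set can only lower the infimum. Abbreviating $V(Z)\doteq\sup_{\tau\in\cT}\ex\left[U^{g,Z}_{\tau}\right]$, the chain above then forces
$$
\inf_{Z\in\cZ_{B}}V(Z)\ \leq\ \inf_{Z\in\cZ_{B}^{\delta}}V(Z)\ \leq\ \inf_{Z\in L^{1}_{+}(\mu_{g},Z_{\delta})\cap\cZ_{B}}V(Z).
$$
Because the outer two infima equal $\sup_{\tau\in\cT}\cE^{g}(Y_{\tau})$ by the previous display, the middle term is squeezed and must equal the same value, which is precisely the assertion; its finiteness has already been recorded in Theorem~\ref{boundedcashflow}.

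The only step that is not purely formal, and which I regard as the main (if minor) obstacle, is verifying the first inclusion $L^{1}_{+}(\mu_{g},Z_{\delta})\cap\cZ_{B}\subseteq\cZ_{B}^{\delta}$. Here I would use the behaviour of the Bernstein basis at the left endpoint: since $B_{0,n}(\alpha)=(1-\alpha)^{n}\to1$ and $B_{i,n}(\alpha)\to0$ for $i\geq1$ as $\alpha\to0+$, any $Z=\sum_{i=0}^{n-1}b_{i}B_{i,n}|_{]0,1]}\in\cZ_{B}$ satisfies $\lim_{\alpha\to0+}Z(\alpha)=b_{0}$. Membership of such a $Z$ in $L^{1}_{+}(\mu_{g},Z_{\delta})$ means exactly that $Z(\alpha)-\delta\geq0$ on some interval $]0,a]$ with $a\in]0,1[$; letting $\alpha\to0+$ gives $b_{0}\geq\delta$, i.e. $Z\in\cZ_{B}^{\delta}$. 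The reverse inclusion $\cZ_{B}^{\delta}\subseteq\cZ_{B}$ is immediate from the definitions, so nothing further is required.
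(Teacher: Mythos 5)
Your proposal is correct and follows essentially the same route as the paper: apply Theorem~\ref{boundedcashflow} with $\cZ=\cZ_{B}$ (dense by Bernstein/Stone--Weierstra{\ss}) and squeeze $\inf_{Z\in\cZ_{B}^{\delta}}$ between the two infima via the inclusion chain $L^{1}_{+}(\mu_{g},Z_{\delta})\cap\cZ_{B}\subseteq\cZ_{B}^{\delta}\subseteq\cZ_{B}$. Your endpoint-limit argument ($b_{0}=\lim_{\alpha\to0+}Z(\alpha)\geq\delta$) is a nice explicit verification of the first inclusion, which the paper merely asserts.
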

\begin{example}[Optimal stopping under MINMAXVAR]
In the case of MINMAXVAR distortions, we get from Remark \ref{constructionmeasure}
\begin{eqnarray*}
\mu_{g_p}(]0,z])&=&g_p(z)-zg'_p(z), 
\\
&=& 1-(1-z^{1/(1+p)})^{1+p}-(1-z^{1/(1+p)})^p z^{1/(1+p)} 
\end{eqnarray*}
for  \(z\in ]0,1[\), and 
$$
\mu_{g}(\{1\}) = \nu_{g}(\{1\}) = \lim_{z\to 1-} g_{p}'(z) = 
\lim_{z\to 1-}(1-z^{1/(1+p)})^p z^{-p/(1+p)} = 0.
$$
Thus $\mu_{g}$ has a Lebesgue density $f_{g}: ]0,1]\rightarrow\R,$ defined by $f_{g}(1) = 0$ and
$$
f_{g}(z) = -z g_{p}''(z) = \frac{p}{1+p}\,\big(1- z^{1/(1+p)}\big)^{p-1} z^{-p/(1 + p)}
$$
for $z\in ]0,1[.$ 
Hence in the case of essentially bounded \(Y^*,\) we have
\begin{eqnarray*}
\sup_{\tau\in\cT} \cE^{g}(Y_{\tau}) 
&=&
\inf_{Z\in \cZ^\delta_B}\sup\limits_{\tau\in\cT} \ex\left\{\int_0^1\left[\frac{(Y_{\tau} - Z(\alpha))^{+}}{\alpha} + Z(\alpha)\right]~f_{g}(\alpha)
~d\alpha\right\}
\end{eqnarray*}
for all \(\delta\geq |Y^*|_{\infty}\).
\end{example}
\subsection{Additive dual representation}
\label{add_dual}
 In this section we generalize the celebrated additive dual representation for optimal stopping problems (see Rogers \cite{rogers2002monte}) to the case of optimal stopping under distorted expectations and law invariant coherent risk measures. The result in \cite{rogers2002monte} was formulated in terms of martingales $M$ with $M_{0} = 0$ satisfying  $\sup_{t\in [0, T]}|M_{t}|\in L^{1}.$ The set of all such adapted martingales will be denoted by $\cM_{0}.$ 
\begin{theorem}
\label{dualrepresentation}
Let 
$V_{t} \doteq \esssup_{\tau\in\cT, \tau\geq t}\ex\left[U_{\tau}\,|\,\cF_{t}\right]$ be the Snell enve\-lope of an integrable right-continuous stochastic process $(U_{t})_{t\in [0,T]}$ adapted to $\OFFP.$ 
If  $\sup_{t\in [0, T]}|U_{t}|\in L^p$  for some $p > 1,$ then 
$$
V_{0} = \sup_{\tau\in \cT}\ex[U_{\tau}] = \inf_{M\in \cM_{0}}\ex\left[\sup_{t\in [0,T]}(U_{t} - M_{t})\right],
$$
where the infimum is attained for $M = M^{*}$ with $M^{*}$ being the martingale part of the Doob-Meyer decomposition of $(V_{t})_{t\in [0,T]}.$ Furtheremore it holds 
$$
\sup_{\tau\in\cT}\ex[U_{\tau}] = \sup_{t\in [0,T]}(U_{t} - M^{*}_{t})\quad\pr-\mbox{a.s.}.
$$
\end{theorem}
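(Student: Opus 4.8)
The plan is to reproduce the classical Rogers additive duality argument in continuous time, extracting the optimal martingale from the Doob--Meyer decomposition of the Snell envelope. First I would prove the easy (weak duality) inequality: fix any $M\in\cM_{0}$ and any $\tau\in\cT$; since $\sup_{t\in[0,T]}|M_{t}|\in L^{1}$ the martingale $M$ is uniformly integrable, so optional stopping at the bounded stopping time $\tau$ gives $\ex[M_{\tau}] = M_{0} = 0$, whence
$$
\ex[U_{\tau}] = \ex[U_{\tau} - M_{\tau}] \leq \ex\Bigl[\sup_{t\in[0,T]}(U_{t} - M_{t})\Bigr].
$$
Taking the supremum over $\tau\in\cT$ and then the infimum over $M\in\cM_{0}$ yields $V_{0} = \sup_{\tau\in\cT}\ex[U_{\tau}]\leq\inf_{M\in\cM_{0}}\ex[\sup_{t}(U_{t} - M_{t})]$.

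Next I would produce a martingale attaining this bound. The Snell envelope $(V_{t})$ is the smallest \cadlag{} supermartingale dominating $(U_{t})$; from $|V_{t}|\leq\ex[\sup_{s}|U_{s}|\mid\cF_{t}]$ and Doob's $L^{p}$ maximal inequality one gets $\sup_{t}|V_{t}|\in L^{p}$, so $(V_{t})$ is of class (D). Its Doob--Meyer decomposition reads $V_{t} = V_{0} + M_{t}^{*} - A_{t}$ with $M^{*}$ a martingale, $M_{0}^{*} = 0$, and $A$ nondecreasing predictable with $A_{0} = 0$. From $A_{T} = V_{0} - V_{T} + M_{T}^{*}$, $\ex[A_{T}] = V_{0} - \ex[V_{T}] < \infty$ and $A_{T}\geq 0$ I would deduce $A_{T}\in L^{1}$, whence $\sup_{t}|M_{t}^{*}|\leq\sup_{t}|V_{t}| + |V_{0}| + A_{T}\in L^{1}$; thus $M^{*}\in\cM_{0}$. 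Writing $U_{t} - M_{t}^{*} = V_{0} + (U_{t} - V_{t}) - A_{t}$ and using $U_{t}\leq V_{t}$ together with $A_{t}\geq 0$ gives $U_{t} - M_{t}^{*}\leq V_{0}$ for every $t$, hence $\ex[\sup_{t}(U_{t} - M_{t}^{*})]\leq V_{0}$. Combined with the weak-duality inequality this forces equality throughout, and the infimum is attained at $M^{*}$.

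Finally, for the pathwise identity I would use that $\cF_{0}$ is trivial, so $V_{0}$ is a deterministic constant; the pointwise bound $\sup_{t}(U_{t} - M_{t}^{*})\leq V_{0}$ together with $\ex[\sup_{t}(U_{t} - M_{t}^{*})] = V_{0}$ then forces $\sup_{t}(U_{t} - M_{t}^{*}) = V_{0}$ $\pr$-a.s. The main obstacle is the verification that $M^{*}\in\cM_{0}$, i.e. $\sup_{t}|M_{t}^{*}|\in L^{1}$: this is precisely the point where the strengthened integrability hypothesis $\sup_{t}|U_{t}|\in L^{p}$ with $p>1$ (rather than merely $p=1$) is needed, entering through Doob's maximal inequality applied to the Snell envelope to control $\sup_{t}|V_{t}|$ and hence $A_{T}$ and $\sup_{t}|M_{t}^{*}|$.
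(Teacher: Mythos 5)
Your proof is correct and coincides with the classical argument of Rogers that the paper itself invokes for this theorem (the paper gives no proof of its own, citing \cite{rogers2002monte}): weak duality via optional sampling of the uniformly integrable martingales in $\cM_{0}$, and attainment via the Doob--Meyer decomposition of the Snell envelope, with the hypothesis $\sup_{t}|U_{t}|\in L^{p}$, $p>1$, entering exactly where you place it, namely through Doob's maximal inequality to guarantee $\sup_{t}|V_{t}|\in L^{1}$ and hence $M^{*}\in\cM_{0}$. The concluding pathwise identity via the pointwise bound $U_{t}-M^{*}_{t}\leq V_{0}$ plus equality of expectations is also the standard completion of that argument.
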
 
Theorem~\ref{new_representation} allows us to extend the additive dual representation to the case of the stopping problem 
\eqref{stoppproblem}.  Define for any $Z\in L^{1}[Y^{*},\mu_{g}]$ the process 
$V^{g,Z}= (V^{g,Z}_{t})_{t\in [0,T]}$ via
 \[
V^{g,Z}_{t} \doteq \esssup_{\tau\in\cT, \tau\geq t}\ex\left[\left.\int_0^1 \left(\frac{(Y_{\tau} - Z(\alpha))^{+}}{\alpha} + Z(\alpha)\right)~\mu_{g}(d\alpha)\,\right|\,\cF_{t}\right].
\] 
The following additive dual representation for the stopping problem \eqref{stoppproblem} holds. 
\begin{theorem}
\label{new_dualrepresentation}
Let $(\Omega,\cF_{t},\left.\pr\right |_{\cF_{t}})$ be atomless with countably generated $\cF_{t}$ for every $t > 0,$ and let $\cZ$ be a dense subset of $\{Z\in L^{1}_{+}(\mu_{g})\mid Z(1) = 0\}$ w.r.t. the $L^{1}-$norm.  If (\ref{Annahmen Young function}) is fulfilled,  $Y^{*}\in {\cal X}_{g}$ and  $Z^{o}\in L^{1}_{+}(\mu_{g})$ with 
$Z^{o}(1) = 0$ as well as  $\ex\big[\big(\int \frac{(Y^{*}- Z^{o})^{+}}{\alpha}~\mu_{g}(d\alpha)\big)^{p}\big] < \infty$ for some real number $p > 1,$ then  
\begin{eqnarray*}
\sup_{\tau\in\cT} \cE^{g}(Y_{\tau}) 
&=&
\inf_{Z\in L^{1}_{+}(\mu_{g}, Z^{o})}\inf\limits_{M\in\cM_{0}} 
\ex\Bigl[~\sup_{t\in [0,T]}(U^{g,Z}_{t} - M_{t})\Bigr]\\
&=&
\inf_{Z\in \cL^{*}_{\cZ}(\mu_{g},Z^{o})}\inf\limits_{M\in\cM_{0}} 
\ex\Bigl[~\sup_{t\in [0,T]}(U^{g,Z}_{t} - M_{t})\Bigr]\\
&=&
\essinf_{Z\in L^{1}_{+}(\mu_{g}, Z^{o})} 
\sup_{t\in [0,T]}(U^{g,Z}_{t} - M_{t}^{g,Z})\quad\pr-\mbox{a.s.}\\
&=&
\essinf_{Z\in \cL^{*}_{\cZ}(\mu_{g}, Z^{o})\atop Z(1) = 0} 
\sup_{t\in [0,T]}(U^{g,Z}_{t} - M_{t}^{g,Z})\quad\pr-\mbox{a.s.}.
\end{eqnarray*}
Here  $M^{g,Z}$ stands for the martingale part of the Doob-Meyer decomposition of 
$V^{g,Z}.$
\end{theorem}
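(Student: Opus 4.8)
The plan is to fix the auxiliary function $Z$ and apply the classical additive dual representation of Theorem \ref{dualrepresentation} to the adapted, right-continuous reward process $(U^{g,Z}_{t})_{t\in[0,T]}$, and then to take the (essential) infimum over $Z$ using the primal representations of Theorem \ref{new_representation}. Concretely, for each fixed $Z$ the process $U^{g,Z}_{t}=\int_{0}^{1}[(Y_{t}-Z(\alpha))^{+}/\alpha+Z(\alpha)]\,\mu_{g}(d\alpha)$ is nonnegative, adapted, and right-continuous (the latter because $x\mapsto\int_{0}^{1}[(x-Z(\alpha))^{+}/\alpha+Z(\alpha)]\,\mu_{g}(d\alpha)$ is continuous and nondecreasing while $(Y_{t})$ is right-continuous), and its Snell envelope is precisely $V^{g,Z}$. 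Thus, once the hypothesis $\sup_{t}|U^{g,Z}_{t}|\in L^{p}$ of Theorem \ref{dualrepresentation} is verified, we obtain for every admissible $Z$ that
\[
\sup_{\tau\in\cT}\ex[U^{g,Z}_{\tau}]=\inf_{M\in\cM_{0}}\ex\Bigl[\sup_{t\in[0,T]}(U^{g,Z}_{t}-M_{t})\Bigr]=\sup_{t\in[0,T]}(U^{g,Z}_{t}-M^{g,Z}_{t})\quad\pr\text{-a.s.}
\]

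The main obstacle is verifying this $L^{p}$ bound simultaneously for all $Z\in L^{1}_{+}(\mu_{g},Z^{o})$, given that it is assumed only for $Z^{o}$. First I would show that the hypothesis $\ex[(\int(Y^{*}-Z^{o})^{+}/\alpha\,\mu_{g}(d\alpha))^{p}]<\infty$ already forces $Y^{*}\in L^{p}$: since $Z^{o}\in L^{1}(\mu_{g})$ is $\mu_{g}$-a.e. finite, one may choose $c_{0}>0$ with $\kappa:=\mu_{g}(\{Z^{o}\le c_{0}\})>0$, and then, using $1/\alpha\ge 1$ on $]0,1]$,
\[
\int_{0}^{1}\frac{(Y^{*}-Z^{o}(\alpha))^{+}}{\alpha}\,\mu_{g}(d\alpha)\ \ge\ \kappa\,(Y^{*}-c_{0})^{+},
\]
whence $Y^{*}\in L^{p}$. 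Next, for arbitrary $Z\in L^{1}_{+}(\mu_{g},Z^{o})$ pick $a\in]0,1[$ with $Z\ge Z^{o}$ on $]0,a]$ and split the integral at $a$: on $]0,a]$ one has $(Y^{*}-Z)^{+}\le(Y^{*}-Z^{o})^{+}$, while on $]a,1]$ one bounds $1/\alpha\le 1/a$ and $(Y^{*}-Z)^{+}\le Y^{*}$, giving $\int_{0}^{1}(Y^{*}-Z)^{+}/\alpha\,\mu_{g}(d\alpha)\le\int_{0}^{1}(Y^{*}-Z^{o})^{+}/\alpha\,\mu_{g}(d\alpha)+Y^{*}/a$. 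Since $\sup_{t}U^{g,Z}_{t}\le\int_{0}^{1}(Y^{*}-Z)^{+}/\alpha\,\mu_{g}(d\alpha)+\int_{0}^{1}Z\,\mu_{g}(d\alpha)$ and the last term is a finite constant, the required $L^{p}$ bound follows from the two previous displays.

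With the per-$Z$ representation in hand, the two $\ex[\cdot]$-equalities follow by taking the infimum over $Z$ and invoking Theorem \ref{new_representation}. Because $\cL^{*}_{\cZ}(\mu_{g},Z^{o})\subseteq L^{1}_{+}(\mu_{g},Z^{o})\subseteq\{Z\in L^{1}_{+}(\mu_{g})\mid Z(1)=0\}$, the monotonicity of infima over nested sets together with the chain \eqref{main_eq_2}--\eqref{main_eq_3} squeezes $\inf_{Z\in L^{1}_{+}(\mu_{g},Z^{o})}\sup_{\tau}\ex[U^{g,Z}_{\tau}]$ and $\inf_{Z\in\cL^{*}_{\cZ}(\mu_{g},Z^{o})}\sup_{\tau}\ex[U^{g,Z}_{\tau}]$ both to the common value $\sup_{\tau}\cE^{g}(Y_{\tau})$; substituting the dual expression for $\sup_{\tau}\ex[U^{g,Z}_{\tau}]$ yields the first two lines. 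For the two $\pr$-a.s. lines, observe that for each fixed $Z$ the a.s. identity above says $\xi_{Z}:=\sup_{t}(U^{g,Z}_{t}-M^{g,Z}_{t})$ equals the deterministic constant $c_{Z}:=\sup_{\tau}\ex[U^{g,Z}_{\tau}]\ge\sup_{\tau}\cE^{g}(Y_{\tau})=:V$ almost surely. Hence $V$ is an a.s. lower bound for the whole family $\{\xi_{Z}\}$, so $V\le\essinf_{Z}\xi_{Z}$; conversely, choosing via Theorem \ref{new_representation} a sequence $(Z_{n})$ in the relevant index set with $c_{Z_{n}}\downarrow V$ gives $\essinf_{Z}\xi_{Z}\le\inf_{n}\xi_{Z_{n}}=\inf_{n}c_{Z_{n}}=V$ a.s. This proves $\essinf_{Z}\xi_{Z}=V$ a.s. over both $L^{1}_{+}(\mu_{g},Z^{o})$ and $\cL^{*}_{\cZ}(\mu_{g},Z^{o})$, which are the third and fourth equalities. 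The delicate points throughout are the uniform $L^{p}$ control just described and the handling of the essential infimum over the uncountable family in the last step.
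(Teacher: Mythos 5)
Your proposal is correct and follows essentially the same route as the paper's proof: fix $Z$, identify $V^{g,Z}$ as the Snell envelope of $(U^{g,Z}_{t})$, apply Theorem \ref{dualrepresentation} after verifying $\sup_{t\in[0,T]}|U^{g,Z}_{t}|\in L^{p}$ by splitting the $\mu_{g}$-integral at the threshold $a$, and then pass to the (essential) infima via Theorem \ref{new_representation} and the inclusions $\cL^{*}_{\cZ}(\mu_{g},Z^{o})\subseteq L^{1}_{+}(\mu_{g},Z^{o})\subseteq\{Z\in L^{1}_{+}(\mu_{g})\mid Z(1)=0\}$. Your writeup is in fact slightly more careful than the paper's: you first derive $Y^{*}\in L^{p}$ from the hypothesis on $Z^{o}$ and then conclude by Minkowski's inequality, whereas the paper's displayed bound $\frac{1}{a}\ex[Y^{*}]$ for the $]a,1]$ piece (together with its implicit use of $(A+B)^{p}\le A^{p}+B^{p}$) only becomes rigorous after exactly the repair you supply.
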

The proof of Theorem \ref{new_dualrepresentation} will be found in Subsection \ref{proofdualrepresentation}. 
\begin{remark}
As a special choice for $\cZ$ in Theorem \ref{new_dualrepresentation} we may select the set of all mappings $\sum_{i=1}^{n-1}b_{i} B_{i,n}|_{]0,1]}$ with $n\in\N$ and $b_{0},\dots,b_{n-1}\geq 0,$ where $B_{i,n}$ is defined as in \eqref{BernsteinMononom}. 
\end{remark}
\medskip

If $g'(0) < \infty$, then we may choose $Z^{o} \equiv 0$ in Theorem \ref{new_dualrepresentation}. The application of Corollary \ref{simplifiedrepresentation} together with Theorem \ref{new_dualrepresentation} provides us with the following additive dual representation of the stopping problem \eqref{stoppproblem}. 
\begin{corollary}
\label{dualrepresentation_utility}
Let the assumption on $g$ and $(\cF_{t})$ of 
Theorem \ref{new_representation} be fulfilled. Furthermore, let $\cZ$ be a dense subset of $\{Z\in L^{1}_{+}(\mu_{g})\mid Z(1) = 0 \}$ w.r.t. the $L^{1}-$norm. If $g'(0) < \infty,$ and if $\sup_{t\in [0,T]}|Y_{t}|^{p}$ is $\pr-$integrable for some $p > 1,$ then the following dual representation holds 
\begin{eqnarray*}
\sup_{\tau\in\cT} \cE^{g}(Y_{\tau}) 
&=&
\inf_{Z\in\cZ}\inf\limits_{M\in\cM_{0}} 
\ex\big[\sup_{t\in [0,T]}( U^{g,Z}_{t} - M_{t})
\big]\\
&=&
\inf_{Z\in\cZ}\ex\big[\sup_{t\in [0,T]}\big(U^{g,Z}_{t} - M^{*,g,Z}_{t}\big)\big]\\
&=& 
\essinf_{Z\in\cZ}\sup_{t\in [0,T]}\big(U^{g,Z}_{t} - M^{*,g,Z}_{t}\big)
\quad \pr-\mbox{a.s.}.
\end{eqnarray*}
Here 
$M^{*,g,Z}$ stands for the martingale part of the Doob-Meyer decomposition of 
$V^{g,Z}.$
\end{corollary}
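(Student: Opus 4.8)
The plan is to obtain the statement as the specialization of Theorem~\ref{new_dualrepresentation} to the choice $Z^{o}\equiv 0$, whose admissibility is precisely what the hypothesis $g'(0) < \infty$ secures (this is the same device that produces Corollary~\ref{simplifiedrepresentation} from Theorem~\ref{new_representation}). First I would record that $g'(0) < \infty$ is equivalent to $\int_{]0,1]} 1/\alpha~\mu_{g}(d\alpha) < \infty$, and in fact $\int_{]0,1]} 1/\alpha~\mu_{g}(d\alpha) = g'(0)$, by monotone convergence in the identity $g'(x) = \int_{x}^{1} 1/u~\mu_{g}(du)$ (cf. \cite[Lemma 4.69]{FoellmerSchied2011}). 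Since $Z^{o}\equiv 0$ trivially lies in $L^{1}_{+}(\mu_{g})$ with $Z^{o}(1)=0$, it is the natural candidate to feed into Theorem~\ref{new_dualrepresentation}.

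Next I would verify the standing hypotheses of Theorem~\ref{new_dualrepresentation} for this $Z^{o}$. For the moment condition, the above gives $\int (Y^{*}-Z^{o})^{+}/\alpha~\mu_{g}(d\alpha) = Y^{*}\int 1/\alpha~\mu_{g}(d\alpha) = g'(0)\,Y^{*}$, whence $\ex[(\int (Y^{*}-Z^{o})^{+}/\alpha~\mu_{g}(d\alpha))^{p}] = g'(0)^{p}\,\ex[(Y^{*})^{p}] = g'(0)^{p}\,\ex[\sup_{t\in[0,T]}|Y_{t}|^{p}] < \infty$, as required. For the missing hypothesis $Y^{*}\in\cX_{g}$, I would use the concavity bound $g(u)\leq g'(0)\,u$ (valid since $g(0)=0$ and $g$ is concave) to estimate $\int_{0}^{\infty} g(1 - F_{Y^{*}}(x))~dx \leq g'(0)\int_{0}^{\infty}(1 - F_{Y^{*}}(x))~dx = g'(0)\,\ex[Y^{*}] < \infty$, the last integral being finite because $\ex[(Y^{*})^{p}] < \infty$ with $p > 1$. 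Hence Theorem~\ref{new_dualrepresentation} applies with $Z^{o}\equiv 0$.

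It then remains to simplify the two index sets that appear there. Because every $Z\in L^{1}_{+}(\mu_{g})$ is nonnegative, the requirement $\inf_{\alpha\in]0,a]}(Z(\alpha)-Z^{o}(\alpha))\geq 0$ is automatic when $Z^{o}\equiv 0$, so $L^{1}_{+}(\mu_{g},Z^{o}) = \{Z\in L^{1}_{+}(\mu_{g})\mid Z(1)=0\}$; moreover $\eins_{]0,a]}\cdot Z^{o}+Z = Z$, so $\cL^{*}_{\cZ}(\mu_{g},Z^{o}) = \cZ$. Substituting these identifications into the second and fourth displays of Theorem~\ref{new_dualrepresentation} yields, respectively, the first equality $\sup_{\tau\in\cT}\cE^{g}(Y_{\tau}) = \inf_{Z\in\cZ}\inf_{M\in\cM_{0}}\ex[\sup_{t\in[0,T]}(U^{g,Z}_{t}-M_{t})]$ and the third (almost sure) equality, the process denoted $M^{g,Z}$ in Theorem~\ref{new_dualrepresentation} being exactly the $M^{*,g,Z}$ of the present statement, namely the martingale part of the Doob--Meyer decomposition of $V^{g,Z}$.

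Finally, for the middle equality I would invoke the attainment assertion of Theorem~\ref{dualrepresentation} separately for each fixed $Z\in\cZ$. This is legitimate because $0\leq U^{g,Z}_{t}\leq g'(0)\,Y^{*} + \|Z\|_{L^{1}(\mu_{g})}$ uniformly in $t$, so $\sup_{t\in[0,T]}|U^{g,Z}_{t}|\in L^{p}$, which is exactly the integrability needed to run the additive dual representation; Theorem~\ref{dualrepresentation} then gives $\inf_{M\in\cM_{0}}\ex[\sup_{t}(U^{g,Z}_{t}-M_{t})] = \ex[\sup_{t}(U^{g,Z}_{t}-M^{*,g,Z}_{t})]$, and taking the infimum over $Z\in\cZ$ bridges the first and second equalities. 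The only point demanding genuine care, and the place where $g'(0) < \infty$ is indispensable, is this uniform $L^{p}$-bound on $U^{g,Z}$ via $\int 1/\alpha~\mu_{g}(d\alpha) = g'(0)$; everything else is bookkeeping on the index sets inherited from Theorem~\ref{new_dualrepresentation}.
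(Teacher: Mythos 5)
Your proposal is correct and follows essentially the same route as the paper: the paper's proof is precisely to take $Z^{o}\equiv 0$ in Theorem~\ref{new_dualrepresentation} (admissible because $g'(0)<\infty$ gives $\int 1/\alpha~\mu_{g}(d\alpha)=g'(0)<\infty$) and combine it with Corollary~\ref{simplifiedrepresentation}, whose role you replicate by carrying out the index-set identifications $L^{1}_{+}(\mu_{g},Z^{o})=\{Z\in L^{1}_{+}(\mu_{g})\mid Z(1)=0\}$ and $\cL^{*}_{\cZ}(\mu_{g},Z^{o})=\cZ$ directly. You in fact supply details the paper leaves implicit — the verification that $Y^{*}\in\cX_{g}$ via $g(u)\leq g'(0)u$, the $L^{p}$-bound $\sup_{t}|U^{g,Z}_{t}|\leq g'(0)Y^{*}+\|Z\|_{L^{1}(\mu_{g})}$ needed to invoke the attainment part of Theorem~\ref{dualrepresentation} for the middle equality — so the argument is complete as written.
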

For the case of law invariant coherent risk measures of the form \eqref{cEG} we have the following
\begin{corollary}
Let \(\cE\) be a coherent law invariant risk measure satisfying $\lim_{\alpha\to 0+}g_{\cE}(\alpha) = 0$, and let us fix any representation of the form  \eqref{cEG}. If \(\sup_{\alpha\in ]0,1]} g_{\cE}(\alpha)/\alpha<\infty\)  and if for any \(g\in \cG,\) the set \(\cZ\) is dense in $\{Z\in L^{1}_{+}(\mu_{g})\mid Z(1) = 0\},$ then 
$$
\sup_{\tau\in\cT} \cE(Y_{\tau}) 
=
\sup_{g\in \cG}\inf_{Z\in\cZ}\ex\big[\sup_{t\in [0,T]}\big(U^{g,Z}_{t} - M^{*,g,Z}_{t}\big)\big]
=
\esssup_{g\in \cG}\essinf_{Z\in\cZ}\sup_{t\in [0,T]}\big(U^{g,Z}_{t} - M^{*,g,Z}_{t}\big)\quad \pr-\mbox{a.s.}.
$$
\end{corollary}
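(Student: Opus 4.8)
The plan is to obtain the statement by superposing the single-distortion dual representation of Corollary~\ref{dualrepresentation_utility} over the family $\cG$ supplied by the Kusuoka-type representation \eqref{cEG}, namely $\cE(X)=\sup_{g\in\cG}\cE^{g}(X)$. Thanks to the hypothesis $\lim_{\alpha\to0+}g_{\cE}(\alpha)=0$, the family $\cG$ may be taken to consist of continuous concave distortion functions (as recalled before Corollary~\ref{simplifiedrepresentation_cohrisk}), so that \eqref{Annahmen Young function} holds for every $g\in\cG$. The one preliminary point before Corollary~\ref{dualrepresentation_utility} becomes available is that it requires $g'(0)<\infty$; so first I would show that $\sup_{\alpha\in]0,1]}g_{\cE}(\alpha)/\alpha<\infty$ forces $g'(0)<\infty$ uniformly over $g\in\cG$. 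After that the deterministic equality follows from an elementary interchange of two suprema, and the only genuinely delicate point is the passage to the almost sure identity, where the outer optimisation must be read as an essential supremum over the possibly uncountable index set $\cG$.

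For the preliminary step I would identify $g_{\cE}$ with the upper envelope of $\cG$. Evaluating the distorted expectation on the Bernoulli variable $F^{\leftarrow}_{B(\alpha)}(U)$ gives $\cE^{g}\bigl(F^{\leftarrow}_{B(\alpha)}(U)\bigr)=g(\alpha)$ for every continuous concave $g$, so \eqref{cEG} yields $g_{\cE}(\alpha)=\sup_{g\in\cG}g(\alpha)$ for all $\alpha\in]0,1]$. Since each $g\in\cG$ is continuous, concave and vanishes at $0$, the chord slope $\alpha\mapsto g(\alpha)/\alpha$ is nonincreasing with $\sup_{\alpha\in]0,1]}g(\alpha)/\alpha=g'(0)$; interchanging suprema then gives
$$
\sup_{g\in\cG}g'(0)=\sup_{g\in\cG}\sup_{\alpha\in]0,1]}\frac{g(\alpha)}{\alpha}=\sup_{\alpha\in]0,1]}\frac{g_{\cE}(\alpha)}{\alpha}<\infty .
$$
In particular $g'(0)<\infty$ for every $g\in\cG$. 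The same bound controls the integrability required in Corollary~\ref{dualrepresentation_utility} uniformly in $g$: with $Z\equiv0$ one has $\sup_{t\in[0,T]}U^{g,0}_{t}=g'(0)\,Y^{*}\le\bigl(\sup_{\alpha}g_{\cE}(\alpha)/\alpha\bigr)Y^{*}$, so that the standing integrability assumption $\sup_{t\in[0,T]}|Y_{t}|^{p}\in L^{1}$ for some $p>1$ supplies the $L^{p}$-hypothesis simultaneously for all $g\in\cG$.

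With $g'(0)<\infty$ in hand, Corollary~\ref{dualrepresentation_utility} applies to each fixed $g\in\cG$ with the common dense set $\cZ$, giving
$$
\sup_{\tau\in\cT}\cE^{g}(Y_{\tau})=\inf_{Z\in\cZ}\ex\Bigl[\sup_{t\in[0,T]}\bigl(U^{g,Z}_{t}-M^{*,g,Z}_{t}\bigr)\Bigr]=\essinf_{Z\in\cZ}\sup_{t\in[0,T]}\bigl(U^{g,Z}_{t}-M^{*,g,Z}_{t}\bigr)\quad\pr-\mbox{a.s.}
$$
For the first, deterministic equality of the corollary I would write $\cE(Y_{\tau})=\sup_{g\in\cG}\cE^{g}(Y_{\tau})$ from \eqref{cEG}, use the always-valid interchange $\sup_{\tau\in\cT}\sup_{g\in\cG}=\sup_{g\in\cG}\sup_{\tau\in\cT}$, and substitute the $\inf_{Z}\ex[\cdots]$ expression above, obtaining $\sup_{\tau\in\cT}\cE(Y_{\tau})=\sup_{g\in\cG}\inf_{Z\in\cZ}\ex[\sup_{t}(U^{g,Z}_{t}-M^{*,g,Z}_{t})]$.

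The main obstacle is the almost sure identity. Here the decisive observation is that, for each $g$, the inner essential infimum $\essinf_{Z\in\cZ}\sup_{t}(U^{g,Z}_{t}-M^{*,g,Z}_{t})$ equals the deterministic constant $c_{g}\doteq\sup_{\tau\in\cT}\cE^{g}(Y_{\tau})$ almost surely. Since the essential supremum of a family depends only on its members up to null sets, replacing each member by its a.s.\ value leaves it unchanged, so $\esssup_{g\in\cG}\essinf_{Z\in\cZ}\sup_{t}(\cdots)=\esssup_{g\in\cG}c_{g}$; and the essential supremum of a family of constants coincides with their ordinary supremum, because any random variable dominating each $c_{g}$ almost surely must dominate $\sup_{g}c_{g}$. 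Hence $\esssup_{g\in\cG}\essinf_{Z\in\cZ}\sup_{t}(\cdots)=\sup_{g\in\cG}c_{g}=\sup_{g\in\cG}\sup_{\tau\in\cT}\cE^{g}(Y_{\tau})=\sup_{\tau\in\cT}\cE(Y_{\tau})$, which is the asserted almost sure representation and matches the deterministic value found above. The point to watch is precisely this uncountable essential supremum: the argument goes through only because each inner essential infimum degenerates to a constant, so that no measurable-selection or joint-measurability issue over $\cG$ can arise.
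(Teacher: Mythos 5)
Your proposal is correct and follows essentially the route the paper intends: the paper states this corollary as an immediate consequence of Corollary~\ref{dualrepresentation_utility} applied to each $g\in\cG$ in the Kusuoka-type representation \eqref{cEG}, which is exactly your superposition argument. The details you supply — that $g_{\cE}=\sup_{g\in\cG}g$ pointwise forces $g'(0)\leq\sup_{\alpha}g_{\cE}(\alpha)/\alpha<\infty$ for every $g\in\cG$, and that the essential supremum over the uncountable family $\cG$ is harmless because each inner essential infimum is a.s.\ a constant — are precisely the steps the paper leaves implicit, and your treatment of them is sound.
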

\begin{example}[Optimal stopping under absolute semideviation]
Let the assumptions on $(\cF_{t})$ of Theorem \ref{new_representation} be fulfilled, and let the random variable $\sup_{t\in [0,T]} Y_{t}^{p}$ be $\pr-$integrable for some $p > 1.$ In view of \eqref{as_primal}, the dual representation for the absolute semideviation reads as follows
\begin{eqnarray}
\nonumber
&&
\sup_{\tau\in\cT}\Bigl[\E[Y_{\tau}]+c\,\E\bigl\{[Y_{\tau}-\E(Y_{\tau})]_{+}\bigr\}\Bigr]\\
&=&
\nonumber
\sup_{\varkappa\in ]0,1[} \inf_{x\geq 0}\inf_{M\in \cM_{0}} \mathrm{E}\Big[\sup_{t\in [0,T]}\left\{c(Y_{t}-x)^{+}+c\varkappa x+(1-c\varkappa)Y_{t}-M_t\right\}\Big]\\
&=&
\nonumber
\sup_{\varkappa\in ]0,1[} \inf_{x\geq 0} \mathrm{E}\Big[\sup_{t\in [0,T]}\left\{c(Y_{t}-x)^{+}+c\varkappa x+(1-c\varkappa)Y_{t}-M^{*,\varkappa,x}_t\right\}\Big]\\
&=&
\nonumber
\esssup_{\varkappa\in ]0,1[} \essinf_{x\geq 0}\sup_{t\in [0,T]}\left\{c(Y_{t}-x)^{+}+c\varkappa x+(1-c\varkappa)Y_{t}-M^{*,\varkappa,x}_t\right\}\quad \pr-\mbox{a.s.}.
\end{eqnarray}
Here 
$M^{*,\varkappa, x}$ denotes the martingale part of the Doob-Meyer decomposition of 
the Snell-envelope of the process $(c(Y_{t}-x)^{+}+c\varkappa x+(1-c\varkappa)Y_{t})_{t\in [0,T]}.$
\end{example}
\par
\begin{example}[Optimal stopping under expectiles]
Let the assumptions on $(\cF_{t})$ of Theorem \ref{new_representation} be fulfilled, and let $\sup_{t\in [0,T]} Y_{t}\in L^{p}\OFP$ for some $p\in ]1,\infty[.$ Then for $\alpha\in ]1/2,1[$ we may draw on \eqref{ExpectilePrimal1} and \eqref{ExpectilePrimal1} to obtain the following dual representation of the optimal stopping problem under the 
$\alpha-$expectile
\begin{eqnarray}
\sup_{\tau\in\cT}\cE_{\alpha}[Y_{\tau}]=
\inf_{x\geq 0}\sup_{\gamma\in [(1-\alpha)/\alpha,1]}
\inf_{M\in \cM_{0}}\ex\Bigl[~\sup_{t\in [0,T]}\Bigl(\frac{\gamma (2\alpha - 1)}{1-\alpha}\,
(Y_{t}-x)^{+}\, +\, \gamma Y_{\tau}\,+\, (1-\gamma) x- M_{t}\Bigr)\Bigr].
\end{eqnarray}

\end{example}

\section{Numerical example}
\label{num}
In this section we illustrate how our results can be applied  to pricing Bermudan-type  options under absolute semideviation risk measure.  
Specifically, we consider the model with $d$ identically distributed assets, where each underlying has dividend yield $\delta $.
The  dynamic of assets is given by
\begin{equation}
\label{Xeq}
\frac{dX_{t}^{i}}{X_{t}^{i}}=(r-\delta )dt+\sigma dW_{t}^{i},\quad i=1,\ldots,d,
\end{equation}%
where $W_{t}^{i},\,i=1,\ldots,d$, are independent one-dimensional Brownian
motions and $r,\delta ,\sigma $ are constants. At any time $t\in
\{t_{0},\ldots,t_{J}\}$ the holder of the option may exercise it and
receive the payoff
\begin{equation*}
Y_t=G(X_{t})=e^{-rt}(\max (X_{t}^{1},...,X_{t}^{d})-K)^{+}.
\end{equation*}%
Suppose  that the seller of the Bermuda option would like to protect himself against  a downside risk, i.e. against the event
\(Y_t>\E(Y_{\tau}),\) then a risk-adjusted price of the corresponding Bermudan option  can be defined  as
\begin{eqnarray}
\label{opt_stop_ex}
V=\sup_{\tau\in \cT}\left[\E[Y_{\tau}]+c\,\E\bigl\{[Y_{\tau}-\E(Y_{\tau})]_{+}\bigr\}\right],
\end{eqnarray}
where \(\cT\) is a set \(\cF\)-measurable stopping times taking values in \(\{t_0,\ldots,t_J\}.\)
Due to Example~\ref{semidev},  one can use the standard methods based on dynamic programming principle to 
solve \eqref{opt_stop_ex}.
Indeed, for any fixed \(\varkappa\) and \(x,\) the optimal value of the stopping problem 
\begin{eqnarray*}
V= \sup_{\varkappa\in ]0,1[} \inf_{x\geq 0} \sup_{\tau\in \cT} \mathrm{E}\left[c(Y_{\tau}-x)^{+}+c\varkappa x+(1-c\varkappa)Y_{\tau}\right]
\\
=\sup_{\varkappa\in ]0,1[} \inf_{x\geq 0} \sup_{\tau\in \cT} \mathrm{E}\left[\widetilde G_{x,\varkappa}(X_\tau)\right]
\end{eqnarray*}
can be, for example, numerically approximated via the well known regression methods like Longstaff-Schwartz method (see Section 7 in \cite{glasserman2004monte}). In this way one can get a (suboptimal) stopping rule 
\[
\tau_{x,\varkappa}:=\inf\Bigl\{0\leq j\leq J: \widetilde G_{x,\varkappa}(X_{t_j})\geq \widehat C_{j,x,\varkappa}(X_{t_j})\Bigr\},
\] 
where \(\widehat C_{1,x,\varkappa},\ldots,\widehat C_{J, x,\varkappa}\) are continuation values estimates. Then
\begin{eqnarray}
\label{low_bound}
V^l_N:=\sup_{\varkappa\in ]0,1[}\inf_{x\geq 0}\left\{\frac{1}{N}\sum_{n=1}^N \widetilde G_{x,\varkappa}\Bigl(X^{(n)}_{\tau_{x,\varkappa}^{(n)}}\Bigr)\right\}
\end{eqnarray}
is a low-biased estimate for \(V\).  Note that the infimum in \eqref{low_bound} can be easily computed using a simple search algorithm. An upper-biased estimate can be constructed using the well known Andersen-Broadie dual approach (see \cite{andersen2004primal}). For any fixed \(\varkappa\in  ]0,1[ \) and \(x\geq 0,\) this approach would give us a discrete time martingale \((M_j^{x,\varkappa})_{j=0,\ldots,J}\) which in turn can be used to build an upper-biased estimate via 
\begin{eqnarray}
\label{upper_bound}
V^u_N:=\sup_{\varkappa\in ]0,1[}\inf_{x\geq 0}\left\{\frac{1}{N}\sum_{n=1}^N\left[\sup_{j=0,\ldots,J}\left(\widetilde G_{x,\varkappa}(X_{t_j}) - M^{x,\varkappa, (n)}_{j}\right)\right]\right\}.
\end{eqnarray}
Note that \eqref{upper_bound} remains upper biased even if we replace the infimum of the objective function in \eqref{upper_bound} by its value at a fixed point \(x.\)
In Table~\ref{max_call_2d} we present the bounds \(V^l_N\) and \(V^u_N\) together with their standard deviations for different values of \(c.\) As to implementation details, we used \(12\) basis functions for regression (see pp. 462-463 in   \cite{glasserman2004monte}) and \(10^4\) training paths to compute 
\(\widehat C_{1,x,\varkappa},\ldots, \widehat C_{J,x,\varkappa}.\) In the dual approach of Andersen and Broadie, \(10^3\) inner simulations were done to approximate \(M^{\varkappa,x}.\) In both cases we simulated \(N=10^4\) testing paths  to compute the final estimates. From Table~\ref{max_call_2d} one can see that the price of the Bermudan option increases with \(c\) reflecting the fact that the downside risk becomes a higher weight  as \(c\) increases.
\begin{table}
\label{max_call_2d}
\caption{Bounds (with standard deviations) for \(2\)-dimensional Bermudan max-call with parameters $K=100,\,r=0.05$, $\sigma=0.2,$ $\delta=0.1$ under semideviation risk measure with parameter \(c\)}
\begin{center}
\begin{tabular}{|c|c|c|}
\hline
\(c\)  & Lower bound \(V^l_N\) & Upper bound \(V^u_N\)\\
\hline
\hline
0 & 7.94(0.116) & 8.12 (0.208) \\
0.5 & 10.31 (0.129)  & 10.63 (0.250) \\
1 & 13.27 (0.174)  & 13.81 (0.271) \\
1.5 & 15.43 (0.193)   & 16.01 (0.302) \\
\hline
\end{tabular}
\end{center}
\end{table}

\section{Main ideas of the proofs}
\label{optimalrandomizedstoppingtimes}

In order to proof Theorem \ref{new_representation}, we shall proceed as follows. 
First, by Lemma \ref{optimizedcertaintyequivalent} (cf. Appendix \ref{AppendixAA}), we obtain 
\begin{equation}
\label{Hilfsstoppproblem}
\sup_{\tau\in\cT} \cE^{g}(Y_{\tau})
= 
\sup_{\tau\in\cT}\inf_{Z\in L^{1}_{+}(\mu_{g}) \atop Z(1) = 0} 
\ex\left[\int\left[\frac{(Y_{\tau} - Z(\alpha))^{+}}{\alpha} + Z(\alpha)\right]~\mu_{g}(d\alpha)\right],
\end{equation}
The crucial part of proof of Theorem \ref{new_representation} is to show 
\begin{eqnarray}
\label{minimaxrelationship} \notag
&&
\sup_{\tau\in\cT}\inf_{Z\in L^{1}_{+}(\mu_{g}) \atop Z(1) = 0} 
\ex\left[\int\left[\frac{(Y_{\tau} - Z(\alpha))^{+}}{\alpha} + Z(\alpha)\right]~\mu_{g}(d\alpha)\right]\\
&=& 
\inf_{Z\in L^{1}_{+}(\mu_{g}) \atop Z(1) = 0}\sup_{\tau\in\cT} 
\ex\left[\int\left[\frac{(Y_{\tau} - Z(\alpha))^{+}}{\alpha} + Z(\alpha)\right]~\mu_{g}(d\alpha)\right].
\end{eqnarray}
Using Tonelli's theorem, we obtain for any $\tau\in\cT$ and every $Z\in L^{1}_{+}(\mu_{g}),$ 
\begin{multline}
\label{DarstellungZielfunktion} 
\ex\left[\int\left[\frac{(X - Z(\alpha))^{+}}{\alpha} + Z(\alpha)\right]~\mu_{g}(d\alpha)\right]\\ 
= 
\int\ex\left[\frac{(X - Z(\alpha))^{+}}{\alpha} + Z(\alpha)\right]~\mu_{g}(d\alpha)\\
= 
\int\left(\int_{Z(\alpha)}^{\infty}\frac{(1 - F_{Y_{\tau}}(x))}{\alpha}~dx + Z(\alpha)\right)~\mu_{g}(d\alpha). 
\end{multline}
Since the set $\mathbb{F} \doteq \{F_{Y_{\tau}}\mid \tau\in\cT\}$ of distribution functions 
$F_{Y_{\tau}}$ of $Y_{\tau}$ is not, in general, a convex subset of the set of distribution functions on $\R,$ we can not apply the known minimax results. The idea is to first establish (\ref{minimaxrelationship}) for the larger class of randomized stopping times, and then to show  that the optimal value coincides with the optimal value 
\(\sup_{\tau\in\cT} \cE^{g}(Y_{\tau}).\)

Let us recall the notion of randomized stopping times. By definition 
(see e.g. \cite{edgar1982compactness}), a randomized stopping time w.r.t. $\OFFP$ is 
a mapping $\tau^{r}:\Omega\times [0,1]\rightarrow [0,\infty]$ which is nondecreasing and left-continuous in the second component such that $\tau^{r}(\cdot,u)$ is a stopping time w.r.t. $(\cF_{t})_{t\in [0,T]}$ for any $u\in [0,1].$ Notice that any randomized stopping time $\taur$ is also an ordinary stopping time w.r.t. the enlarged filtered probability space 
$\big(\Omega\times [0,1],\cF\otimes \cB([0,1]),
\big(\cF_{t}\otimes \cB([0,1])\big)_{t\in [0,T]},\pr\otimes \pr^{U}\big).$ Here 
$\pr^{U}$ denotes the uniform distribution on $[0,1],$ defined on $\cB([0,1]),$ the usual Borel $\sigma-$algebra on $[0,1].$ We shall call a randomized stopping time $\taur$ to be degenerated if $\taur(\omega,\cdot)$ is constant for every $\omega\in\Omega.$ There is an obvious one-to-one correspondence between stopping times and degenerated randomized stopping times.
\par
Consider the stochastic process $(Y_{t}^{r})_{t\geq 0},$ defined by 
$$
Y_{t}^{r} :\Omega\times [0,1]\rightarrow\R,\, (\omega,u)\mapsto Y_{t}(\omega).
$$
which is adapted w.r.t. the enlarged filtered probability space. 
Denoting by $\cTr$ the set of all randomized stopping times $\tau^{r}\leq T,$ we shall study the following new stopping problem
\begin{equation}
\label{randomstop}
\mbox{maximize } 
\cE^{g}(Y^{r}_{\taur})
~\mbox{over}~\tau^r\in\cTr.
\end{equation}
Obviously, $\cE^{g}(Y_{\tau}) = \cE^{g}(Y^{r}_{\taur})$ 
is valid for every stopping time $\tau\in\cT,$ where $\taur\in\cTr$ is the corresponding degenerated randomized stopping time such that \(\taur(\omega,u)=\tau(\omega),\) \(u\in [0,1].\) Thus, in general the optimal value of the stopping problem (\ref{randomstop}) is at least as large as the one of the original stopping problem 
(\ref{stoppproblem}) due to (\ref{Hilfsstoppproblem}). One reason to consider the new stopping problem \eqref{randomstop} is that it  has a solution under fairly general conditions.
\begin{proposition}
\label{solution}
Let \eqref{Annahmen Young function} be fulfilled, and let $\sup_{t\in [0,T]}Y_{t}\in {\cal X}_{g}.$ If $(Y_{t})_{t\in [0,T]}$ is quasi-left-continuous and if $\cF_{T}$ is countably generated, then there exists a randomized stopping time $\taur_*\in\cTr$ such that
$$
\cE^{g}(Y^{r}_{\taur_{*}})
= 
\sup_{\taur\in \cTr}\cE^{g}(Y^{r}_{\taur}).
$$
\end{proposition}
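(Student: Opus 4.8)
The plan is to obtain $\taur_{*}$ by a compactness-and-semicontinuity argument: I would topologize $\cTr$ so that it becomes compact, prove that the functional $\taur\mapsto\cE^{g}(Y^{r}_{\taur})$ is upper semicontinuous (in fact continuous) for this topology, and then invoke the Weierstra{\ss} extreme value theorem. Concretely, I identify each $\taur\in\cTr$ with the adapted, right-continuous, increasing process $A^{\taur}_{t}(\omega)\doteq\pr^{U}(\{u\in[0,1]\mid\taur(\omega,u)\le t\})$ on the enlarged space $(\Omega\times[0,1],\cF\otimes\cB([0,1]),\oP)$, where $\oP=\pr\otimes\pr^{U}$, so that $\ex_{\oP}[f(Y^{r}_{\taur})]=\ex[\int_{[0,T]}f(Y_{t})\,dA^{\taur}_{t}]$, and I equip $\cTr$ with the corresponding weak (Baxter--Chacon) topology, i.e. the coarsest one making $\taur\mapsto\ex[\int_{[0,T]}X_{t}\,dA^{\taur}_{t}]$ continuous for bounded optional integrands $X$.

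The first ingredient is compactness of $\cTr$ in this topology, which is precisely the content of the Edgar--Millet--Sucheston theory of randomized stopping times (cf. \cite{edgar1982compactness}); here the hypothesis that $\cF_{T}$ be countably generated is what guarantees metrizability of the relevant topology, so that compactness can be used through convergent sequences. The second ingredient is continuity of the objective, which I would establish in two steps. First, if $\taur_{n}\to\taur$ in the Baxter--Chacon topology, then quasi-left-continuity of $(Y_{t})$ allows the test process $f(Y_{t})$ (for bounded continuous $f$) to be controlled at accumulation and jump times, yielding $\ex_{\oP}[f(Y^{r}_{\taur_{n}})]\to\ex_{\oP}[f(Y^{r}_{\taur})]$, i.e. convergence in law of $Y^{r}_{\taur_{n}}$ towards $Y^{r}_{\taur}$. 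Second, since $0\le Y^{r}_{\taur}\le Y^{*}$ pointwise and $Y^{*}\in\cX_{g}$, the bound $g(1-F_{Y^{r}_{\taur_{n}}}(x))\le g(1-F_{Y^{*}}(x))$ together with $\int_{0}^{\infty}g(1-F_{Y^{*}}(x))\,dx=\cE^{g}(Y^{*})<\infty$ permits a dominated-convergence argument for the Choquet integral $\cE^{g}(X)=\int_{0}^{\infty}g(1-F_{X}(x))\,dx$, giving $\cE^{g}(Y^{r}_{\taur_{n}})\to\cE^{g}(Y^{r}_{\taur})$. Combining compactness with this continuity produces a maximizer $\taur_{*}\in\cTr$.

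Equivalently, one may exploit the variational representation from Lemma~\ref{optimizedcertaintyequivalent}, which on the enlarged space reads $\cE^{g}(Y^{r}_{\taur})=\inf_{Z}\ex_{\oP}[U^{g,Z}_{\taur}]$ with the infimum over $Z\in L^{1}_{+}(\mu_{g})$ with $Z(1)=0$; since each map $\taur\mapsto\ex_{\oP}[U^{g,Z}_{\taur}]$ is continuous for $Z\in L^{1}[Y^{*},\mu_{g}]$ (finiteness being guaranteed by $Y^{r}_{\taur}\le Y^{*}$ and \eqref{wichtige Integrierbarkeit}), the objective is an infimum of continuous maps and therefore upper semicontinuous, which already suffices for the existence of a maximizer over the compact set $\cTr$. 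The step I expect to be the main obstacle is the continuity of the evaluation functional, namely transferring Baxter--Chacon convergence of $\taur_{n}$ into convergence in law of $Y^{r}_{\taur_{n}}$: this is exactly where quasi-left-continuity of $(Y_{t})$ is indispensable, as it rules out mass escaping at accumulation times, and it rests on the finer parts of the randomized-stopping-time calculus of \cite{edgar1982compactness}. By contrast, the integrability and domination bookkeeping (including uniform integrability in the $\alpha$-variable through the $1/\alpha$ factor) is routine once $Y^{*}\in\cX_{g}$ is in force.
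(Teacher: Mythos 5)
Your proposal is correct and follows essentially the same route as the paper: a maximizing-sequence/compactness argument in the Baxter--Chacon topology (metrizable since $\cF_{T}$ is countably generated), with quasi-left-continuity invoked via \cite[Theorem 4.7]{edgar1982compactness} to pass from Baxter--Chacon convergence to convergence in law of $Y^{r}_{\taur_{n}}$, followed by dominated convergence for the Choquet integral using the bound $g(1-F_{Y^{r}_{\taur}}(x))\le g(1-F_{Y^{*}}(x))$ and $Y^{*}\in\cX_{g}$. The only cosmetic difference is that the paper first extends the filtration and the process to the time horizon $[0,\infty]$ so as to apply the cited compactness and convergence theorems verbatim, and then truncates the limiting randomized stopping time at $T$, whereas you work directly on $\cTr$; this does not change the substance of the argument.
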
 
The proof of Proposition \ref{solution} is subject of Subsection \ref{proof of solution}.
Moreover, 
the following important minimax result for the stopping problem 
(\ref{randomstop}) holds.
\begin{proposition}
\label{minimax}
If (\ref{Annahmen Young function}) is fulfilled, and if $\sup_{t\in [0,T]}Y_{t}\in {\cal X}^{g},$ then 
\begin{multline*}
\sup_{\taur\in\cTr}\inf_{Z\in L^{1}_{+}(\mu_{g}) \atop Z(1) = 0} 
\ex\left[\int\left[\frac{(Y^{r}_{\taur} - Z(\alpha))^{+}}{\alpha} + Z(\alpha)\right]~\mu_{g}(d\alpha)\right]\\
= 
\inf_{Z\in L^{1}_{+}(\mu_{g}) \atop Z(1) = 0}\sup_{\tau\in\cTr} 
\ex\left[\int\left[\frac{(Y^{r}_{\taur} - Z(\alpha))^{+}}{\alpha} + Z(\alpha)\right]~\mu_{g}(d\alpha)\right].
\end{multline*}
Moreover, if $(Y_{t})_{t\in [0,T]}$ is quasi-left-continuous and if $\cF_{T}$ is countably generated, then there exist $\tau^{r*}\in\cT^{r}$ and 
$Z^{*}\in L^{1}_{+}(\mu_{g})$ with $Z(1) = 0$ such that 
\begin{multline*}
\ex\left[\int\left[\frac{(Y^{r}_{\taur} - Z^{*}(\alpha))^{+}}{\alpha} + Z^{*}(\alpha)\right]~\mu_{g}(d\alpha)\right]\\
\leq
\ex\left[\int\left[\frac{(Y^{r}_{\tau^{r*}} - Z^{*}(\alpha))^{+}}{\alpha} + Z^{*}(\alpha)\right]~\mu_{g}(d\alpha)\right]\\
\leq
\ex\left[\int\left[\frac{(Y^{r}_{\tau^{r*}} - Z(\alpha))^{+}}{\alpha} + Z(\alpha)\right]~\mu_{g}(d\alpha)\right]
\end{multline*}
for $Z\in L^{1}_{+}(\mu_{g})$ with $Z(1) = 0$ and $\taur\in\cT^{r}.$
\end{proposition}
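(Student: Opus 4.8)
The plan is to read the common objective
\[
\Psi(\tau^{r},Z)\doteq\ex\left[\int\left(\frac{(Y^{r}_{\tau^{r}}-Z(\alpha))^{+}}{\alpha}+Z(\alpha)\right)\mu_{g}(d\alpha)\right]
\]
as a saddle function and to apply a minimax theorem after convexifying the stopping side by passing to randomized stopping times. First I would record two elementary features. For fixed $\tau^{r}$ the map $Z\mapsto\Psi(\tau^{r},Z)$ is convex, since $z\mapsto(Y^{r}_{\tau^{r}}-z)^{+}$ is convex and the remaining term is affine, and it is lower semicontinuous with respect to $L^{1}(\mu_{g})$-convergence by Fatou's lemma; moreover, by \eqref{DarstellungZielfunktion}, the value $\Psi(\tau^{r},Z)$ depends on $\tau^{r}$ only through the law of $Y^{r}_{\tau^{r}}$ and is affine in the survival function $x\mapsto 1-F_{Y^{r}_{\tau^{r}}}(x)$.

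The decisive step, and the sole reason for introducing randomized stopping times, is that $\Psi(\cdot,Z)$ is concave on a convex domain, which fails for ordinary stopping times. Concretely, for $\tau^{r}_{0},\tau^{r}_{1}\in\cTr$ and $\lambda\in[0,1]$ I would construct $\tau^{r}_{\lambda}\in\cTr$ whose law of $Y^{r}_{\tau^{r}_{\lambda}}$ equals $\lambda\,\mathrm{law}(Y^{r}_{\tau^{r}_{0}})+(1-\lambda)\,\mathrm{law}(Y^{r}_{\tau^{r}_{1}})$, the mixture being realised by using the auxiliary randomisation variable $u\in[0,1]$ to select between the two strategies (cf. \cite{edgar1982compactness}); together with the affineness noted above this yields $\Psi(\tau^{r}_{\lambda},Z)=\lambda\Psi(\tau^{r}_{0},Z)+(1-\lambda)\Psi(\tau^{r}_{1},Z)$, so that $\cTr$ is convex and $\Psi(\cdot,Z)$ is affine, hence quasiconcave, on it. I would then equip $\cTr$ with the Baxter--Chacon topology, under which it is compact (\cite{edgar1982compactness}), and check that $\tau^{r}\mapsto\Psi(\tau^{r},Z)$ is upper semicontinuous: because $(Y_{t})$ is right-continuous and dominated by $Y^{*}\in\cX_{g}$, each $(Y_{t}-z)^{+}$ is a right-continuous process of class (D), for which $\tau^{r}\mapsto\ex[(Y^{r}_{\tau^{r}}-z)^{+}]$ is upper semicontinuous in this topology, and integration in $\alpha$ preserves this by Fatou. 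With $\cTr$ compact convex and $\Psi(\cdot,Z)$ upper semicontinuous and concave, and $\{Z\in L^{1}_{+}(\mu_{g})\mid Z(1)=0\}$ convex with $\Psi(\tau^{r},\cdot)$ lower semicontinuous and convex, Sion's minimax theorem gives the asserted equality of $\sup_{\tau^{r}}\inf_{Z}$ and $\inf_{Z}\sup_{\tau^{r}}$, with no recourse to quasi-left-continuity.

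For the saddle point in the ``moreover'' part I would combine the minimax value $v$ with attainment of both outer problems. Since the inner infimum reproduces the conditional distortion, $\inf_{Z}\Psi(\tau^{r},Z)=\cE^{g}(Y^{r}_{\tau^{r}})$ by Lemma \ref{optimizedcertaintyequivalent}, the map $\tau^{r}\mapsto\inf_{Z}\Psi(\tau^{r},Z)$ is upper semicontinuous, so its maximum over the compact set $\cTr$ is attained at some $\tau^{r*}$ (equivalently, this maximiser is furnished by Proposition \ref{solution}, whose hypotheses are exactly those of the ``moreover'' clause), giving $\inf_{Z}\Psi(\tau^{r*},Z)=v$. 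On the other side, $Z\mapsto\sup_{\tau^{r}}\Psi(\tau^{r},Z)$ is convex, lower semicontinuous and coercive, because $\sup_{\tau^{r}}\Psi(\tau^{r},Z)\geq\|Z\|_{L^{1}(\mu_{g})}$ for $Z\geq 0$; a minimising sequence is therefore bounded in $L^{1}(\mu_{g})$, and a Koml\'os/Mazur argument combined with convexity and lower semicontinuity produces a minimiser $Z^{*}\in\{Z\geq 0,\,Z(1)=0\}$ with $\sup_{\tau^{r}}\Psi(\tau^{r},Z^{*})=v$. Chaining
\[
v=\inf_{Z}\Psi(\tau^{r*},Z)\leq\Psi(\tau^{r*},Z^{*})\leq\sup_{\tau^{r}}\Psi(\tau^{r},Z^{*})=v
\]
forces $\Psi(\tau^{r*},Z^{*})=v$, and the two outer identities then deliver exactly the claimed saddle-point inequalities.

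I expect the principal obstacle to be the concavity reduction through randomisation: the careful construction of the mixing stopping time $\tau^{r}_{\lambda}$ realising the convex combination of laws, together with the verification that $\cTr$ is a genuinely convex, Baxter--Chacon-compact set on which $\Psi(\cdot,Z)$ is upper semicontinuous. A secondary difficulty is the extraction of the minimiser $Z^{*}$, since the feasible set of $Z$'s is $L^{1}(\mu_{g})$-unbounded and not weakly compact; here the coercivity estimate together with the integrability $Y^{*}\in\cX_{g}$ recorded in \eqref{wichtige Integrierbarkeit} is what makes the Koml\'os/Mazur extraction go through.
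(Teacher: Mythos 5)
Your proposal correctly isolates one key structural ingredient that the paper also uses: the mixture property of randomized stopping times (for $\taur_{0},\taur_{1}\in\cTr$ and $\lambda\in ]0,1[$ there is $\taur_{\lambda}\in\cTr$ with $F_{Y^{r}_{\taur_{\lambda}}}=\lambda F_{Y^{r}_{\taur_{0}}}+(1-\lambda)F_{Y^{r}_{\taur_{1}}}$), which makes the objective affine in the stopping variable through its law. However, your proof of the first — and main — equality has a genuine gap: the claim that $\taur\mapsto\Psi(\taur,Z)$ is upper semicontinuous for the Baxter--Chacon topology because $(Y_{t})$ is right-continuous and of class (D). Right-continuity is not enough; this is exactly where quasi-left-continuity is needed, and the first part of the proposition assumes no quasi-left-continuity (nor countably generated $\cF_{T}$). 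Concretely, take the deterministic process $Y_{t}=\eins_{[0,1)}(t)$ on $[0,T]=[0,1]$ (nonnegative, adapted, right-continuous, bounded paths, but with a downward jump at the predictable time $1$). The stopping times $\taur_{n}\equiv 1-1/n$ converge to $\taur\equiv 1$ in the Baxter--Chacon topology, yet for $Z\equiv 0$ one has $\Psi(\taur_{n},0)=\ex[Y_{\taur_{n}}]\int_{0}^{1}\alpha^{-1}\mu_{g}(d\alpha)\geq 1$ for every $n$, while $\Psi(\taur,0)=0$; upper semicontinuity fails. Consequently Sion's theorem cannot be invoked, and your argument proves the minimax equality only under the additional hypotheses of the ``moreover'' clause, where \cite[Theorem 4.7]{edgar1982compactness} does supply the required convergence (this is how the paper proves Proposition \ref{solution}). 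Since the first equality is used later for general right-continuous processes (via Proposition \ref{derandomize2} inside Theorem \ref{new_representation}), this gap is not cosmetic.

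The paper avoids the problem by placing the compactness on the $Z$-side rather than the stopping-time side. By Lemma \ref{optimizedcertaintyequivalent}, the inner infimum over $\{Z\in L^{1}_{+}(\mu_{g})\mid Z(1)=0\}$ is attained at the quantile-type function $\eins_{]0,1[}F^{\leftarrow}_{Y^{r}_{\taur}}(1-\cdot)$, which is dominated $\mu_{g}$-a.s. by $\eins_{]0,1[}F^{\leftarrow}_{Y^{*}}(1-\cdot)\in L^{1}_{+}(\mu_{g})$; hence nothing changes if $Z$ is restricted to the order interval $\cK_{Y^{*}}$, which is uniformly $\mu_{g}$-integrable, so relatively weakly compact by Dunford--Pettis, and convex and norm-closed, so weakly compact. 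K\"onig's minimax theorem \cite[Theorem 4.9]{konig1982some} then applies: it needs no topology on $\cTr$ whatsoever, only the interpolation property you already established together with convexity and weak lower semicontinuity of $h(\taur,\cdot)$ on $\cK_{Y^{*}}$. This route also hands over the minimizer $Z^{*}$ for free (a weakly lower semicontinuous function attains its infimum on a weakly compact set), so your coercivity/Koml\'os extraction — which is essentially workable, and correctly addresses that the feasible set is $L^{1}$-unbounded — becomes unnecessary; the saddle point then follows, as in your chaining argument, from the minimax equality, Proposition \ref{solution}, and attainment on the $Z$-side.
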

The proof of Proposition \ref{minimax} can be found in Subsection \ref{beweis minimax}.
In the next step we shall provide conditions  ensuring that the stopping problems 
(\ref{stoppproblem}) and (\ref{randomstop}) have the same optimal value. 
\begin{proposition}
\label{derandomize2}
Let $(\Omega,\cF_{t},\pr|_{\cF_{t}})$ be atomless with countably generated $\cF_{t}$ for every $t > 0.$ If (\ref{Annahmen Young function}) is fulfilled, and if $Y^*_{t}\in {\cal X}_{g},$ then 
\begin{eqnarray*}
\sup\limits_{\tau^{r}\in\cT^{r}}\cE^{g}(Y_{\taur}) 
=
\sup\limits_{\tau\in\cT}\cE^{g}(Y_{\tau}).  
\end{eqnarray*}
\end{proposition}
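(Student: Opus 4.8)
The plan is to prove the two inequalities separately. The inequality $\sup_{\taur\in\cTr}\cE^{g}(Y_{\taur})\geq\sup_{\tau\in\cT}\cE^{g}(Y_{\tau})$ is immediate, since every ordinary stopping time $\tau\in\cT$ corresponds to a degenerate randomized stopping time $\taur\in\cTr$ with $\cE^{g}(Y_{\taur})=\cE^{g}(Y_{\tau})$, as already observed in the excerpt. All the work therefore lies in the reverse inequality, and for this it suffices to show that for every $\taur\in\cTr$ and every $\varepsilon>0$ there is an ordinary $\tau\in\cT$ with $\cE^{g}(Y_{\tau})\geq\cE^{g}(Y_{\taur})-\varepsilon$.

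First I would record a continuity property of $\cE^{g}$ used throughout: if $0\leq X_{n}\leq Y^{*}$ and $X_{n}\to X$ in distribution, then $\cE^{g}(X_{n})\to\cE^{g}(X)$. Indeed, $\cE^{g}(X)=\int_{0}^{\infty}g(1-F_{X}(x))\,dx$ for nonnegative $X$, the distribution functions converge at every continuity point of $F_{X}$, the map $g$ is continuous, and $X_{n}\leq Y^{*}$ forces $g(1-F_{X_{n}}(x))\leq g(1-F_{Y^{*}}(x))$, which is integrable because $Y^{*}\in\cX_{g}$; dominated convergence gives the claim. Since every candidate $Y_{\tau}$ and $Y^{r}_{\taur}$ is bounded by $Y^{*}$, this domination is always available. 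Next, given $\taur$ and $\varepsilon$, I would reduce to a randomized stopping time supported on a finite grid: fix $t_{0}<\dots<t_{J}=T$ and let $\sigma^{r}\doteq\lceil\taur\rceil$ be $\taur$ rounded up to the next grid point, which is again a randomized stopping time bounded by $T$. By right-continuity of the paths $Y^{r}_{\sigma^{r}}\to Y^{r}_{\taur}$ pointwise as the mesh tends to zero, hence in distribution, so by the continuity property $\cE^{g}(Y^{r}_{\sigma^{r}})\to\cE^{g}(Y^{r}_{\taur})$ and a fine enough grid yields $\cE^{g}(Y^{r}_{\sigma^{r}})\geq\cE^{g}(Y^{r}_{\taur})-\varepsilon/2$. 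The discrete randomized stopping time $\sigma^{r}$ is encoded by the adapted measurable partition of unity $q_{j}(\omega)\doteq\pr^{U}(\{u:\sigma^{r}(\omega,u)=t_{j}\})$, where each $q_{j}$ is $\cF_{t_{j}}$-measurable and $\sum_{j=0}^{J}q_{j}=1$, and $\ex[h(Y^{r}_{\sigma^{r}})]=\sum_{j=0}^{J}\ex[q_{j}\,h(Y_{t_{j}})]$ for bounded measurable $h$.

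The decisive step is the de-randomization of $\sigma^{r}$, and here I would invoke Proposition \ref{Dichtheit}: since each $(\Omega,\cF_{t_{j}},\pr|_{\cF_{t_{j}}})$ is atomless with $\cF_{t_{j}}$ countably generated, the adapted partition of unity $(q_{0},\dots,q_{J})$ can be approximated by an adapted partition into indicators $(\eins_{A_{0}},\dots,\eins_{A_{J}})$ with $A_{j}\in\cF_{t_{j}}$, in the weak sense that $\ex[\eins_{A_{j}}h]\to\ex[q_{j}h]$ for every member $h$ of a fixed countable convergence-determining family of $\cF_{t_{j}}$-measurable functions (e.g.\ $h=h_{k}(Y_{t_{j}})$ with $h_{k}$ bounded continuous). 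The indicator partition defines an ordinary stopping time $\tau\doteq\sum_{j=0}^{J}t_{j}\,\eins_{A_{j}}\in\cT$ with $\ex[h(Y_{\tau})]=\sum_{j=0}^{J}\ex[\eins_{A_{j}}h(Y_{t_{j}})]$, so that $Y_{\tau}\to Y^{r}_{\sigma^{r}}$ in distribution along the approximation. Applying the continuity property once more (all terms being dominated by $Y^{*}$) gives $\cE^{g}(Y_{\tau})\geq\cE^{g}(Y^{r}_{\sigma^{r}})-\varepsilon/2\geq\cE^{g}(Y^{r}_{\taur})-\varepsilon$, which is the required inequality.

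I expect the genuine obstacle to be precisely this last step, namely the simultaneous, adaptedness- and partition-preserving approximation of $(q_{j})$ by indicators encapsulated in Proposition \ref{Dichtheit}. The weak-$\ast$ approximation of a single $[0,1]$-valued function by an indicator on an atomless space is classical, but here one must approximate all $q_{j}$ at once, keep the constraint $\sum_{j}\eins_{A_{j}}=1$ with pairwise disjoint sets, and respect the measurability requirement $A_{j}\in\cF_{t_{j}}$ dictated by adaptedness. This is exactly where the atomless, countably generated structure of the filtration is indispensable, whereas the discretization and the continuity of $\cE^{g}$ are comparatively routine.
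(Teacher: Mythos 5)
Your proposal is correct and follows essentially the same route as the paper: discretize the randomized stopping time by rounding up to a grid (the paper uses dyadic grids in Lemma \ref{discretize}), exploit dominated-convergence continuity of $\cE^{g}$ under convergence in law with domination by $Y^{*}$ (the paper's Lemma \ref{approximationstopping}), encode the discrete randomized stopping time as an adapted measurable partition of unity, and de-randomize via Proposition \ref{Dichtheit} exactly as in the paper's Lemma \ref{missinglink}. The only cosmetic difference is that you test against a countable convergence-determining family to extract an approximating sequence of indicator partitions, where the paper invokes Lemma \ref{BanachAlaoglu} and Proposition \ref{angelic} for the same purpose.
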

The proof of Proposition \ref{derandomize2} is delegated to Subsection \ref{beweis derandomize2}.


\section{Proofs}
\label{proofs}
We shall start with some preparations which also will turn out to be useful later on. Let us recall (cf. \cite{edgar1982compactness}) that every 
$\taur\in\cTr$ induces a 
stochastic kernel
$
K_{\taur}:\Omega\times\cB([0,T])\rightarrow [0,1]
$ 
with $K_{\taur}(\omega,\cdot)$ being the distribution of $\taur(\omega,\cdot)$ under $\pr^{U}$ for any $\omega\in\Omega.$ Here $\cB([0,T])$ stands for the usual Borel $\sigma-$algebra on $[0,T].$ This stochastic kernel has the following properties:
\begin{eqnarray*}
&& 
K_{\taur}(\cdot,[0,t])~\mbox{is}~\cF_{t}-\mbox{measurable for every}~t\geq 0,\\
&&
K_{\taur}(\omega,[0,t]) = \sup\{u\in [0,1]\mid \taur(\omega,u)\leq t\}.
\end{eqnarray*}
The  associated stochastic kernel $K_{\taur}$ is useful to characterize the distribution function $F_{Y^{r}_{\taur}}$ of $Y_{\taur}^{r}.$ 
\begin{lemma}
\label{stopped distribution}
For any $\taur\in\cTr$ with associated stochastic kernel $K_{\taur},$ the distribution function $F_{Y_{\taur}^{r}}$ of $Y_{\taur}^{r}$ may be represented in the following way
$$
F_{Y_{\taur}^{r}}(x) = \ex[K_{\taur}(\cdot,\{t\in [0,T]\mid Y_{t}\leq x\})]\quad\mbox{for}~ x\in\R.
$$
\end{lemma}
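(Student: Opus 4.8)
The plan is to compute the distribution function directly from its definition on the enlarged probability space $\big(\Omega\times[0,1],\cF\otimes\cB([0,1]),\pr\otimes\pr^{U}\big)$ and then to disintegrate the product measure via Tonelli's theorem, evaluating the inner integral over $u$ as an integral against the kernel $K_{\taur}(\omega,\cdot)$.

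First I would record that $Y_{\taur}^{r}$ is a genuine random variable on the enlarged space. Since $\taur$ is an ordinary stopping time with respect to $\big(\cF_{t}\otimes\cB([0,1])\big)_{t\in[0,T]}$ and $Y^{r}$ is right-continuous and adapted to this filtration, the stopped value $Y_{\taur}^{r}(\omega,u)=Y_{\taur(\omega,u)}(\omega)$ is $\cF\otimes\cB([0,1])$-measurable. In particular the event $\{Y_{\taur}^{r}\le x\}$ lies in the product $\sigma$-algebra, so that by definition
$$
F_{Y_{\taur}^{r}}(x)=\big(\pr\otimes\pr^{U}\big)\big(\{(\omega,u)\mid Y_{\taur(\omega,u)}(\omega)\le x\}\big)=\ex_{\pr\otimes\pr^{U}}\big[\eins\{Y_{\taur}^{r}\le x\}\big].
$$

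Next I would fix $x\in\R$ and introduce the level set $A_{x}(\omega)\doteq\{t\in[0,T]\mid Y_{t}(\omega)\le x\}$. Because $(Y_{t})$ is right-continuous and adapted, the map $(\omega,t)\mapsto Y_{t}(\omega)$ is progressively, hence jointly, measurable, so $A_{x}(\omega)$ is a Borel subset of $[0,T]$ for every $\omega$ and the integrand below is jointly measurable. Applying Tonelli's theorem (the integrand is nonnegative) to integrate out $u$ first, and using that $\taur(\omega,u)\in[0,T]$ so that $Y_{\taur(\omega,u)}(\omega)\le x\iff\taur(\omega,u)\in A_{x}(\omega)$, gives
$$
F_{Y_{\taur}^{r}}(x)=\int_{\Omega}\left(\int_{0}^{1}\eins\{\taur(\omega,u)\in A_{x}(\omega)\}\,\pr^{U}(du)\right)d\pr(\omega).
$$
The inner integral is exactly the $\pr^{U}$-probability that $\taur(\omega,\cdot)$ lands in $A_{x}(\omega)$, which by the very definition of $K_{\taur}(\omega,\cdot)$ as the image measure of $\pr^{U}$ under $u\mapsto\taur(\omega,u)$ equals $K_{\taur}(\omega,A_{x}(\omega))$. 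Substituting yields the claimed identity $F_{Y_{\taur}^{r}}(x)=\ex\big[K_{\taur}(\cdot,\{t\in[0,T]\mid Y_{t}\le x\})\big]$.

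The only genuinely delicate point is measurability: one must ensure both that $A_{x}(\omega)$ is Borel in $t$ (so the kernel may be evaluated on it) and that $\omega\mapsto K_{\taur}(\omega,A_{x}(\omega))$ is $\cF$-measurable (so the final expectation is well defined). Both are settled by the joint measurability of $(\omega,t)\mapsto Y_{t}(\omega)$ together with the standard fact that $\omega\mapsto\int f(\omega,t)\,K_{\taur}(\omega,dt)$ is measurable for any bounded jointly measurable $f$ and any stochastic kernel $K_{\taur}$, applied here to $f(\omega,t)=\eins\{Y_{t}(\omega)\le x\}$. Everything else reduces to a routine use of Tonelli's theorem and the definition of the pushforward measure.
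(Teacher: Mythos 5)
Your proposal is correct. Note that for this lemma the paper itself offers no argument at all: its ``proof'' is the single line ``cf.\ Lemma 7.1 in \cite{BelomestnyKraetschmer2014}'', delegating everything to the earlier Belomestny--Kr\"atschmer paper. Your write-up therefore serves as a self-contained replacement for that citation, and it follows the natural (and almost certainly the cited) route: realize $Y^{r}_{\taur}$ as a random variable on the enlarged space $\big(\Omega\times[0,1],\cF\otimes\cB([0,1]),\pr\otimes\pr^{U}\big)$, write $F_{Y^{r}_{\taur}}(x)$ as the product-measure mass of $\{(\omega,u)\mid Y_{\taur(\omega,u)}(\omega)\leq x\}$, integrate out $u$ first by Tonelli, and identify the inner integral as $K_{\taur}(\omega,A_{x}(\omega))$ with $A_{x}(\omega)=\{t\in[0,T]\mid Y_{t}(\omega)\leq x\}$, since $K_{\taur}(\omega,\cdot)$ is by definition the image of $\pr^{U}$ under $\taur(\omega,\cdot)$. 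You also handle the two points that genuinely need care and that a careless write-up would skip: joint measurability of $(\omega,t)\mapsto Y_{t}(\omega)$ (via progressive measurability of the right-continuous adapted process $(Y_{t})$, which makes $A_{x}(\omega)$ Borel and the indicator $f(\omega,t)=\eins\{Y_{t}(\omega)\leq x\}$ jointly measurable), and measurability of $\omega\mapsto K_{\taur}(\omega,A_{x}(\omega))$ (via the standard fact that $\omega\mapsto\int f(\omega,t)\,K_{\taur}(\omega,dt)$ is measurable for jointly measurable bounded $f$). One could quibble that your appeal to the pushforward property implicitly uses that $\taur(\omega,\cdot)$ takes values in $[0,T]$ so that $K_{\taur}(\omega,\cdot)$ is indeed a probability measure on $\cB([0,T])$, but this is guaranteed by the paper's convention $\taur\leq T$ for $\taur\in\cTr$, so there is no gap.
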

\begin{proof}
~cf. \cite[Lemma 7.1]{BelomestnyKraetschmer2014}.
\end{proof}


\subsection{Proof of Proposition \ref{minimax}}
\label{beweis minimax}
The random variable $Y^{*}$ is assumed to belong to $\cX_{g}.$ In particular $Y^{r}_{\taur}\in\cX_{g}$ for $\taur\in\cT^{r},$ and in view of Lemma \ref{optimizedcertaintyequivalent} (cf. Appendix \ref{AppendixAA}), we have  
\begin{eqnarray}
\label{Hilfsstoppproblem2} \notag
&&
\cE^{g}(Y^{r}_{\taur})\\ 
&=& 
\inf_{Z\in L^{1}_{+}(\mu_{g}) \atop Z(1) = 0} 
\ex\left[\int\left[\frac{(Y^{r}_{\taur} - Z(\alpha))^{+}}{\alpha} + Z(\alpha)\right]~\mu_{g}(d\alpha)\right]\\ \notag
&=&
\ex\left[\int\eins_{]0,1[}(\alpha)\left[\frac{(Y^{r}_{\taur} - F^{\leftarrow}_{Y^{r}_{\taur}}(1 - \alpha))^{+}}{\alpha} + F^{\leftarrow}_{Y^{r}_{\taur}}(1 - \alpha)\right]~\mu_{g}(d\alpha)\right]\\ \notag
&&
+ \mu_{g}(\{1\})\ex[X],
\end{eqnarray}
where $F^{\leftarrow}_{Y^{r}_{\taur}}$ denotes the left-continuous quantile function of the distribution function $F_{Y^{r}_{\taur}}$ of $Y^{r}_{\taur}.$ 
Also by Lemma \ref{optimizedcertaintyequivalent} we obtain
\begin{eqnarray*}
\infty 
&>& 
\ex\left[\int\eins_{]0,1[}(\alpha)\left[\frac{(Y^{*} - F^{\leftarrow}_{Y^{*}}(1 - \alpha))^{+}}{\alpha} + 
F^{\leftarrow}_{Y^{*}}(1 - \alpha)\right]~\mu_{g}(d\alpha)\right]\\ 
&\geq& 
\int\eins_{]0,1[}F^{\leftarrow}_{Y^{*}}(1 - \alpha)~\mu_{g}(d\alpha),
\end{eqnarray*}
where $F^{\leftarrow}_{Y^{*}}$ stands for the left-continuous quantile function of the distribution function $F_{Y^{*}}$ of $Y^{*}.$ In particular 
$\eins_{]0,1[}F^{\leftarrow}_{Y^{*}}(1-\cdot)\in L^{1}_{+}(\mu_{g}).$ Since 
the inequality $F^{\leftarrow}_{Y^{r}_{\taur}}(\alpha)\leq F^{\leftarrow}_{Y^{*}}(\alpha)$ is valid for every 
$\taur\in\cTr$ and any $\alpha$ from $]0,1[,$ we may conclude that
\begin{equation}
\label{stoppingcompact}
\cE^{g}(Y^{r}_{\taur}) = \inf_{Z\in \cK_{Y^{*}}} 
\ex\left[\int\left[\frac{(Y^{r}_{\taur} - Z(\alpha))^{+}}{\alpha} + Z(\alpha)\right]~\mu_{g}(d\alpha)\right]
\end{equation}
holds for $\taur\in\cTr,$ where 
$$
\cK_{Y^{*}} \doteq \{Z\in L^{1}_{+}(\mu_{g})\mid  Z = \eins_{]0,1[} Z\leq \eins_{]0,1[}F^{\leftarrow}_{Y^{*}}(1 - \cdot)~ \mu_{g}-\mbox{a.s.}\}.
$$
Let us define the mapping $h: \cTr\times \cK_{Y^{*}}\rightarrow [0,\infty]$ by
$$
h(\taur,Z) \doteq \ex\left[\int\left[\frac{(Y^{r}_{\taur} - Z(\alpha))^{+}}{\alpha} + Z(\alpha)\right]~\mu_{g}(d\alpha)\right].
$$
Since $\eins_{]0,1[}F^{\leftarrow}_{Y^{*}}(1-\cdot)\in L^{1}_{+}(\mu_{g}),$ the set $\cK_{Y^{*}}$ is uniformly $\mu_{g}-$integrable, in particular it is a relatively weakly compact subset of $L^{1}(\mu_{g})$ by Dunford-Pettis theorem. Moreover, $\cK_{Y^{*}}$ is convex and closed w.r.t. the $L^{1}-$norm so that it is also weakly closed. Thus 
\begin{equation}
\label{ersteBedingungMinimax}
\cK_{Y^{*}}~\mbox{is a weakly compact subset of}~ L^{1}(\mu_{g}).
\end{equation}
Next we may observe directly from \eqref{stoppingcompact} that
\begin{equation}
\label{zweiteBedingungMinimax}
h(\taur,\cdot)~\mbox{is a proper convex function for every}~\taur\in\cTr. 
\end{equation}
Using Tonelli's theorem, we obtain for any $\taur\in\cT^{r}$ and every $Z\in L^{1}_{+}(\mu_{g})$ 
\begin{eqnarray}
\label{DarstellungZielfunktion} \notag
&&
\ex\left[\int\left[\frac{(Y^{r}_{\taur} - Z(\alpha))^{+}}{\alpha} + Z(\alpha)\right]~\mu_{g}(d\alpha)\right]\\ \notag
&=& 
\int\ex\left[\frac{(Y^{r}_{\taur} - Z(\alpha))^{+}}{\alpha} + Z(\alpha)\right]~\mu_{g}(d\alpha)\\
&=& 
\int\left(\int_{Z(\alpha)}^{\infty}\frac{(1 - F_{Y^{r}_{\taur}}(x))}{\alpha}~dx + Z(\alpha)\right)~\mu_{g}(d\alpha) 
\end{eqnarray}
Applying the monotone convergence theorem, we may rewrite  $h$ in the following way.
\begin{equation}
\label{rewrite} 
h(\taur,Z)
= 
\lim_{\varepsilon\to\infty}\int\left(\int_{Z(\alpha)}^{\varepsilon}\frac{(1 - F_{Y^{r}_{\taur}}(x))}{\alpha}~dx + Z(\alpha)\right)~\mu_{g}(d\alpha). 
\end{equation}
Moreover, for every $\taur_{1},\taur_{2}\in\cTr$ and any $\lambda\in ]0,1[,$ there exists some $\taur\in\cTr$ such that $F_{Y^{r}_{\taur}} = \lambda F_{Y^{r}_{\taur_{1}}} + 
(1 - \lambda) F_{Y^{r}_{\taur_{2}}}$ (see \cite[subsection 7.1]{BelomestnyKraetschmer2014}). Hence we may conclude from \eqref{rewrite}
\begin{equation}
\label{dritteBedingungMinimax}
\forall \taur_{1},\taur_{2}\in\cTr~\forall \lambda\in ]0,1[~\exists \taur\in\cTr: 
h(\taur,\cdot) = \lambda h(\taur_{1},\cdot) + (1-\lambda) h(\taur_{2},\cdot).
\end{equation}
We want to apply K\"onig's minimax theorem (cf. \cite[Theorem 4.9]{konig1982some}) to the mapping $h.$ In view of \eqref{ersteBedingungMinimax}, \eqref{zweiteBedingungMinimax}, 
\eqref{dritteBedingungMinimax} it remains to show that $h(\taur,\cdot)$ is weakly lower semicontinuous for any $\taur.$ For this purpose let fix an arbitrary $\taur\in\cTr.$ Since $h(\taur,\cdot)$ is a convex mapping, it suffices to show that it is lower semicontinuous w.r.t. the $L^{1}-$norm. So let $(Z_{n})_{n\in\N}$ be any sequence in 
$\cK_{Y^{*}}$ which converges to some $Z\in \cK_{Y^{*}}.$ Then for every $\varepsilon > 0,$ we obtain
\begin{eqnarray*}
&&
\left|\int\int_{Z_{n}(\alpha)}^{\varepsilon}(1 - F_{Y_{\taur}}(x))~dx ~\mu_{g}(d\alpha) 
- \int\int_{Z(\alpha)}^{\varepsilon}(1 - F_{Y_{\taur}}(x))~dx~\mu_{g}(d\alpha)\right|\\
&\leq& 
\int\left|\int_{Z_{n}(\alpha)}^{Z(\alpha)}(1 - F_{Y_{\taur}}(x))~dx\right| ~\mu_{g}(d\alpha) 
\leq 
\int\left|Z_{n}(\alpha) - Z(\alpha)\right| ~\mu_{g}(d\alpha)\to 0\quad\mbox{for}~n\to\infty. 
\end{eqnarray*}
Hence we may conclude from \eqref{rewrite}
\begin{eqnarray*}
\liminf_{n\to\infty}h(\taur,Z_{n}) 
&\geq& 
\liminf_{n\to\infty}\int\left(\int_{Z_{n}(\alpha)}^{\varepsilon}(1 - F_{Y_{\taur}}(x))~dx  + Z_{n}(\alpha)\right)~\mu_{g}(d\alpha)\\
&\geq&
\int\left(\int_{Z(\alpha)}^{\varepsilon}(1 - F_{Y_{\taur}}(x))~dx  + Z(\alpha)\right)~\mu_{g}(d\alpha).
\end{eqnarray*}
Then the application of \eqref{rewrite} again yields 
$\liminf_{n\to\infty}h(\taur,Z_{n})\geq h(\taur,Z).$ Thus $h(\taur,\cdot)$ is shown to be lower semicontinuous w.r.t. the $L^{1}-$norm so that by K\"onig's minimax theorem along with \eqref{Hilfsstoppproblem2} and \eqref{stoppingcompact}
\begin{eqnarray*}
\sup_{\taur\in\cT^{r}}\inf_{Z\in K_{Y^{*}}} h(\taur,z) 
&=&
\inf_{Z\in K_{Y^{*}}}\sup_{\taur\in\cT^{r}} h(\taur,z)\\
&\geq&
\inf_{Z\in L^{1}_{+}(\mu_{g})}\sup_{\taur\in\cT^{r}}\ex\left[\int\left[\frac{(Y^{r}_{\taur} - Z(\alpha))^{+}}{\alpha} + Z(\alpha)\right]~\mu_{g}(d\alpha)\right]\\
&\geq& 
\sup_{\taur\in\cT^{r}}\inf_{Z\in L^{1}_{+}(\mu_{g})}\ex\left[\int\left[\frac{(Y^{r}_{\taur} - Z(\alpha))^{+}}{\alpha} + Z(\alpha)\right]~\mu_{g}(d\alpha)\right]\\
&=&
\sup_{\taur\in\cT^{r}}\inf_{Z\in K_{Y^{*}}} h(\taur,z). 
\end{eqnarray*}
This shows the first part of Proposition \ref{minimax}. The second part of Proposition \ref{minimax} follows immediately from the first one along with 
\eqref{Hilfsstoppproblem2} and Proposition \ref{solution}. The proof of Proposition \ref{minimax} is complete.
\hfill$\Box$
\subsection{Proof of Proposition \ref{derandomize2}}
\label{beweis derandomize2}
The starting idea for proving Proposition \ref{derandomize2} is to reduce the stopping problem (\ref{randomstop}) to suitably discretized random stopping times. 
The choice of the discretized randomized stopping times is suggested by the following lemma (cf. \cite[Lemma 7.2]{BelomestnyKraetschmer2014}).
\begin{lemma}
\label{discretize}
For $\taur\in\cTr$ the construction
$$
\taur[j](\omega,u) \doteq \min\{k/2^{j}\mid k\in\N, \taur(\omega,u)\leq k/2^{j}\}\wedge T
$$
defines a sequence $(\taur[j])_{j\in\N}$ in $\cTr$ satisfying the following properties.
\begin{enumerate}
\item [\rm{(i)}] $\taur[j]\searrow\taur$ pointwise, in particular it follows
$$
\lim\limits_{j\to\infty}Y^{r}_{\taur[j](\omega,u)}(\omega,u) = Y^{r}_{\taur(\omega,u)}(\omega,u)
$$
for any $\omega\in\Omega$ and every $u\in [0,1].$ 
\item [\rm{(ii)}] $\lim\limits_{j\to\infty}F_{Y^{r}_{\taur[j]}}(x) = F^{r}_{Y_{\taur}}(x)$ holds for any continuity point $x$ of $F_{Y_{\taur}}.$
\item [\rm{(iii)}] For any $x\in\R$ and every $j\in\N$ we have
$$
F_{Y^{r}_{\taur[j]}}(x) = \ex\left[\widehat{Y}_{t_{1j}}^{x} K_{\taur}(\cdot,[0,t_{1j}])\right] 
+ \sum\limits_{k=2}^{\infty}\ex\left[\widehat{Y}_{t_{kj}}^{x}\, K_{\taur}(\cdot,]t_{(k-1)j},
t_{kj}])\right],
$$
where  $t_{kj} \doteq (k/2^{j})\wedge T$ for $k\in\N,$ and $\widehat{Y}^{x}_{t} \doteq \eins_{]-\infty,x]}\circ Y_{t}$ for $t\in [0,T].$
\end{enumerate}
\end{lemma}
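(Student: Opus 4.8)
The plan is to treat the discretization $\taur[j]$ as the image of $\taur$ under the deterministic round-up map $r_{j}\colon t\mapsto\bigl(\lceil 2^{j}t\rceil/2^{j}\bigr)\wedge T$, so that $\taur[j](\omega,u)=r_{j}\bigl(\taur(\omega,u)\bigr)$, and to read off all three assertions from the elementary properties of $r_{j}$. First I would verify that each $\taur[j]$ is again an admissible randomized stopping time. For fixed $u$, the map $r_{j}$ takes values in the grid $\{t_{kj}\mid k\in\N\}$, and for the largest grid point $t_{kj}\le t$ one has $\{\taur[j](\cdot,u)\le t\}=\{\taur(\cdot,u)\le t_{kj}\}\in\cF_{t_{kj}}\subseteq\cF_{t}$, using that $\taur(\cdot,u)$ is a stopping time; and for fixed $\omega$ the composition $r_{j}\circ\taur(\omega,\cdot)$ inherits monotonicity and left-continuity in $u$ from both factors, since $r_{j}$ is a nondecreasing left-continuous step function. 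Because the dyadic grids are nested, $r_{j}(t)$ is nonincreasing in $j$ with $r_{j}(t)\to t$, whence $\taur[j]\searrow\taur$ pointwise on $\Omega\times[0,1]$.

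Part (i) then follows at once: the paths $t\mapsto Y_{t}(\omega)$ are right-continuous and $\taur[j](\omega,u)\downarrow\taur(\omega,u)$, so $Y_{\taur[j](\omega,u)}(\omega)\to Y_{\taur(\omega,u)}(\omega)$; in the boundary case $\taur(\omega,u)=T$ one notes $r_{j}(T)=T$, so the sequence is eventually (indeed always) constant. Recalling $Y^{r}_{t}(\omega,u)=Y_{t}(\omega)$, this is exactly the asserted convergence $Y^{r}_{\taur[j]}\to Y^{r}_{\taur}$ everywhere on $\Omega\times[0,1]$.

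For part (ii), the everywhere convergence from part (i) is in particular $(\pr\otimes\pr^{U})$-almost sure convergence of the random variables $Y^{r}_{\taur[j]}$ on the enlarged space to $Y^{r}_{\taur}$, hence convergence in distribution. By the standard portmanteau argument this yields $F_{Y^{r}_{\taur[j]}}(x)\to F_{Y^{r}_{\taur}}(x)$ at every continuity point $x$ of the limit distribution function $F_{Y^{r}_{\taur}}$, which is the claim.

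The substantial point is part (iii), and this is where I expect the real work. The idea is that $K_{\taur[j]}(\omega,\cdot)$, being the law of $r_{j}\circ\taur(\omega,\cdot)$ under $\pr^{U}$, is the pushforward of $K_{\taur}(\omega,\cdot)$ under $r_{j}$ and hence a discrete measure carried by the grid. Since $r_{j}(t)=t_{kj}$ precisely when $t\in\,]t_{(k-1)j},t_{kj}]$ for $k\ge 2$ and $t\in[0,t_{1j}]$ for $k=1$, one obtains
$$
K_{\taur[j]}(\omega,\{t_{1j}\})=K_{\taur}(\omega,[0,t_{1j}]),\qquad
K_{\taur[j]}(\omega,\{t_{kj}\})=K_{\taur}(\omega,\,]t_{(k-1)j},t_{kj}])\quad(k\ge 2),
$$
the capping at $T$ being harmless because $\taur\le T$ forces every interval beyond $T$ to carry zero mass, so the series is effectively finite. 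Feeding this into Lemma \ref{stopped distribution} applied to $\taur[j]$, together with the identity $K_{\taur[j]}(\omega,\{t:Y_{t}\le x\})=\sum_{k}\widehat{Y}^{x}_{t_{kj}}(\omega)\,K_{\taur[j]}(\omega,\{t_{kj}\})$ (valid because $K_{\taur[j]}(\omega,\cdot)$ lives on the grid and $\widehat{Y}^{x}_{t}=\eins_{]-\infty,x]}\circ Y_{t}$), and taking expectations, produces the stated formula. The main obstacle is the bookkeeping at the grid boundary and under the cap: one must check that the half-open blocks $]t_{(k-1)j},t_{kj}]$ together with the closed initial block $[0,t_{1j}]$ partition $[0,T]$ correctly, and that monotone convergence legitimizes interchanging the expectation with the (essentially finite) sum over $k$.
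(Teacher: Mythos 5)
Your proof is correct. The paper itself contains no proof of this lemma---it is quoted from \cite[Lemma 7.2]{BelomestnyKraetschmer2014}---so there is nothing internal to compare against; your argument (write $\taur[j]=r_{j}\circ\taur$ for the dyadic round-up $r_{j}$, check that $r_{j}$ is nondecreasing, left-continuous and grid-valued so that $\taur[j]\in\cTr$ and $\taur[j]\searrow\taur$ by nestedness of the grids, get (i) from right-continuity of the paths, (ii) from a.s.\ convergence on the enlarged space, and (iii) by identifying $K_{\taur[j]}(\omega,\cdot)$ with the pushforward of $K_{\taur}(\omega,\cdot)$ under $r_{j}$ and plugging into Lemma \ref{stopped distribution}) is the natural self-contained route, and all steps are sound, including the observation that the blocks beyond the cap at $T$ are empty so the series in (iii) is effectively finite. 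The only blemish is notational: with the paper's convention $\N=\{1,2,\dots\}$ the definition sends $\taur(\omega,u)=0$ to $t_{1j}=2^{-j}\wedge T$, whereas your closed formula $r_{j}(t)=\bigl(\lceil 2^{j}t\rceil/2^{j}\bigr)\wedge T$ fixes $t=0$; since your subsequent block decomposition ($[0,t_{1j}]\mapsto t_{1j}$ and $]t_{(k-1)j},t_{kj}]\mapsto t_{kj}$ for $k\geq 2$) uses the paper's convention, which is the correct one, nothing in (i)--(iii) is affected.
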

We shall use the discretized randomized stopping times, as defined in Lemma \ref{discretize}, to show 
that we can restrict ourselves to discrete randomized stopping times in the stopping problem (\ref{randomstop}). This will be an immediate consequence of the following 
auxiliary result.
\begin{lemma}
\label{approximationstopping}
Let $(\taur_{n})_{n\in\N}$ be any sequence such that $(Y^{r}_{\taur_{n}})_{n\in\N}$ converges in law to $Y^{r}_{\taur}$ for some $\taur\in\cTr.$ 
If (\ref{Annahmen Young function}) is fulfilled, and if $Y^{*}\doteq\sup_{t\in [0,T]}Y_{t}\in 
{\cal X}_{g},$ then
$$
\lim_{n\to\infty}\cE^{g}(Y^{r}_{\taur_{n}}) = \cE^{g}(Y^{r}_{\taur}).
$$ 
\end{lemma}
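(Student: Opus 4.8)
The plan is to reduce the statement to a single application of the dominated convergence theorem on the half-line $[0,\infty[$, exploiting both the nonnegativity of the process $(Y_t)$ and the pointwise domination of each stopped value by $Y^{*}$. First I would record that, since $Y_t\geq 0$ for all $t$, every stopped value $Y^{r}_{\taur}$ is nonnegative, so the negative part of the distorted expectation vanishes and
\[
\cE^{g}(Y^{r}_{\taur_{n}}) = \int_{0}^{\infty} g\bigl(1 - F_{Y^{r}_{\taur_{n}}}(x)\bigr)~dx,\qquad
\cE^{g}(Y^{r}_{\taur}) = \int_{0}^{\infty} g\bigl(1 - F_{Y^{r}_{\taur}}(x)\bigr)~dx.
\]
The claim thus amounts to convergence of the left-hand integral to the right-hand one.

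Next I would establish convergence of the integrands Lebesgue-almost everywhere. Convergence in law of $Y^{r}_{\taur_{n}}$ to $Y^{r}_{\taur}$ gives $F_{Y^{r}_{\taur_{n}}}(x)\to F_{Y^{r}_{\taur}}(x)$ at every continuity point $x$ of $F_{Y^{r}_{\taur}}$, and a distribution function has at most countably many discontinuities, so Lebesgue-almost every $x>0$ is such a point. Since $g$ is continuous by \eqref{Annahmen Young function}, it follows that $g\bigl(1 - F_{Y^{r}_{\taur_{n}}}(x)\bigr)\to g\bigl(1 - F_{Y^{r}_{\taur}}(x)\bigr)$ for Lebesgue-almost every $x>0$.

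The crucial step, and the only one needing care, is to produce an integrable dominating function uniform in $n$. Viewing $Y^{*}$ as a random variable on the enlarged space $\Omega\times[0,1]$ that is constant in the second coordinate, the bound $\taur_{n}\leq T$ yields $Y^{r}_{\taur_{n}}(\omega,u)=Y_{\taur_{n}(\omega,u)}(\omega)\leq Y^{*}(\omega)$ pointwise, while this lifted version of $Y^{*}$ keeps the law $F_{Y^{*}}$. The pointwise inequality gives the stochastic dominance $F_{Y^{r}_{\taur_{n}}}(x)\geq F_{Y^{*}}(x)$ for all $x$, hence $1 - F_{Y^{r}_{\taur_{n}}}(x)\leq 1 - F_{Y^{*}}(x)$. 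Since $g$ is nondecreasing (being a distortion function; this also follows from concavity together with $0\leq g\leq 1$ and $g(1)=1$), one obtains
\[
0\leq g\bigl(1 - F_{Y^{r}_{\taur_{n}}}(x)\bigr)\leq g\bigl(1 - F_{Y^{*}}(x)\bigr)\qquad\mbox{for all}~x>0~\mbox{and all}~n,
\]
and because $Y^{*}\geq 0$ lies in $\cX_{g}$, the dominating function is integrable, namely $\int_{0}^{\infty} g\bigl(1 - F_{Y^{*}}(x)\bigr)~dx=\cE^{g}(Y^{*})<\infty$.

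Finally I would invoke the dominated convergence theorem to pass to the limit inside the integral, concluding $\lim_{n\to\infty}\cE^{g}(Y^{r}_{\taur_{n}})=\cE^{g}(Y^{r}_{\taur})$. I expect the domination to be the main (indeed the only) real obstacle; the pointwise convergence and the collapse to the positive part are routine. I would also note that the argument uses nothing beyond convergence in law of the stopped values and the hypothesis $Y^{*}\in\cX_{g}$, so it holds for an arbitrary approximating sequence $(\taur_{n})$ and is not tied to the discretizations of Lemma \ref{discretize}.
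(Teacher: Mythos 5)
Your proposal is correct and follows essentially the same route as the paper's proof: pointwise convergence of $g\bigl(1 - F_{Y^{r}_{\taur_{n}}}(x)\bigr)$ at continuity points via continuity of $g$ and convergence in law, domination by $g\bigl(1 - F_{Y^{*}}(x)\bigr)$, and the dominated convergence theorem using $Y^{*}\in\cX_{g}$. The only difference is that you explicitly justify the domination (lifting $Y^{*}$ to the enlarged space, stochastic dominance, monotonicity of $g$) and the almost-everywhere convergence, steps the paper simply asserts.
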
 
\begin{proof}
Since $g$ is continuous we have 
$$
\lim_{n\to\infty}g(1- F_{Y^{r}_{\taur_{n}}}(X)) = 
g(1 - F_{Y^{r}_{\taur}}(x))
$$ 
for every continuity point of $F_{Y^{r}_{\taur}}$ due to assumption on the convergence of $(Y^{r}_{\taur_{n}})_{n\in\N}.$ Moreover, $g(1 - F_{Y^{r}_{\taur_{n}}}(x))\leq 
g(1 - F_{Y^{*}}(x))$ holds for every $n\in\N$ and any $x > 0.$ So, by 
$Y^{*}\in {\cal X}_{g}$ we may apply the dominated convergence theorem to conclude
\begin{eqnarray*}
\lim_{n\to\infty}\cE^{g}(Y^{r}_{\taur_{n}}) = \lim_{n\to\infty}\int_{0}^{\infty} 
g(1 - F_{Y^{r}_{\taur_{n}}}(x))~dx &=& \int_{0}^{\infty}g(1 - F_{Y^{r}_{\taur}}(x))~dx 
= 
\cE^{g}(Y^{r}_{\taur}).
\end{eqnarray*}
This completes the proof.
\end{proof}
Combining Lemma \ref{approximationstopping} with Lemma \ref{stopped distribution} we obtain the following corollary.
\begin{corollary}
\label{discretizedstop}
If (\ref{Annahmen Young function}) is fulfilled, and 
$Y^{*}\doteq\sup_{t\in [0,T]}Y_{t}\in 
{\cal X}_{g},$ then for any $\taur\in\cTr$ we have 
$$
\cE^{g}(Y^{r}_{\taur}) = \lim_{j\to\infty}\cE^{g}(Y^{r}_{\taur[j]}).
$$
\end{corollary}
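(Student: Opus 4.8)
The plan is to read this corollary off directly from the two preceding lemmas, since the dyadic discretizations $\taur[j]$ constructed in Lemma \ref{discretize} provide exactly the approximating sequence required by Lemma \ref{approximationstopping}.

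First I would fix $\taur\in\cTr$ and form the sequence $(\taur[j])_{j\in\N}$ as in Lemma \ref{discretize}. The decisive input is that $Y^{r}_{\taur[j]}$ converges in law to $Y^{r}_{\taur}$. This is supplied by part (ii) of Lemma \ref{discretize}, which states that $\lim_{j\to\infty}F_{Y^{r}_{\taur[j]}}(x)=F_{Y^{r}_{\taur}}(x)$ at every continuity point $x$ of $F_{Y^{r}_{\taur}}$; by definition this is convergence in distribution. Equivalently, one may obtain the same conclusion from part (i): since $\taur[j]\searrow\taur$ pointwise and the paths $t\mapsto Y_{t}$ are right-continuous, $Y^{r}_{\taur[j](\omega,u)}(\omega,u)\to Y^{r}_{\taur(\omega,u)}(\omega,u)$ for every $(\omega,u)$, hence almost surely on the enlarged space, which again forces convergence in law. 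The kernel representations make this passage transparent: the formula $F_{Y^{r}_{\taur}}(x)=\ex[K_{\taur}(\cdot,\{t\in[0,T]\mid Y_{t}\leq x\})]$ of Lemma \ref{stopped distribution}, together with the discrete sum of Lemma \ref{discretize}(iii), shows that at each continuity point the dyadic partition sums converge to the integral of $\widehat{Y}^{x}$ against $K_{\taur}(\omega,\cdot)$.

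With convergence in law in hand, I would apply Lemma \ref{approximationstopping} to the sequence $\taur_{n}:=\taur[n]$. Its hypotheses are precisely those assumed in the corollary: $g$ is continuous and concave by \eqref{Annahmen Young function}, and $Y^{*}=\sup_{t\in[0,T]}Y_{t}\in\cX_{g}$ furnishes the integrable dominating function $g(1-F_{Y^{*}}(\cdot))$ needed for the dominated-convergence step carried out inside that lemma. Lemma \ref{approximationstopping} then yields $\lim_{j\to\infty}\cE^{g}(Y^{r}_{\taur[j]})=\cE^{g}(Y^{r}_{\taur})$, which is exactly the assertion.

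Since both lemmas are already established, there is no genuine obstacle here and the corollary is essentially immediate. The only point that deserves a moment of care is checking that the convergence-in-law hypothesis of Lemma \ref{approximationstopping} is truly met by the discretization, i.e.\ that the right-continuity of $(Y_{t})$ is what converts the monotone pointwise convergence $\taur[j]\searrow\taur$ into distributional convergence of the stopped values; but this is precisely what Lemma \ref{discretize}(ii) records, so no additional argument is required.
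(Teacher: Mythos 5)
Your proposal is correct and follows essentially the same route as the paper: the paper disposes of this corollary in one sentence, obtaining it by combining Lemma \ref{approximationstopping} with the convergence-in-law of $Y^{r}_{\taur[j]}$ to $Y^{r}_{\taur}$ supplied by the discretization lemma (recorded in Lemma \ref{discretize}(ii), whose kernel-based justification is what the paper's reference to Lemma \ref{stopped distribution} points at). Your verification that the hypotheses of Lemma \ref{approximationstopping} are met is exactly the intended argument, so nothing further is needed.
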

The following result provides the remaining missing link to prove Proposition \ref{derandomize2}.
\begin{lemma}
\label{missinglink}
Let (\ref{Annahmen Young function}) be fulfilled. Furthermore, let $\taur\in\cTr,$ and let us for any $j\in\N$ denote by $\cT[j]$ the set containing all nonrandomized stopping times from $\cT$ taking  values in 
the set $\{(k/2^{j})\wedge T\mid k\in\N\}$ with probability \(1.\) If $(\Omega,\cF_{t},\pr|_{\cF_{t}})$ is atomless with countably generated $\cF_{t}$ for every $t > 0,$ and if $Y_{t}\in {\cal X}_{g}$ for $t > 0,$ then there exists some sequence 
$(\tau_{jn})_{n\in\N}$ in $\cT[j]$ such that $(Y_{\tau_{jn}})_{n\in\N}$ converges in law to 
$Y^{r}_{\taur[j]}.$ In particular
\begin{eqnarray}
\label{tauj_minimax}
\cE^{g}(Y^{r}_{\taur[j]}) 
\leq 
\sup_{\tau\in\cT[j]}\cE^{g}(Y_{\tau}).
\end{eqnarray}
\end{lemma}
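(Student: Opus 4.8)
The plan is to exhibit the genuine stopping times $\tau_{jn}$ by discretising the mass that the randomised stopping time $\taur[j]$ distributes over the (finite) dyadic grid and then rounding the resulting measurable partition of unity to indicators. First I would note that, since $T<\infty$, the time $\taur[j]$ takes values in the finite grid $t_{1j}<t_{2j}<\dots<t_{Nj}=T$ with $t_{kj}=(k/2^{j})\wedge T$, all of which are \emph{strictly positive}; by construction the whole $\taur$-mass lying in $[0,t_{1j}]$ is reassigned to $t_{1j}$. Accordingly I would introduce the stopping masses
\[
p_{1}\doteq K_{\taur}(\cdot,[0,t_{1j}]),\qquad p_{k}\doteq K_{\taur}(\cdot,\,]t_{(k-1)j},t_{kj}])\quad(k=2,\dots,N),
\]
which by the measurability property of the kernel are $\cF_{t_{kj}}$-measurable, nonnegative and satisfy $\sum_{k}p_{k}=K_{\taur}(\cdot,[0,T])=1$. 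Thus $(p_{k})_{k}$ is a measurable partition of unity adapted to the filtration $(\cF_{t_{kj}})_{k}$, and --- this is the decisive point --- every $\cF_{t_{kj}}$ is atomless and countably generated because $t_{kj}>0$. Lemma \ref{discretize}(iii) then rewrites the target law as $F_{Y^{r}_{\taur[j]}}(x)=\sum_{k=1}^{N}\ex[\,p_{k}\,\widehat{Y}^{x}_{t_{kj}}\,]$, $x\in\R$.

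The heart of the argument is to replace the fractional masses $p_{k}$ by indicators. Here I would invoke the approximation result for measurable partitions of unity (Proposition \ref{Dichtheit} in Appendix \ref{AppendixC}): using that each $(\Omega,\cF_{t_{kj}},\pr|_{\cF_{t_{kj}}})$ is atomless with $\cF_{t_{kj}}$ countably generated, there exist for every $n\in\N$ sets $A_{kn}\in\cF_{t_{kj}}$ that partition $\Omega$ and satisfy $\eins_{A_{kn}}\to p_{k}$ weakly in $L^{1}$ as $n\to\infty$, for each $k$. The partition $(A_{kn})_{k}$ defines a nonrandomised stopping time $\tau_{jn}\in\cT[j]$ through $\tau_{jn}\doteq t_{kj}$ on $A_{kn}$; this is legitimate since $A_{ln}\in\cF_{t_{lj}}\subseteq\cF_{t}$ gives $\{\tau_{jn}\leq t\}=\bigcup_{t_{lj}\leq t}A_{ln}\in\cF_{t}$. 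As each $\widehat{Y}^{x}_{t_{kj}}$ is bounded, the weak $L^{1}$-convergence yields $\ex[\eins_{A_{kn}}\widehat{Y}^{x}_{t_{kj}}]\to\ex[p_{k}\widehat{Y}^{x}_{t_{kj}}]$, and summing the finitely many terms gives
\[
F_{Y_{\tau_{jn}}}(x)=\sum_{k=1}^{N}\ex\bigl[\eins_{A_{kn}}\widehat{Y}^{x}_{t_{kj}}\bigr]\xrightarrow[n\to\infty]{}F_{Y^{r}_{\taur[j]}}(x)\qquad\text{for every }x\in\R,
\]
so that $(Y_{\tau_{jn}})_{n}$ converges in law to $Y^{r}_{\taur[j]}$.

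For the concluding inequality \eqref{tauj_minimax} I would run the dominated-convergence argument of Lemma \ref{approximationstopping}, but with the finite-grid maximum $M_{j}\doteq\max_{1\leq k\leq N}Y_{t_{kj}}$ playing the role of $Y^{*}$. Since each $Y_{t_{kj}}\in\cX_{g}$ and $\cX_{g}$ is a vector space closed under $|\cdot|$ and under domination, we have $M_{j}\in\cX_{g}$; moreover $F_{Y_{\tau_{jn}}}\geq F_{M_{j}}$ and $F_{Y^{r}_{\taur[j]}}\geq F_{M_{j}}$, so $g(1-F_{Y_{\tau_{jn}}}(\cdot))\leq g(1-F_{M_{j}}(\cdot))$ is a fixed majorant with finite integral over $]0,\infty[$. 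Continuity of $g$ together with the convergence in law then gives $\cE^{g}(Y_{\tau_{jn}})\to\cE^{g}(Y^{r}_{\taur[j]})$, whence $\cE^{g}(Y^{r}_{\taur[j]})=\lim_{n\to\infty}\cE^{g}(Y_{\tau_{jn}})\leq\sup_{\tau\in\cT[j]}\cE^{g}(Y_{\tau})$.

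The main obstacle is the rounding step: one must replace the randomised masses $p_{k}$ by indicator sets $A_{kn}$ that simultaneously (i) lie in the correct $\sigma$-algebras $\cF_{t_{kj}}$, so that the resulting $\tau_{jn}$ is adapted, (ii) partition $\Omega$, so that a single stopping time is defined, and (iii) reproduce the mixed law in the weak limit. Securing all three at once is exactly what Proposition \ref{Dichtheit} delivers, and it is there that atomlessness and countable generation of the $\cF_{t_{kj}}$ are essential; a naive independent rounding of the individual $p_{k}$ would destroy either the partition property or the filtration-measurability. It is worth stressing that the reassignment of the $\taur$-mass on $[0,t_{1j}]$ to the strictly positive time $t_{1j}$ is precisely what keeps the whole construction on atomless $\sigma$-algebras and away from the trivial $\cF_{0}$.
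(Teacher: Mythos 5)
Your proposal is correct and follows essentially the same route as the paper's proof: Lemma \ref{discretize}(iii) to express $F_{Y^{r}_{\taur[j]}}$ through the kernel masses $Z_{k}$, Proposition \ref{Dichtheit} to round this adapted measurable partition of unity to adapted indicator partitions, the induced stopping times $\tau_{jn}=\sum_{k}t_{kj}\eins_{B_{kn}}\in\cT[j]$, pointwise convergence of distribution functions, and dominated convergence for \eqref{tauj_minimax}. The only deviations are minor: the paper additionally invokes Lemma \ref{BanachAlaoglu} and Proposition \ref{angelic} to upgrade the closure statement of Proposition \ref{Dichtheit} (a priori only net convergence in the non-metrizable product weak$\star$ topology) to the sequential convergence you assert directly, while your use of the finite-grid maximum $M_{j}=\max_{k}Y_{t_{kj}}\in\cX_{g}$ as dominating variable is in fact a slight sharpening of the paper's bare citation of Lemma \ref{approximationstopping}, since it only needs the hypothesis $Y_{t}\in\cX_{g}$ actually stated in the lemma rather than $Y^{*}\in\cX_{g}$.
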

\begin{proof}
Let $k_{j} \doteq \min\{k\in\N\mid k/2^{j}\geq T\}.$ If $k_{j} = 1,$ then the statement of 
Lemma \ref{missinglink} is obvious. So let us assume $k_{j}\geq 2,$ and set $t_{kj} \doteq (k/2^{j})\wedge T.$ We already know from Lemma \ref{discretize} that 
\begin{equation}
\label{eins}
\hspace*{-0.25cm}F_{Y^{r}_{\taur[j]}}(x) = \ex\left[\widehat{Y}_{t_{1j}}^{x} K_{\taur}(\cdot,[0,t_{1j}])\right] 
+ \sum\limits_{k=2}^{k_{j}}\ex\left[\widehat{Y}_{t_{kj}}^{x} K_{\taur}(\cdot,]t_{(k-1)j},t_{kj}])\right]
\end{equation}
holds for any $x\in\R.$ Here $\widehat{Y}^{x}_{t} \doteq \eins_{]-\infty,x]}\circ Y_{t}$ for $t\in [0,T].$ 
Next
$$
Z_{k} \doteq 
\bcswitch
K_{\taur}(\cdot,[0, t_{1j}])&k = 1\\
K_{\taur}(\cdot,]t_{(k-1) j},t_{kj}])&k\in\{2,...,k_{j}\}
\ecswitch
$$
defines a random variable on $(\Omega,\cF_{t_{kj}},\pr_{|\cF_{t_{kj}}})$ which satisfies 
$0\leq Z_{k}\leq 1$ $\pr-$a.s.. In addition, we may observe that $\sum_{k=1}^{k_{j}}Z_{k} = 1$ holds $\pr-$a.s.. Since the probability spaces 
$\OFPk$ $(k=1,\dots,k_{j})$ are assumed to be atomless and countably generated, we may draw on Proposition \ref{Dichtheit} (cf. Appendix \ref{AppendixC}) along with Lemma 
\ref{BanachAlaoglu} (cf. Appendix \ref{AppendixC}) and Proposition \ref{angelic} (cf. Appendix \ref{AppendixA}) to find a sequence 
$\big((B_{1n},\dots,B_{k_{j}n})\big)_{n\in\N}$ in $\Timesk\cF_{t_{kj}}$ such that $B_{1n},\dots,B_{k_{j}n}$ is a partition of $\Omega$ for $n\in\N,$ and
$$
\lim_{n\to\infty}\ex\left[\eins_{B_{kn}}\cdot f\right] = \ex\left[Z_{k}\cdot f\right]
$$
holds for $\pr_{|\cF_{t_{kj}}}-$integrable random variables $f$ and $k\in\{1,\dots,k_{j}\}.$ In particular we have by (\ref{eins}) 
$$
F_{Y^{r}_{\taur[j]}}(x) =
\lim\limits_{n\to\infty} \sum\limits_{k=1}^{k_{j}}\ex\left[\widehat{Y}_{t_{kj}}^{x} 
\eins_{B_{kn}}\right]\, \mbox{for}\, x\in\R.
$$
We can define a sequence $(\tau_{jn})_{n\in\N}$ of nonrandomized stopping times from $\cT[j]$ via
$$
\tau_{jn} \doteq \sum\limits_{k=1}^{k_{j}}t_{kj}\, \eins_{B_{kn}}.
$$
The distribution function $F_{Y_{\tau_{jn}}}$ of $Y_{\tau_{jn}}$ satisfies
$$
F_{Y_{\tau_{jn}}}(x) = 
\sum\limits_{k=1}^{k_{j}}\ex\left[\widehat{Y}_{t_{kj}}^{x} 
\eins_{B_{kn}}\right]\, \mbox{for}\, x\in\R
$$
so that $F_{Y^{r}_{\taur[j]}}(x) = \lim_{n\to\infty}F_{Y_{\tau_{jn}}}(x)$ for $x\in\R.$
\par

The remaining part of Lemma \ref{missinglink} follows from Lemma \ref{approximationstopping}.

\end{proof}
Now, we are ready to prove Proposition \ref{derandomize2}. Putting Lemma \ref{missinglink} and Corollary \ref{discretizedstop} 
together we have 
\begin{eqnarray*}
\sup_{\tau\in\cT}\cE^{g}(Y_{\tau})\geq \sup_{\taur\in\cT^{r}}\lim_{j\to\infty}\cE^{g}(Y^{r}_{\taur[j]}) = \sup_{\taur\in\cT^{r}}\cE^{g}(Y^{r}_{\taur}),
\end{eqnarray*}
and thus
\begin{eqnarray*}
\sup_{\taur\in\cT}\cE^{g}(Y^{r}_{\taur})
\geq 
\sup_{\tau\in\cT}\cE^{g}(Y_{\tau})
\geq 
\sup_{\taur\in\cT}\cE^{g}(Y^{r}_{\taur}),
\end{eqnarray*}
completing the proof of  Proposition \ref{derandomize2}.
\hfill$\Box$

 \subsection{Proof of Theorem \ref{new_representation}}
\label{beweis new dual representation}
Firstly, we get from  Propositions \ref{minimax} and \ref{derandomize2} along with 
\eqref{Hilfsstoppproblem2}
\begin{eqnarray*}
\inf_{Z\in L^{1}_{+}(\mu_{g}) \atop Z(1) = 0}\sup_{\taur\in\cTr} 
\ex\left[\int\left[\frac{(Y^{r}_{\taur} - Z(\alpha))^{+}}{\alpha} + Z(\alpha)\right]~\mu_{g}(d\alpha)\right]
&=&
\sup_{\taur\in\cTr}\cE^{g}(Y^{r}_{\taur})\\
&=& 
\sup_{\tau\in\cT}\cE^{g}(Y_{\tau}). 
\end{eqnarray*}
Furthermore in view of \eqref{Hilfsstoppproblem}, 
\begin{eqnarray*}
\sup_{\tau\in\cT}\cE^{g}(Y_{\tau}) 
&\leq&
\inf_{Z\in L^{1}_{+}(\mu_{g}) \atop Z(1) = 0}\sup_{\tau\in\cT} 
\ex\left[\int\left[\frac{(Y_{\tau} - Z(\alpha))^{+}}{\alpha} + Z(\alpha)\right]~\mu_{g}(d\alpha)\right]\\
&\leq&
\inf_{Z\in L^{1}_{+}(\mu_{g}) \atop Z(1) = 0}\sup_{\taur\in\cTr} 
\ex\left[\int\left[\frac{(Y^{r}_{\taur} - Z(\alpha))^{+}}{\alpha} + Z(\alpha)\right]~\mu_{g}(d\alpha)\right].
\end{eqnarray*}
Thus 
\begin{eqnarray}
\label{fastEnde}\notag
\sup_{\tau\in\cT}\cE^{g}(Y_{\tau}) 
&=&
\inf_{Z\in L^{1}_{+}(\mu_{g}) \atop Z(1) = 0}\sup_{\tau\in\cT} 
\ex\left[\int\left[\frac{(Y_{\tau} - Z(\alpha))^{+}}{\alpha} + Z(\alpha)\right]~\mu_{g}(d\alpha)\right]\\ \notag
&\leq&
\inf_{Z\in L^{1}_{+}(\mu_{g},Z^{o}) \atop Z(1) = 0}\sup_{\tau\in\cT} 
\ex\left[\int\left[\frac{(Y_{\tau} - Z(\alpha))^{+}}{\alpha} + Z(\alpha)\right]~\mu_{g}(d\alpha)\right]\\ 
&\leq& 
\inf_{Z\in \cL^{*}_{\cZ}(\mu_{g},Z^{o})}\sup_{\tau\in\cT} 
\ex\left[\int\left[\frac{(Y_{\tau} - Z(\alpha))^{+}}{\alpha} + Z(\alpha)\right]~\mu_{g}(d\alpha)\right],
\end{eqnarray}
where the inequalities follow from 
$$
\cL^{*}_{\cZ}(\mu_{g},Z^{o})\subseteq \{Z\in L^{1}_{+}(\mu_{g},Z^{o})\mid Z(1) = 0\}\subseteq\{Z\in L^{1}_{+}(\mu_{g}) \mid Z(1) = 0\}.
$$ 
\medskip

Now, let $Z\in L^{1}_{+}(\mu_{g})$ with $Z(1) = 0,$ and $\varepsilon > 0.$ Then $\hZ_{k} \doteq \eins_{]0,1/k]}\cdot Z^{0} + Z$ defines a sequence $(\hZ_{k})_{k\in\N}$ in $L^{1}_{+}(\mu_{g})$ satisfying $\hZ_{k}\geq Z$ $\mu_{g}-$a.s. for $k\in\N,$ and thus 
$$
\sup_{\tau\in\cT}\ex\left[U^{g,\hZ_{k}}_{\tau}\right]
\leq
\sup_{\tau\in\cT}\ex\left[U^{g,Z}_{\tau}\right] + \int\eins_{]0,1/k]}\cdot Z^{o}~d\mu_{g}. 
$$
Since $Z^{0}\in L^{1}_{+}(\mu_{g})$ by 
assumption, 
the application of the dominated convergence theorem yields $\int\eins_{]0,1/k]}\cdot Z^{0}~d\mu_{g}\to 0,$ and hence
\begin{equation}
\label{erste Stufe}
\sup_{\tau\in\cT}\ex\left[U^{g,\hZ_{k_{0}}}_{\tau}\right]
\leq
\sup_{\tau\in\cT}\ex\left[U^{g,Z}_{\tau}\right] + \frac{\varepsilon}{2}\quad\mbox{for some}~k_{0}\in\N.
\end{equation}
Morever, by assumption, we may find some sequence $(Z_{n})_{n\in\N}$ in $\cZ$ such that 
$\int |Z_{n} - Z|~d\mu_{g}\to 0$ as well as $Z_{n}\to Z$ $\mu_{g}-$a.s.. Then $\tZ_{k_{0},n} \doteq \eins_{]0,1/k_{0}]}\cdot Z^{o} + Z_{n}$ defines a sequence $(\tZ_{k_{0},n})_{n\in\N}$ in $\cL^{*}_{\cZ}(\mu_{g},Z^{o})$ satisfying
\begin{equation}
\label{Hilfskonvergenz}
\int |\tZ_{k_{0},n} - \hZ_{k_{0}}|~d\mu_{g}\to 0\quad\mbox{and}\quad 
\tZ_{k_{0},n}\to \hZ_{k_{0}}~\mu_{g}-\mbox{a.s.}.
\end{equation}
We have by Tonelli's theorem
\begin{eqnarray*}
&&
\left|~\sup_{\tau\in\cT}\ex\big[U^{g,\tZ_{k_{0}},n}_{\tau}\big]
-
\sup_{\tau\in\cT}\ex\big[U^{g,\hZ_{k_{0}}}_{\tau}\big]\right| \\
&\leq& 
\sup_{\tau\in\cT}\int\frac{1}{\alpha}\cdot\big|\ex\big[(Y_{\tau} - \tZ_{k_{0},n}(\alpha))^{+}\big]- \ex\big[(Y_{\tau} - \hZ_{k_{0}}(\alpha))^{+}\big]\big|~\mu_{g}(d\alpha)\\
&&+ ~\int |\tZ_{k_{0},n} - \hZ_{k_{0}}|~d\mu_{g}\\
&=&
\sup_{\tau\in\cT}\int\frac{1}{\alpha}\cdot\left|\int_{\hZ_{k_{0}}(\alpha)}^{\tZ_{k_{0},n}(\alpha)} (1 - F_{Y_{\tau}}(x))~dx\right|~\mu_{g}(d\alpha) + \int|\tZ_{k_{0},n} - \hZ_{k_{0}}|~d\mu_{g}\\
&\leq&
\int\frac{1}{\alpha}\cdot\left|\int_{\hZ_{k_{0}}(\alpha)}^{\tZ_{k_{0},n}(\alpha)} (1 - F_{Y^{*}}(x))~dx\right|~\mu_{g}(d\alpha) + \int|\tZ_{k_{0},n} - \hZ_{k_{0}}|~d\mu_{g}.
\end{eqnarray*}
Here $F_{Y_{\tau}}$ and $F_{Y^{*}}$ denote respectively the distribution functions of 
$Y_{\tau}$ and $Y^{*}.$ Moreover, $\int_{0}^{\infty}(1 - F_{Y^{*}}(x))~dx < \infty$ because $Y^{*}\in\cX_{g}.$ Then in view of \eqref{Hilfskonvergenz} we obtain 
$$
\frac{1}{\alpha}\cdot\left|\int_{\hZ_{k_{0}}(\alpha)}^{\tZ_{k_{0},n}(\alpha)} (1 - F_{Y^{*}}(x))~dx\right|\leq\frac{1}{\alpha}~|\widetilde{Z}_{k_{0},n}(\alpha) - \widehat{Z}_{k_{0}}(\alpha)|\to 0\quad\mbox{for}~\mu_{g}-\mbox{almost all}~\alpha\in ]0,1].
$$
In addition
\begin{eqnarray*}
&&
\frac{1}{\alpha}\cdot\left|\int_{\hZ_{k_{0}}(\alpha)}^{\tZ_{k_{0},n}(\alpha)} (1 - F_{Y^{*}}(x))~dx\right|\\ 
&=& 
\frac{\eins_{]0,k_{0}]}(\alpha)}{\alpha}\cdot\Big|\int_{Z^{o}(\alpha) + \hZ_{k_{0}}(\alpha)}^{Z^{o}(\alpha) + \tZ_{k_{0},n}(\alpha)} (1 - F_{Y^{*}}(x))~dx\Big| 
+ 
\frac{\eins_{]k_{0},1]}(\alpha)}{\alpha}\cdot\Big|\int_{\hZ_{k_{0}}(\alpha)}^{\tZ_{k_{0},n}(\alpha)} (1 - F_{Y^{*}}(x))~dx\Big| \\[0.1cm]
&\leq& 
\frac{\eins_{]0,k_{0}]}(\alpha)}{\alpha}\cdot\Big|\int_{Z^{o}(\alpha)}^{\infty} (1 - F_{Y^{*}}(x))~dx\Big| 
+ 
\frac{\eins_{]k_{0},1]}(\alpha)}{k_{0}}\cdot\Big|\int_{0}^{\infty} (1 - F_{Y^{*}}(x))~dx\Big|\\[0.1cm]
&=&
\eins_{]0,k_{0}]}(\alpha)\cdot\frac{\ex\left[(Y^{*} - Z^{o}(\alpha))^{+}\right]}{\alpha} 
+ 
\eins_{]k_{0},1]}(\alpha)\cdot\frac{\ex\left[Y^{*}\right]}{k_{0}}
\end{eqnarray*}
Drawing on assumptions on $Z^{o},$ the application of Tonelli's theorem yields
$$
\int \frac{\ex\left[(Y^{*} - Z^{o}(\alpha))^{+}\right]}{\alpha}~d\mu_{g} 
=
\ex\left[\int\frac{(Y^{*} - Z^{o}(\alpha))^{+}}{\alpha}~\mu_{g}(d\alpha)\right] < \infty
$$
so that 
$$
\int \left(\eins_{]0,k_{0}[}(\alpha)\cdot\frac{\ex\left[(Y^{*} - Z^{o}(\alpha))^{+}\right]}{\alpha} 
+ 
\eins_{[k_{0},1]}(\alpha)\cdot\frac{\ex\left[Y^{*}\right]}{k_{0}}
\right)~\mu_{g}(d\alpha) < \infty.
$$
Hence we may apply the dominated convergence theorem to conclude 
$$
\lim_{n\to\infty}\int\frac{1}{\alpha}\cdot\left|\int_{\hZ_{k_{0}}(\alpha)}^{\tZ_{k_{0},n}(\alpha)} (1 - F_{Y^{*}}(x))~dx\right|~ d\mu_{g} = 0,
$$
and thus in view of \eqref{Hilfskonvergenz}
$$
\lim_{n\to\infty}\left|~\sup_{\tau\in\cT}\ex\big[U^{g,\tZ_{k_{0},n}}_{\tau}\big] 
- \sup_{\tau\in\cT}\ex\big[U^{g,\hZ_{k_{0}}}_{\tau}\big]\right| = 0.
$$
In particular 
$$
\sup_{\tau\in\cT}\ex\big[U^{g,\tZ_{k_{0},n_{0}}}_{\tau}\big] 
\leq \sup_{\tau\in\cT}\ex\big[U^{g,\hZ_{k_{0}}}_{\tau}\big] + \frac{\varepsilon}{2}\quad\mbox{for some}~n_{0}\in\N,
$$
and then by \eqref{erste Stufe}
$$
\sup_{\tau\in\cT}\ex\big[U^{g,\tZ_{k_{0}, n_{0}}}_{\tau}\big] 
\leq \sup_{\tau\in\cT}\ex\big[U^{g,Z}_{\tau}\big] + \varepsilon.
$$
Therefore we have shown
$$
\inf_{\tZ\in \cL^{*}_{\cZ}(\mu_{g},Z^{o})}\sup_{\tau\in\cT}\ex\big[U^{g,\tZ}_{\tau}\big] 
\leq \sup_{\tau\in\cT}\ex\big[U^{g,Z}_{\tau}\big],
$$
meaning 
\begin{eqnarray*}
\inf_{Z\in L^{1}_{+}(\mu_{g},Z^{o})\atop Z(1) = 0}\sup_{\tau\in\cT}\ex\big[U^{g,Z}_{\tau}\big] 
&\stackrel{\eqref{fastEnde}}{\leq}&
\inf_{\tZ\in \cL^{*}_{\cZ}(\mu_{g},Z^{o})}\sup_{\tau\in\cT}\ex\big[U^{g,\tZ}_{\tau}\big]
\\ 
&\leq& 
\inf_{Z\in L^{1}_{+}(\mu_{g})\atop Z(1) = 0}\sup_{\tau\in\cT}\ex\big[U^{g,Z}_{\tau}\big]
\end{eqnarray*}
because $Z$ was chosen arbitrarily. This completes the proof of Theorem \ref{new_representation} due to \eqref{fastEnde}. 
\hfill$\Box$
\subsection{Proof of Theorem \ref{boundedcashflow}}
\label{proofboundedcashflow}
Let $\widehat{\cZ} \doteq \{Z\in C_{u}(]0,1])\mid Z\geq 0, Z(1) = 0\}.$
In view of Lemma \ref{ApproximationStetigkeit} (cf. Appendix \ref{AppendixD}), $\widehat{\cZ}$ is a dense subset of $\{Z\in L^{1}_{+}(\mu_{g})\mid Z(1) = 0\}$ w.r.t. the $L^{1}-$norm generated by $\mu_{g}$. Since $\cZ$ is a dense subset of $\widehat{\cZ}$ w.r.t. the supremum norm it is also a dense subset of $\{Z\in L^{1}_{+}(\mu_{g})\mid Z(1) = 0\}$ w.r.t. the $L^{1}-$norm generated by $\mu_{g}$. If $Y^{*}$ is $\mu_{g}-$essentially bounded, then $Y^{*}\in\cX_{g}$ and $Z_{\delta} \doteq \delta\eins_{]0,1[}\in L^{1}[Y^{*},\mu_{g}]$ for 
$\delta\geq |Y^{*}|_{\infty}.$ Then by Theorem \ref{new_representation}
\begin{equation}
\label{endup}
\sup_{\tau\in\cT} \cE^{g}(Y_{\tau}) 
= 
\inf_{Z\in L^{1}_{+}(\mu_{g},Z^{o}_{\delta})\atop Z(1) = 0}\sup\limits_{\tau\in\cT} 
\ex\left[U^{g,Z}_{\tau}\right] 
= 
\inf_{Z\in L^{*}_{\widehat{\cZ}}(\mu_{g})}\sup\limits_{\tau\in\cT} 
\ex\left[U^{g,Z}_{\tau}\right],
\end{equation}
where 
$L^{*}_{\widehat{\cZ}}(\mu_{g}) \doteq \{\eins_{]0,a]}\cdot \delta + Z\mid Z\in \widehat{\cZ}, a\in ]0,1[\}.$
\medskip

Now let us fix any $\delta\geq |Y^{*}|_{\infty},$ and let $Z\in\widehat{\cZ}$ as well as 
$a\in ]0,1[.$ Then 
$$
f_{n} \doteq 
\bcswitch
\delta&0 < \alpha < a\\
n(a - \alpha) + 1&a\leq \alpha < a + \frac{1}{n}\\
0&a + \frac{1}{n}\leq\alpha \leq 1
\ecswitch
$$
defines an antitone sequence in $\cC_{u}(]0,1])$ with $f_{n}\searrow \eins_{]0,a]}.$ Then 
$(\delta f_{n} + Z)_{n\in\N}$ is an antitone sequence in $\widehat{\cZ}$ satisfying 
$\tZ_{n} \doteq (\delta + 1) f_{n} + Z\searrow \tZ,$ where $\tZ \doteq \eins_{]0,a]}\cdot Z^{o}_{\delta} + Z.$ We may observe
$$
\sup_{\tau\in\cT}\ex\left[U^{g,\tZ_{n}}_{\tau}\right] 
= 
\sup_{\tau\in\cT}\ex\left[\int \frac{\eins_{]a,1]}(\alpha)(Y_{\tau} - \tZ_{n}(\alpha))^{+}}{\alpha}~\mu_{g}(d\alpha)\right] + \int \tZ_{n}~d\mu_{g},
$$
and 
$$
\sup_{\tau\in\cT}\ex\left[U^{g,\tZ}_{\tau}\right] 
= 
\sup_{\tau\in\cT}\ex\left[\int \frac{\eins_{]a,1]}(\alpha)(Y_{\tau} - \tZ(\alpha))^{+}}{\alpha}~\mu_{g}(d\alpha)\right] + \int \tZ~d\mu_{g}.
$$
Hence 
\begin{eqnarray}
\label{direktUngleichung} \notag
&&
\left|~\sup_{\tau\in\cT}\ex\left[U^{g,\tZ_{n}}_{\tau}\right] - 
\sup_{\tau\in\cT}\ex\left[U^{g,\tZ}_{\tau}\right]\right|\\ \notag
&\leq&
\sup_{\tau\in\cT}\ex\left[\int \frac{\eins_{]a,1]}(\alpha) | (Y_{\tau} - \tZ_{n}(\alpha))^{+} - (Y_{\tau} - \tZ(\alpha))^{+}|}{\alpha}~\mu_{g}(d\alpha)\right]\\ \notag 
&&
+ 
\int |\tZ_{n} - \tZ|~d\mu_{g}\\ \notag
&\leq& 
\int \frac{\eins_{]a,1]}(\alpha) | \tZ_{n}(\alpha) - \tZ(\alpha)|}{\alpha}~\mu_{g}(d\alpha) + 
\int |\tZ_{n} - \tZ|~d\mu_{g}\\
&\leq&
\frac{1 + a}{a}\int |\tZ_{n} - \tZ|~d\mu_{g}.
\end{eqnarray}
We have $\int |\tZ_{n} - \tZ|~d\mu\to 0$ due to dominated convergence theorem so that
\begin{eqnarray*}
\left|~\sup_{\tau\in\cT}\ex\left[U^{g,\tZ_{n}}_{\tau}\right]
-
\sup_{\tau\in\cT}\ex\left[U^{g, \tZ}_{\tau}\right]\right|\to 0.
\end{eqnarray*}
Hence for arbitrary $\varepsilon$ there is some $n_{0}\in\N$ such that
\begin{equation}
\label{Vorbereitung1}
\sup_{\tau\in\cT}\ex\left[U^{g,\tZ_{n_{0}}}_{\tau}\right] 
\leq 
\sup_{\tau\in\cT}\ex\left[U^{g,\tZ}_{\tau}\right] + \frac{\varepsilon}{2}.
\end{equation}
Next, by assumption on $\cZ$ there is some sequence $(Z_{k})_{k\in\N}$ which converges to $\tZ_{n_{0}}$ w.r.t. the supremum norm. In particular 
$$
0 
= 
\lim_{k\to\infty}\sup_{\alpha\in ]0,a]}|Z_{k}(\alpha) - \tZ_{n_{0}}(\alpha)| 
= 
\lim_{k\to\infty}\sup_{\alpha\in ]0,a]}|Z_{k}(\alpha) - (\delta + 1)|. 
$$
This means that $\inf_{\alpha\in ]0,a]}Z_{k}(\alpha)\geq\delta$ for large $k.$ So we may assume without loss of generality that $\inf_{\alpha\in ]0,a]}Z_{k}(\alpha)\geq\delta$ holds for every $k\in\N,$ implying that$(Z_{k})_{k\in\N}$ is a sequence in 
$L^{1}_{+}(\mu_{g},Z_{\delta})\cap\cZ\subseteq\cZ^{\delta}.$ 
Then in the same way as in \eqref{direktUngleichung} we obtain
$$
\left|~\sup_{\tau\in\cT}\ex\left[U^{g,Z_{k}}_{\tau}\right] - 
\sup_{\tau\in\cT}\ex\left[U^{g,\tZ_{n_{0}}}_{\tau}\right]\right|\\ \notag
\leq
\frac{1 + a}{a}\int |Z_{k} - \tZ_{n_{0}}|~d\mu_{g}.
$$
Since $\int |Z_{k} - \tZ_{n_{0}}|~d\mu_{g}\leq 
\sup_{\alpha\in ]0,1]}|Z_{k}(\alpha) - \tZ_{n_{0}}(\alpha)|\to 0,$ we may find some 
$k_{0}\in\N$ such that
$$
\sup_{\tau\in\cT}\ex\left[U^{g,Z_{k_{0}}}_{\tau}\right] 
\leq 
\sup_{\tau\in\cT}\ex\left[U^{g,\tZ_{n_{0}}}_{\tau}\right] + \frac{\varepsilon}{2}.
$$
So we may conclude 
$$
\sup_{\tau\in\cT}\ex\left[U^{g,Z_{k_{0}}}_{\tau}\right] 
\leq 
\sup_{\tau\in\cT}\ex\left[U^{g,\tZ}_{\tau}\right] + \varepsilon,
$$
and thus 
$$
\sup_{\tau\in\cT}\ex\left[U^{g,\tZ}_{\tau}\right]
\geq 
\inf_{\tZ\in\cZ^{\delta}}\sup_{\tau\in\cT}\ex\left[U^{g,\tZ}_{\tau}\right]. 
$$
As $\delta\geq |Y^{*}|_{\infty}$, $a\in ]0,1[$ and $Z\in\cZ$ were chosen arbitrarily, and since 
$L^{1}_{+}(\mu_{g},Z_{\delta})\cap\cZ$ is a subset of $\{Z\in L^{1}_{+}(\mu_{g})\mid Z(1) = 0\},$ we may derive from \eqref{endup} immediately,
$$
\sup_{\tau\in\cT}\cE^{g}(Y_{\tau}) = \inf_{\tZ\in\cZ^{\delta}}\sup_{\tau\in\cT}\ex\left[U^{g,\tZ}_{\tau}\right]. 
$$
This completes the proof. 
\hfill$\Box$

\subsection{Proof of Theorem \ref{new_dualrepresentation}}
\label{proofdualrepresentation}
First of all, notice that for $Z\in L^{1}[Y^{*},\mu_{g}]$ the process $V^{g,Z}$ is nothing else but the Snell-envelope w.r.t. to the stochastic process $(U_{t}^{g,Z})_{t\in [0,T]}$ defined by \eqref{Uprocess}. 

If $Z\in L^{1}_{+}(\mu_{g})$ has the property that 
$\int\frac{(Y^{*}- Z(\alpha))^{+}}{\alpha}~\mu_{g}(d\alpha)$ is integrable of order $p$ for some $p\in [1,\infty[,$ then $\sup_{t\in [0,T]}|U_{t}^{g,Z}|$ is also integrable of order $p$ due to 
$$
\sup_{t\in [0,T]}\left|U_{t}^{g,Z}\right| 
\leq 
\int\frac{(Y^{*}- Z(\alpha))^{+}}{\alpha}~\mu_{g}(d\alpha) + \int Z~d\mu_{g}.
$$
We obtain in addition for any $\tZ\in L^{1}_{+}(\mu_{g},Z)$
\begin{eqnarray*}
&&
\ex\left[\left(\int\frac{(Y^{*}- \tZ(\alpha))^{+}}{\alpha}~\mu_{g}(d\alpha)\right)^{p}~\right]\\ 
&\leq& 
\ex\left[\left(\int\eins_{]0,a]}\frac{(Y^{*}- Z(\alpha))^{+}}{\alpha}~\mu_{g}(d\alpha)\right)^{p}~\right]\\ 
&&
+ 
\ex\left[\left(\int\eins_{]a,1]}\frac{(Y^{*}- \tZ(\alpha))^{+}}{\alpha}~\mu_{g}(d\alpha)\right)^{p}~\right]\\
&\leq&
\ex\left[\left(\int\frac{(Y^{*}- Z(\alpha))^{+}}{\alpha}~\mu_{g}(d\alpha)\right)^{p}~\right] 
+ 
\frac{1}{a}\ex\left[Y^{*}\right] < \infty,
\end{eqnarray*}
where $a\in ]0,1[$ such that $\inf_{\alpha\in ]0,a]}(\tZ(\alpha) - Z(\alpha))\geq 0.$ In particular, using Tonelli's theorem,
\begin{eqnarray*}
\int \ex\left[\frac{(Y^{*}- \tZ(\alpha))^{+}}{\alpha}\right]~\mu_{g}(d\alpha)
&=&
\ex\left[\int\frac{(Y^{*}- \tZ(\alpha))^{+}}{\alpha}~\mu_{g}(d\alpha)\right]\\
&\leq&
\left(\ex\left[\left(\int\frac{(Y^{*}- \tZ(\alpha))^{+}}{\alpha}~\mu_{g}(d\alpha)\right)^{p}~\right]\right)^{\frac{1}{p}}\\ 
&<& 
\infty.
\end{eqnarray*}
Now the statement of Theorem \ref{new_dualrepresentation} follows immediately from Theorem \ref{new_representation} with Theorem \ref{dualrepresentation}.
\hfill$\Box$
\subsection{Proof of Proposition \ref{solution}}
\label{proof of solution}
Let us introduce the filtered probability space $\tOFFP$ defined by 
$$
\widetilde{\cF}_{t} 
=
\bcswitch
\cF_{t}&t\leq T\\
\cF_{T}&t > t.
\ecswitch 
$$
We shall denote by $\widetilde{\cT}^{r}$ the set of randomized stopping times according to $\tOFFP.$ Furthermore, we may extend the processes 
$(Y_{t})_{t\in [0,T]}$ and $(Y^{r}_{t})_{t\in [0,T]}$ to right-continuous processes 
$(\widetilde{Y}_{t})_{t\in [0,\infty]}$ and $(\widetilde{Y}^{r}_{t})_{t\in [0,T]}$ in the following way
$$
\widetilde{Y}_{t} 
=
\bcswitch
Y_{t}&t\leq T\\
Y_{T}&t > t.
\ecswitch 
\quad
\mbox{and}
\quad
\widetilde{Y}^{r}_{t} 
=
\bcswitch
Y^{r}_{t}&t\leq T\\
Y^{r}_{T}&t > t.
\ecswitch. 
$$
Recall that we may equip $\widetilde{\cT}^{r}$ with the so called Baxter-Chacon topology which is compact in general, and even metrizable within our setting because $\cF_{T}$ is assumed to be countably generated (cf. Theorem 1.5 in \cite{baxter1977compactness} and discussion afterwards). 
\medskip

In the following we shall denote for any $\tilde{\tau}^{r}\in \widetilde{\cT}^{r}$ the distribution function of $\widetilde{Y}^{r}_{\tilde{\tau}^{r}}$ by 
$F_{\widetilde{Y}^{r}_{\tilde{\tau}^{r}}},$ whereas $F_{Y^{*}}$ stands for the distribution function of $Y^{*}.$ Since $F_{\widetilde{Y}^{r}_{\tilde{\tau}^{r}}}(x)\leq F_{Y^{*}}(x)$ holds for every $\tilde{\tau}^{r}\in\widetilde{\cT}^{r}$ and arbitrary $x>0,$ we obtain from $Y^{*}\in\cX_{g},$
$$
\sup_{\widetilde{\tau}^{r}}\cE^{g}(\widetilde{Y}^{r}_{\tilde{\tau}^{r}})\leq 
\int_{0}^{\infty}g(1 - F_{Y^{*}}(x))~dx < \infty.
$$
\par 

Next, consider any sequence $(\tilde{\tau}^{r}_{n})_{n\in\N}$ in $\widetilde{\cT}^{r}$ such that $\big(\cE^{g}(\widetilde{Y}^{r}_{\tilde{\tau}^{r}_{n}})\big)_{n\in\N}$ converges to 
$\sup_{\widetilde{\tau}^{r}}\cE^{g}(\widetilde{Y}^{r}_{\tilde{\tau}^{r}}).$ We may select some subsequence $(\tilde{\tau}^{r}_{i(n)})_{n\in\N}$ which converges to some 
$\tilde{\tau}^{r}\in\widetilde{\cT}^{r}$ w.r.t. the Baxter-Chacon topology due to sequential compactness of $\widetilde{\cT}^{r}$ w.r.t. this topology. By assumption on $(Y_{t})_{t\in [0,T]}$ the processes $(\widetilde{Y}_{t})_{t\in [0,\infty]}$ and $(\widetilde{Y}^{r}_{t})_{t\in [0,T]}$ are quasi-left-continuous. Hence in view of \cite[Theorem 4.7]{edgar1982compactness}, the sequence $\big(F_{\widetilde{Y}^{r}_{\tilde{\tau}^{r}_{n}}}\big)_{n\in\N}$ of distribution functions associated with the sequence 
$\big(\widetilde{Y}^{r}_{\tilde{\tau}^{r}_{n}}\big)_{n\in\N}$ satisfies
$$
F_{\widetilde{Y}^{r}_{\tilde{\tau}^{r}_{n}}}(x)\to F_{\widetilde{Y}^{r}_{\tilde{\tau}^{r}}}(x)\quad\mbox{for all}~x > 0.
$$
Due to continuity of $g$ this means
$$
g\big(1 - F_{\widetilde{Y}^{r}_{\tilde{\tau}^{r}_{n}}}(x)\big)\to 
g\big(1 - F_{\widetilde{Y}^{r}_{\tilde{\tau}^{r}}}(x)\big)\quad\mbox{for all}~x > 0.
$$
Moreover, we have
$$
\sup_{n\in\N}g\big(1 - F_{\widetilde{Y}^{r}_{\tilde{\tau}^{r}_{n}}}(x)\big)\leq 
g\big(1 - F_{Y^{*}}(x)\big)\quad\mbox{for all}~x > 0.
$$
Since $Y^{*}\in\cX_{g},$ we may apply the dominated convergence theorem to conclude
\begin{eqnarray*}
\cE^{g}(\widetilde{Y}^{r}_{\tilde{\tau}^{r}}) 
=
\int_{0}^{\infty} g\big(1 - F_{\widetilde{Y}^{r}_{\tilde{\tau}^{r}}}(x)\big)~dx
&=&
\lim_{n\to\infty}\int_{0}^{\infty}g\big(1 - F_{\widetilde{Y}^{r}_{\tilde{\tau}^{r}_{n}}}(x)\big)~dx\\
&=&
\lim_{n\to\infty}\cE^{g}(\widetilde{Y}^{r}_{\tilde{\tau}^{r}_{n}}) 
= 
\sup_{\tilde{\tau}^{r}}\cE^{g}(\widetilde{Y}^{r}_{\tilde{\tau}^{r}}).
\end{eqnarray*}
This completes the proof because 
$\tilde{Y}^{r}_{\tilde{\tau}^{r}} = Y^{r}_{\tilde{\tau}^{r}\wedge T}$ and 
$\tilde{\tau}^{r}\wedge T$ belongs to $\cTr$ for every $\tilde{\tau}^{r}\in\widetilde{\cT}^{r}.$ 
\hfill$\Box$


\section{Appendix}
\label{AppendixAA}
\begin{lemma}
\label{optimizedcertaintyequivalent}
Let the distortion function $g$ be continuous and concave. Then 
\begin{eqnarray*}
\cE^{g}(X)
&=&  
\inf_{Z\in L^{1}_{+}(\mu_{g}) \atop Z(1) = 0} 
\ex\left[\int\left[\frac{(X - Z(\alpha))^{+}}{\alpha} + Z(\alpha)\right]~\mu_{g}(d\alpha)\right]\\ 
&=& 
\ex\left[\int\eins_{]0,1[}(\alpha)\left[\frac{(X - F^{\leftarrow}_{X}(1 - \alpha))^{+}}{\alpha} + F^{\leftarrow}_{X}(1 - \alpha)\right]~\mu_{g}(d\alpha)\right]\\ 
&&
+ \mu_{g}(\{1\})\ex[X] 
\end{eqnarray*}
holds for any nonnegative $X\in {\cal X}_{g},$ where $F^{\leftarrow}_{X}$ denotes the left-continuous quantile function of the distribution function $F_{X}$ of $X.$ 

\end{lemma}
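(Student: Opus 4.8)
The plan is to deduce \emph{both} equalities at once from the classical representation of $\cE^{g}$ as a $\mu_{g}$-average of Average Values at Risk, combined with the Rockafellar--Uryasev variational formula for each $AV@R_{\alpha}$. First I would record that, for nonnegative $X\in\cX_{g}$, one has $\cE^{g}(X)=\int_{0}^{\infty}g(1-F_{X}(x))\,dx<\infty$ directly from the definition of $\cX_{g}$, and that
\[
\cE^{g}(X)=\int_{0}^{1}AV@R_{\alpha}(-X)~\mu_{g}(d\alpha)=\int_{]0,1[}AV@R_{\alpha}(-X)~\mu_{g}(d\alpha)+\mu_{g}(\{1\})\,\ex[X],
\]
using $AV@R_{1}(-X)=\ex[X]$ to split off the atom of $\mu_{g}$ at $1$ (cf. \cite[Theorem 4.70 with Lemma 4.51]{FoellmerSchied2011}).

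The pointwise ingredient is the variational identity
\[
AV@R_{\alpha}(-X)=\min_{z\in\R}\ex\!\left[\frac{(X-z)^{+}}{\alpha}+z\right],\qquad \alpha\in\,]0,1[,
\]
whose minimum is attained at the constant $z=F^{\leftarrow}_{X}(1-\alpha)$; this follows by differentiating $z\mapsto \alpha^{-1}\ex[(X-z)^{+}]+z=\alpha^{-1}\int_{z}^{\infty}(1-F_{X}(x))\,dx+z$ and solving $1-F_{X}(z)=\alpha$. At the atom one observes that, because $X\ge 0$, the constraint $Z(1)=0$ already forces the integrand to take the value $\ex[X]=AV@R_{1}(-X)$, so the variational bound extends to $\alpha=1$ under this constraint.

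For the lower bound on the infimum I would invoke Tonelli's theorem: for any admissible $Z\in L^{1}_{+}(\mu_{g})$ with $Z(1)=0$,
\begin{align*}
\ex\!\left[\int\!\left[\frac{(X-Z(\alpha))^{+}}{\alpha}+Z(\alpha)\right]\mu_{g}(d\alpha)\right]
&= \int \ex\!\left[\frac{(X-Z(\alpha))^{+}}{\alpha}+Z(\alpha)\right]\mu_{g}(d\alpha)\\
&\ge \int AV@R_{\alpha}(-X)\,\mu_{g}(d\alpha)=\cE^{g}(X),
\end{align*}
the inequality being the pointwise variational bound applied for $\mu_{g}$-almost every $\alpha$. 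Taking the infimum over $Z$ yields ``$\ge$''. For the reverse inequality and attainment I would exhibit the explicit minimizer $Z^{*}\doteq\eins_{]0,1[}\,F^{\leftarrow}_{X}(1-\cdot)$: it is Borel measurable (monotone in $\alpha$), nonnegative (since $X\ge 0$), and satisfies $Z^{*}(1)=0$. Substituting $Z^{*}$ reproduces verbatim the explicit second expression of the statement, and by the first two displays this sum equals $\cE^{g}(X)$; hence the infimum is attained at $Z^{*}$ and both claimed equalities hold simultaneously.

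The main obstacle I anticipate is the bookkeeping at the atom $\alpha=1$ together with the integrability $Z^{*}\in L^{1}_{+}(\mu_{g})$: one must check that splitting the $\mu_{g}$-integral at $\{1\}$ is consistent with the constraint $Z(1)=0$, and that $\int_{]0,1[}F^{\leftarrow}_{X}(1-\alpha)\,\mu_{g}(d\alpha)<\infty$. The latter follows because the explicit expression is a sum of three nonnegative terms whose total equals the finite quantity $\cE^{g}(X)$ (finite since $X\in\cX_{g}$), so each term, in particular $\int_{]0,1[}F^{\leftarrow}_{X}(1-\alpha)\,\mu_{g}(d\alpha)$, is finite. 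I would emphasize that no general interchange-of-infimum-and-integral result (such as \cite[Theorem 14.60]{rockafellar1998variational}) is required here: the pointwise minimizer is available in closed form as the quantile $F^{\leftarrow}_{X}(1-\alpha)$, which is manifestly measurable, turning the delicate measurable-selection step into a direct substitution.
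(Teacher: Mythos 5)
The one genuine gap is at your very first step. You cite \cite[Theorem 4.70 with Lemma 4.51]{FoellmerSchied2011} for the identity
\begin{equation*}
\cE^{g}(X)=\int_{0}^{1}AV@R_{\alpha}(-X)~\mu_{g}(d\alpha),
\end{equation*}
but that result is a statement about $\pr$-essentially bounded random variables, whereas the lemma concerns arbitrary nonnegative $X\in\cX_{g}$, which may be unbounded (and this generality is exactly what the paper needs later, e.g.\ for $Y^{*}$). Both halves of your argument lean on this identity: the lower bound ends with $\int AV@R_{\alpha}(-X)\,\mu_{g}(d\alpha)=\cE^{g}(X)$, and the attainment step as well as the integrability of $\eins_{]0,1[}F^{\leftarrow}_{X}(1-\cdot)$ are deduced from the same equality. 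So as written your proof only covers bounded $X$. The paper spends the entire first half of its proof closing precisely this hole by truncation: $\cE^{g}(X)=\lim_{k}\cE^{g}(X\wedge k)$ by monotone convergence of $\int_{0}^{\infty}g(1-F_{X\wedge k}(x))\,dx$; the bounded-case theorem applied to $X\wedge k$; continuity from above of $AV@R_{\alpha}$ (cited from \cite[Theorem 4.1]{kaina2009convex}) to obtain $AV@R_{\alpha}(-X\wedge k)\nearrow AV@R_{\alpha}(-X)$; and monotone convergence once more in $\alpha$. You must insert this (or an equivalent) limiting argument.

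With that repaired, the rest of your proof is correct and in fact somewhat cleaner than the paper's. Your two-sided scheme --- Tonelli plus the pointwise Rockafellar--Uryasev bound for every admissible $Z$ (with the correct observation that $Z(1)=0$ and $X\geq 0$ turn the bound into an equality at the atom), followed by direct substitution of the explicit measurable minimizer $Z^{*}=\eins_{]0,1[}F^{\leftarrow}_{X}(1-\cdot)$, whose $\mu_{g}$-integrability you extract from finiteness of $\cE^{g}(X)$ --- makes the interchange-of-minimization-and-integration theorem \cite[Theorem 14.60]{rockafellar1998variational} invoked by the paper genuinely unnecessary; the paper exhibits the same quantile minimizer anyway, so nothing is lost by skipping the measurable-selection machinery. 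Two small points: the attainment of the minimum at $F^{\leftarrow}_{X}(1-\alpha)$ should be cited (e.g.\ \cite[Proposition 3.2]{acerbi2002coherence} together with \cite[Lemma A.22]{FoellmerSchied2011}, as the paper does) rather than obtained by ``differentiating and solving $1-F_{X}(z)=\alpha$'', which is not rigorous when $F_{X}$ has jumps or flat parts; and your Tonelli interchanges are indeed legitimate since all integrands involved are nonnegative.
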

\begin{proof}
Let $X\in {\cal X}_{g}$ be nonnegative. By continuity of $g,$ monotone convergence yields
\begin{eqnarray*}
\cE^{g}(X) 
= 
\int_{0}^{\infty}g(1 - F_{X}(x))~dx 
&=& 
\lim_{k\to\infty}\int_{0}^{\infty}g(1 - F_{X\wedge k}(x))~dx\\ 
&=& 
\lim_{k\to\infty}
\cE^{g}(X\wedge k).
\end{eqnarray*}
Then by \cite[Theorem 4.70]{FoellmerSchied2011}, 
$$
\cE^{g}(X\wedge k) = \int AV@R_{\alpha}(-X\wedge k)~\mu_{g}(d\alpha)
$$
holds for every $k\in\N.$ Moreover, $AV@R_{\alpha}$ is known to be continuous from above (e.g. \cite[Theorem 4.1]{kaina2009convex}) for any $\alpha\in ]0,1]$ which means 
$$
\lim_{k\to\infty}AV@R_{\alpha}(-X\wedge k) = AV@R_{\alpha}(-X).
$$
Since $(AV@R_{\alpha}(-X\wedge k))_{k\in\N}$ is a nondecreasing sequence of nonnegative real numbers for every $\alpha \in ]0,1],$ we may conclude from the monotone convergence theorem
\begin{eqnarray}
\label{ersteDarstellung} 
\notag
\cE^{g}(X) 
&=& 
\lim_{k\to\infty} \int AV@R_{\alpha}(-X\wedge k)~\mu_{g}(d\alpha)\\ \notag
&=& 
\int AV@R_{\alpha}(-X)~\mu_{g}(d\alpha)\\ 
&=& 
\int \eins_{]0,1[}(\alpha) AV@R_{\alpha}(-X)~\mu_{g}(d\alpha) + \mu_{g}(\{1\})\ex[X]. 
\end{eqnarray}
For $\alpha\in ]0,1[,$ we also know
$$
AV@R_{\alpha}(-X) 
= 
\min_{x\in\R}\ex\left[\frac{(X + x)^{+}}{\alpha} - x\right] 
= 
\ex\left[\frac{(X + F_{-X}^{\rightarrow}(\alpha))^{+}}{\alpha} - F_{-X}^{\rightarrow}(\alpha)\right], 
$$
where $F_{-X}^{\rightarrow}$ stands for the right-continuous quantile function of the distribution function $F_{-X}$ of $-X$ (cf. \cite[Proposition 3.2]{acerbi2002coherence} along with \cite[Lemma A.22]{FoellmerSchied2011}). By 
$F_{-X}^{\rightarrow}(\alpha) = - F_{X}^{\leftarrow}(1-\alpha),$ we obtain
\begin{eqnarray}
\label{zweiteDarstellung} \notag
&&
AV@R_{\alpha}(-X)\\ 
\quad &=& 
\min_{x\in\R}\ex\left[\frac{(X + x)^{+}}{\alpha} - x\right] 
= 
\ex\left[\frac{(X - F_{X}^{\leftarrow}(1-\alpha))^{+}}{\alpha} + F_{X}^{\leftarrow}(1 -\alpha)\right]. 
\end{eqnarray}
Now define the mapping 
$$
\varphi:~]0,1]\times \R\rightarrow \R,~(\alpha,x)\mapsto \eins_{]0,1[}(\alpha)\ex\left[\frac{(X + x)^{+}}{\alpha} - x\right].
$$
$\varphi(\alpha,\cdot)$ is convex, and therefore continuous for any $\alpha\in ]0,1],$ and 
$\varphi(\cdot,x)$ is $\cB(]0,1])-$measurable for every $x\in\R.$ Then 
$$
\int \min_{x\in\R} \varphi(\alpha,x)~\mu_{g}(d\alpha) = \inf_{Z\in L^{1}(\mu_{g})} \int 
\varphi(\alpha,Z(\alpha))~\mu_{g}(d\alpha)
$$
(see \cite[Theorem 14.60]{rockafellar1998variational} along with \cite[Example 14.29]{rockafellar1998variational}). Moreover, the mapping $\eins_{]0,1[}F^{\leftarrow}(1 - \cdot)$ is nonnegative as well as $\cB(]0,1])-$measurable, and by  \eqref{ersteDarstellung} along with 
\eqref{zweiteDarstellung} it satisfies
\begin{eqnarray*}
\infty 
&>& 
\int \eins_{]0,1[}(\alpha)\ex\left[\frac{(X - F_{X}^{\leftarrow}(1-\alpha))^{+}}{\alpha} + F_{X}^{\leftarrow}(1 -\alpha)\right]~\mu_{g}(d\alpha)\\ 
&\geq& 
\int \eins_{]0,1[}(\alpha)F_{X}^{\leftarrow}(1 -\alpha)~\mu_{g}(d\alpha).
\end{eqnarray*}
Hence $\eins_{]0,1[}F^{\leftarrow}(1 - \cdot)\in L^{1}_{+}(\mu_{g}),$ and by \eqref{zweiteDarstellung} 
\begin{eqnarray*}
\int \min_{x\in\R} \varphi(\alpha,x)~\mu_{g}(d\alpha) 
&=& 
\inf_{Z\in L^{1}(\mu_{g})} \int 
\varphi(\alpha,Z(\alpha))~\mu_{g}(d\alpha)\\
&=& 
\int 
\varphi(\alpha,-F^{\leftarrow}(1 - \alpha))~\mu_{g}(d\alpha)\\ 
&=& 
\inf_{Z\in L^{1}_{+}(\mu_{g})\atop Z(1) = 0} \int 
\varphi(\alpha,-Z(\alpha))~\mu_{g}(d\alpha).
\end{eqnarray*}
Drawing on \eqref{ersteDarstellung} and \eqref{zweiteDarstellung} again, the statement of Lemma \ref{optimizedcertaintyequivalent} follows from Tonelli's theorem.
\end{proof}
\begin{lemma}
\label{Stetigkeitsaussage}
Let the distortion function $g$ be continous and concave. For any $\pr-$essentially bounded mapping $f$ and every nonnegative random variable $X,$ define the mapping
$$
H_{f,X}: L^{1}(\mu_{g})\rightarrow [0,\infty],~Z\mapsto \ex\left[f\cdot\int\left[\frac{(X - |Z(\alpha)|)^{+}}{\alpha} + Z(\alpha)\right]~\mu_{g}(d\alpha)\right].
$$
If $X$ is $\pr-$integrable, and if $g' (0) \doteq \lim_{\alpha\to 0+} g'(\alpha) <\infty,$ then $H_{f,X}$ is real-valued and weakly continuous w.r.t. the $L^{1}-$norm on $L^{1}(\mu_{g}).$
\end{lemma}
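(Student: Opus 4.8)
The plan is to verify the two assertions separately, first finiteness and then continuity for the $L^{1}(\mu_{g})$-norm, the key structural fact being that $g'(0)<\infty$ makes $\nu_{g}$ (the measure from Remark \ref{constructionmeasure} with $\mu_{g}(d\alpha)=\alpha\,\nu_{g}(d\alpha)$) a finite measure with total mass $\nu_{g}(]0,1])=\int_{]0,1]}\tfrac1\alpha\,\mu_{g}(d\alpha)=g'(0)$. Writing $\tfrac{1}{\alpha}\mu_{g}(d\alpha)=\nu_{g}(d\alpha)$ and using $X\ge 0$, I would bound the inner integrand by $(X-|Z(\alpha)|)^{+}\le X$, so that for every $Z\in L^{1}(\mu_{g})$,
\[
\left|\int\Bigl[\tfrac{(X-|Z(\alpha)|)^{+}}{\alpha}+Z(\alpha)\Bigr]\mu_{g}(d\alpha)\right|\le X\,g'(0)+\int|Z|\,d\mu_{g}.
\]
Since $X$ is $\pr$-integrable and $f$ is $\pr$-essentially bounded, applying $\ex[f\cdot\,]$ shows $|H_{f,X}(Z)|\le\|f\|_{\infty}\bigl(g'(0)\,\ex[X]+\|Z\|_{L^{1}(\mu_{g})}\bigr)<\infty$; in particular every piece is absolutely integrable, so Fubini's theorem lets me split
\[
H_{f,X}(Z)=\int\ex\bigl[f\,(X-|Z(\alpha)|)^{+}\bigr]\,\nu_{g}(d\alpha)+\ex[f]\int Z\,d\mu_{g}.
\]

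For continuity, let $Z_{n}\to Z$ in $L^{1}(\mu_{g})$. The second, linear, term is immediate: it is bounded by $\|f\|_{\infty}\|Z_{n}-Z\|_{L^{1}(\mu_{g})}\to0$. The nonlinear term is the delicate one. The naive route, the $1$-Lipschitz estimate $|(X-|a|)^{+}-(X-|b|)^{+}|\le|a-b|$, only bounds the difference by $\|f\|_{\infty}\int\tfrac{|Z_{n}-Z|}{\alpha}\,\mu_{g}(d\alpha)$, i.e. by the $L^{1}(\nu_{g})$-distance, which need not tend to $0$ along an $L^{1}(\mu_{g})$-convergent sequence. Instead I would argue by the subsequence principle together with the \emph{uniform} bound $(X-|z|)^{+}\le X$. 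Given any subsequence, extract a further subsequence $(Z_{n_{k}})$ with $Z_{n_{k}}\to Z$ $\mu_{g}$-a.e.; because $\nu_{g}\ll\mu_{g}$ this also holds $\nu_{g}$-a.e. Then for $\pr$-a.e. $\omega$ the integrands $(X(\omega)-|Z_{n_{k}}(\alpha)|)^{+}$ converge $\nu_{g}$-a.e. to $(X(\omega)-|Z(\alpha)|)^{+}$ and are dominated by the $\nu_{g}$-integrable constant $X(\omega)$, so dominated convergence gives $\int(X-|Z_{n_{k}}|)^{+}\,\nu_{g}(d\alpha)\to\int(X-|Z|)^{+}\,\nu_{g}(d\alpha)$ pointwise in $\omega$; a second application of dominated convergence, now with the $\pr$-integrable majorant $\|f\|_{\infty}X\,g'(0)$, yields convergence of the expectations. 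Since every subsequence has such a further subsequence, the whole sequence converges, proving continuity of the nonlinear term.

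The main obstacle, and the point where care is needed, is precisely this nonlinear term: $z\mapsto(X-|z|)^{+}$ is neither affine nor convex, so neither linearity nor lower-semicontinuity-by-convexity is available, and the Lipschitz bound replaces $\mu_{g}$ by the heavier measure $\tfrac1\alpha\mu_{g}$. The resolution is to discard the Lipschitz estimate in favour of the trivial but decisive domination $(X-|z|)^{+}\le X$, which is $\nu_{g}$-integrable exactly because $g'(0)<\infty$, and to upgrade $L^{1}(\mu_{g})$-convergence to a.e. convergence along subsequences. Combining the two terms then gives $H_{f,X}(Z_{n})\to H_{f,X}(Z)$, completing the proof.
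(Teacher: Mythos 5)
Your finiteness argument is essentially the paper's: both rest on the domination $(X-|Z(\alpha)|)^{+}\leq X$ together with $\int\frac{1}{\alpha}\,\mu_{g}(d\alpha)=g'(0)<\infty$ and the essential boundedness of $f$ (the paper phrases the last step via H\"older's inequality). The continuity part is where you and the paper part ways, and the difference is substantive. The lemma asserts continuity of $H_{f,X}$ for the \emph{weak} topology $\sigma\bigl(L^{1}(\mu_{g}),L^{\infty}(\mu_{g})\bigr)$, and the paper's proof is tailored to exactly that claim: it notes that $B\doteq\{Z\in L^{1}(\mu_{g})\mid \int Z\,d\mu_{g}\in\,]-1,1[\,\}$ is weakly open, that $H_{f,X}$ is bounded above on $B$ (again via $(X-|Z(\alpha)|)^{+}\leq X$), and then invokes the theorem that a real-valued \emph{convex} function bounded above on a nonempty open subset of a topological vector space is continuous for that topology (Aliprantis--Border, Theorem 5.43, applied with the weak topology). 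Your subsequence-plus-dominated-convergence argument is correct in every step (the a.e.\ subsequence extraction, the mutual absolute continuity of $\mu_{g}$ and $\nu_{g}$, the domination by the $\nu_{g}$-integrable constant $X(\omega)$, and the final majorant $\|f\|_{\infty}\,X\,g'(0)$), but what it proves is continuity for the \emph{norm} topology only: you test $H_{f,X}$ exclusively against norm-convergent sequences. Since continuity with respect to the coarser weak topology implies, and is strictly stronger than, norm continuity, your proof does not reach the stated conclusion; relative to the statement as written this is a genuine gap.

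However, this is a gap you could not have closed, because the weak-continuity claim is false and the paper's own argument for it is broken. The convexity the paper invokes fails: owing to the absolute value, $z\mapsto (X-|z|)^{+}$ is a tent-shaped function with a concave kink at $z=0$, so $H_{f,X}$ need not be convex. Concretely, for $\mu_{g}=\delta_{1}$, $f\equiv 1$, $X\equiv 1$ one gets $H_{f,X}(z)=(1-|z|)^{+}+z$ on $L^{1}(\delta_{1})\cong\R$, and $H_{f,X}(0)=1>0=\tfrac{1}{2}\bigl(H_{f,X}(-1)+H_{f,X}(1)\bigr)$; dropping the absolute value would restore convexity but destroy the boundedness on $B$, which is presumably why it was inserted. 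Worse, weak continuity itself genuinely fails in infinite dimensions: take $g(u)=2u-u^{2}$, so that $\mu_{g}(d\alpha)=2\alpha\,d\alpha$ and $g'(0)=2$, let $f\equiv 1$, $X\equiv 1$, and let $Z_{n}=r_{n}$ be the Rademacher functions. Then $Z_{n}\to 0$ weakly in $L^{1}(\mu_{g})$, while
\begin{equation*}
H_{f,X}(Z_{n})=\int_{0}^{1} r_{n}(\alpha)\,2\alpha\,d\alpha\;\longrightarrow\;0\neq 2=H_{f,X}(0),
\end{equation*}
because $(1-|r_{n}(\alpha)|)^{+}\equiv 0$. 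So norm continuity --- exactly what you proved --- is the strongest tenable version of the lemma; your dominated-convergence argument is the correct repair of a defective statement (which, incidentally, is never invoked elsewhere in the paper). What you should add is an explicit acknowledgement that the word ``weakly'' in the statement cannot be honored, rather than silently reinterpreting it as norm continuity.
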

\begin{proof}
By \cite[Lemma 4.69]{FoellmerSchied2011} we have $g'(\beta) = \int\eins_{]\beta,1]}(\alpha)/\alpha~\mu_{g}(d\alpha)$ for every $\beta\in ]0,1[$ so that $\int 1/\alpha~
\mu_{g}(d\alpha) < \infty.$ Furthermore by integrablity of $X,$ we obtain for every $Z\in L^{1}(\mu_{g})$ and any $\tau\in\cT$
$$
\ex\left[\int\frac{(X - |Z(\alpha)|)^{+}}{\alpha}~\mu_{g}(d\alpha)\right]
\leq 
\ex\left[\int\frac{X}{\alpha}~\mu_{g}(d\alpha)\right] 
< \infty.
$$
Then by H\"older's inequality, the 
random variables $f\cdot \int\frac{(X - |Z(\alpha)|)^{+}}{\alpha}~\mu_{g}(d\alpha)$ and $f\cdot \int Z~d\mu_{g}$ are $\pr-$integrable. This implies $H(Z) < \infty.$  
\medskip

Moreover, the set $B \doteq \{Z\in L^{1}(\mu_{g})\mid \int Z~d\mu_{g}\in ]-1,1[\}$ is a weakly open neighbourhood of $0,$ and we may observe for any $Z\in B$
\begin{eqnarray*}
H_{f,X}(Z)
&\leq& 
\ex\left[f\cdot \int\frac{X}{\alpha}~\mu_{g}(d\alpha)\right] + \ex\left[f\right]\cdot\int Z~d\mu\\ 
&\leq& 
\ex\left[f\cdot \int\frac{X}{\alpha}~\mu_{g}(d\alpha)\right] + \ex\left[|f|\right].
\end{eqnarray*}
This means that $H_{f,X}$ is bounded above on the weakly open set $B.$ Therefore, as a real-valued convex mapping, it is weakly continuous (cf. \cite[Theorem 5.43]{aliprantisinfinite}). The proof is complete.
\end{proof}

\section{Appendix}
\label{AppendixA}
Let $(\oOmega,\ocF,(\ocF_{i})_{i\in\{1,\dots,m\}},\oP)$ be a filtered probability space, and let us denote by $L^{\infty}(\oOmega,\ocF_{i},\oP|_{\ocF_{i}})$ the space of 
$\oP|_{\ocF_{i}}-$essentially bounded random variables, whereas $L^{1}(\oOmega,\ocF_{i},\oP|_{\ocF_{i}})$ stands for the space of $\oP|_{\ocF_{i}}-$integrable random variables. 

We endow the product space 
$\Timesim L^{\infty}(\oOmega,\ocF_{i},\oP|_{\ocF_{i}})$ with the product topology 
$\Timesim\sigma(L^{\infty}_{i},L^{1}_{i})$ of the weak$\star$ topologies 
$\sigma(L^{\infty}_{i},L^{1}_{i})$ on $L^{\infty}(\oOmega,\ocF_{i},\oP|_{\ocF_{i}})$ 
(for $i=1,\dots,m$).
\begin{proposition}
\label{angelic}
Let $L^{1}(\oOmega,\ocF_{i},\oP|_{\ocF_{i}})$ be separable w.r.t. the weak topology $\sigma(L^{1}_{i},L^{\infty}_{i})$ for $i\in\{1,\dots,m\},$ and let $\cA\subseteq\Timesim L^{\infty}(\oOmega,\ocF_{i},\oP|_{\ocF_{i}})$ be relatively compact w.r.t. $\Timesim\sigma(L^{\infty}_{i},L^{1}_{i}).$\par 
Then for any $X$ from the 
$\Timesim\sigma(L^{\infty}_{i},L^{1}_{i})-$closure of $\cA,$ we may find a sequence $(X_{n})_{n\in\N}$ in $\cA$ which converges to $X$ w.r.t. the $\Timesim\sigma(L^{\infty}_{i},L^{1}_{i}).$
\end{proposition}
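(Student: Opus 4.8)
The plan is to read Proposition~\ref{angelic} as an \emph{angelicity} statement for the finite product weak$\star$ topology and to reduce it to the metrizability of norm-bounded weak$\star$-compact subsets of the duals $L^{\infty}(\oOmega,\ocF_{i},\oP|_{\ocF_{i}}) = L^{1}(\oOmega,\ocF_{i},\oP|_{\ocF_{i}})^{*}$. First I would put $K\doteq\overline{\cA}$, the closure of $\cA$ with respect to $\Timesim\sigma(L^{\infty}_{i},L^{1}_{i})$; by the relative-compactness hypothesis $K$ is compact, and the given $X$ lies in $K$. The whole statement then follows once $K$ is shown to be metrizable: in a metric space the topological closure of a subset coincides with its sequential closure, so from $\cA\subseteq K$ and $X\in\overline{\cA}$ one obtains a sequence $(X_{n})_{n\in\N}$ in $\cA$ with $X_{n}\to X$, which is exactly the assertion.

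For the metrizability I would first convert the hypothesis into norm separability. Weak separability of the Banach space $L^{1}_{i}$ w.r.t. $\sigma(L^{1}_{i},L^{\infty}_{i})$ already implies norm separability: the norm-closed linear span of a countable weakly dense set is convex and norm-closed, hence weakly closed by Mazur's theorem, hence equal to all of $L^{1}_{i}$, while being norm-separable. Fix a norm-dense sequence $(f_{i,n})_{n\in\N}$ in the closed unit ball of $L^{1}_{i}$. Since $K$ is weak$\star$-compact, each projection $\pi_{i}(K)\subseteq L^{\infty}_{i}$ is weak$\star$-compact, hence weak$\star$-bounded, hence norm-bounded by the uniform boundedness principle; say $\pi_{i}(K)\subseteq c_{i}B_{i}$ with $B_{i}$ the closed unit ball of $L^{\infty}_{i}$ and $c_{i}>0$. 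On the norm-bounded set $c_{i}B_{i}$ the expression
$$
d_{i}(\xi,\eta)\doteq\sum_{n\in\N}2^{-n}\,\Bigl|\int(\xi-\eta)\,f_{i,n}\,d\oP\Bigr|
$$
is a well-defined metric inducing the relative weak$\star$ topology $\sigma(L^{\infty}_{i},L^{1}_{i})$ there (this is the standard fact that weak$\star$-compact bounded subsets of the dual of a separable Banach space are metrizable). Because the product is finite, $d(x,y)\doteq\max_{1\leq i\leq m}d_{i}(x_{i},y_{i})$ metrizes $\Timesim\sigma(L^{\infty}_{i},L^{1}_{i})$ on $\Timesim c_{i}B_{i}$; since $K\subseteq\Timesim\pi_{i}(K)\subseteq\Timesim c_{i}B_{i}$, the compactum $K$ is a subspace of this metrizable set and is therefore itself metrizable.

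Finally I would combine the two steps: $K$ is a compact metric space (metric $d$) containing $\cA$, and $X\in\overline{\cA}=K$, so there is a sequence $(X_{n})_{n\in\N}$ in $\cA$ with $d(X_{n},X)\to0$, i.e.\ $X_{n}\to X$ w.r.t.\ $\Timesim\sigma(L^{\infty}_{i},L^{1}_{i})$. I expect the main obstacle to be the metrizability step rather than the final extraction: it rests on two points that must be handled with care, namely the passage from the stated weak separability to genuine norm separability of each $L^{1}_{i}$ (so that the weak$\star$ topology on bounded sets is metrizable at all), and the norm-boundedness of the coordinate projections $\pi_{i}(K)$, which is where the uniform boundedness principle enters.
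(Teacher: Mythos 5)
Your proof is correct, and all the delicate points are handled properly: the passage from weak separability of \(L^{1}_{i}\) to norm separability via Mazur's theorem, the norm-boundedness of the projections \(\pi_{i}(K)\) via the uniform boundedness principle, and the standard metrizability of norm-bounded sets in \((L^{\infty}_{i},\sigma(L^{\infty}_{i},L^{1}_{i}))\) over a separable predual, followed by the finite-product and sequential-closure arguments. The comparison with the paper is somewhat asymmetric, because the paper does not prove the proposition at all: it simply refers to the proof of Proposition B.1 in \cite{BelomestnyKraetschmer2014}, and the label ``angelic'' indicates that the argument there is framed in the language of angelic spaces (in which relative compactness automatically upgrades closure points to limits of sequences). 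Your route is the elementary, self-contained version of exactly that phenomenon: angelicity of the dual of a separable Banach space in its weak\(\star\) topology is classically established through the metrizability of bounded weak\(\star\)-compacta, which is what you prove by hand. What your argument buys is independence from the external reference and from angelic-space terminology, plus a slightly stronger conclusion (the closure \(K\) is metrizable, not merely sequentially accessible); what the paper's citation buys is brevity and the ability to reuse a result already recorded in the authors' earlier work.
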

\begin{proof}
c.f. proof of Proposition B.1 in \cite{BelomestnyKraetschmer2014}.
\end{proof}

\section{Appendix}
\label{AppendixC}
Let for $m\in\N$ denote by $(\oOmega,\ocF,(\ocF_{i})_{i\in\{1,\dots,m\}},\oP)$ a filtered probability space, and let $L^{\infty}(\oOmega,\ocF_{i},\oP|_{\ocF_{i}})$ denote the space of 
$\oP|_{\ocF_{i}}-$essentially bounded random variables, whereas $L^{1}(\oOmega,\ocF_{i},\oP|_{\ocF_{i}})$ stands for the space of $\oP|_{\ocF_{i}}-$integrable random variables.

Furthermore, let the set $\overline{\cP}_{m}$ gather all 
$(A_{1},\dots,A_{m})$ from $\Timesim\ocF_{i}$ satisfying $\oP(A_{i}\cap A_{j}) = 0$ for 
$i\not= j$ and $\oP(\bigcup_{i=1}^{m} A_{i}) = 1.$ We shall endow respectively the product spaces 
$\Timesikm L^{\infty}(\oOmega,\ocF_{i},\oP|_{\ocF_{i}})$ with the product topologies 
$\Timesikm\sigma(L^{\infty}_{i},L^{1}_{i})$ of the weak$\star$ topologies 
$\sigma(L^{\infty}_{i},L^{1}_{i})$ on $L^{\infty}(\oOmega,\ocF_{i},\oP|_{\ocF_{i}})$ 
(for $k\in\{1,\dots,m\}$ and $i=k,\dots,m$). Fixing $k\in\{1,\dots,m\}$ and 
nonnegative $h\in L^{\infty}(\oOmega,\ocF_{k},\oP|_{\ocF_{k}}),$ the subset 
$\ocPinfty_{mk}(h)\subseteq \Timesikm L^{\infty}(\oOmega,\ocF_{i},\oP|_{\ocF_{i}})$ is defined to consist of all $(f_{k},\dots,f_{m})\in \Timesikm L^{\infty}(\oOmega,\ocF_{i},\oP|_{\ocF_{i}})$ fulfilling $f_{i}\geq 0$ $\oP-$a.s. for any $i\in\{k,\dots,m\}$ and 
$\sum_{i=k}^{m}f_{i} = h$ $\oP-$a.s.. For abbreviation we shall use notation 
$\ocPinfty_{m} \doteq \ocPinfty_{m1}(1).$
\begin{lemma}
\label{BanachAlaoglu}
$\ocPinfty_{mk}(h)$ is a compact subset of $\Timesikm L^{\infty}(\oOmega,\ocF_{i},\oP|_{\ocF_{i}})$ w.r.t. the topology $\Timesikm\sigma(L^{\infty}_{i},L^{1}_{i})$ for 
$k\in\{1,\dots,m\}$ and arbitrary nonnegative $h\in L^{\infty}(\oOmega,\ocF_{k},\oP|_{\ocF_{k}}).$
\end{lemma}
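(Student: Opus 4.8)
The plan is to realize $\ocPinfty_{mk}(h)$ as a closed subset of a compact set and then invoke the fact that closed subsets of compact spaces are compact. First I would note that any $(f_{k},\dots,f_{m})\in\ocPinfty_{mk}(h)$ satisfies $0\le f_{i}\le h$ $\oP$-a.s. for each $i$, because the $f_{i}$ are nonnegative and sum to $h$; in particular $\|f_{i}\|_{\infty}\le\|h\|_{\infty}$. Hence $\ocPinfty_{mk}(h)$ is contained in the product $\Timesikm \overline{B}_{i}$ of the closed balls $\overline{B}_{i}\doteq\{f\in L^{\infty}(\oOmega,\ocF_{i},\oP|_{\ocF_{i}})\mid \|f\|_{\infty}\le\|h\|_{\infty}\}$. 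Each $\overline{B}_{i}$ is $\sigma(L^{\infty}_{i},L^{1}_{i})$-compact by the Banach--Alaoglu theorem (recall $L^{\infty}_{i}=(L^{1}_{i})^{*}$ since $\oP$ is finite), so by Tychonoff's theorem the product $\Timesikm\overline{B}_{i}$ is compact w.r.t. $\Timesikm\sigma(L^{\infty}_{i},L^{1}_{i})$. It therefore suffices to show that $\ocPinfty_{mk}(h)$ is closed in this product topology.

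For the closedness I would split the defining conditions. The nonnegativity constraints are handled coordinatewise: for each $i$ the set $\{f\in L^{\infty}_{i}\mid f\ge 0~\oP\text{-a.s.}\}=\bigcap_{g\in L^{1}_{i},\, g\ge 0}\{f\mid \int fg\,d\oP\ge 0\}$ is an intersection of $\sigma(L^{\infty}_{i},L^{1}_{i})$-closed half-spaces, hence $\sigma(L^{\infty}_{i},L^{1}_{i})$-closed, and its preimage under the continuous coordinate projection is closed in the product.

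For the sum constraint, which is the crux of the argument, I would exploit the measurability of the coordinates. Since $\ocF_{k}\subseteq\ocF_{i}$, both $h$ and each $f_{i}$ are $\ocF_{i}$-measurable, while $\sum_{i}f_{i}$ and $h$ are $\ocF_{m}$-measurable and bounded. Thus $\sum_{i}f_{i}=h$ $\oP$-a.s. is equivalent to $\sum_{i}\int f_{i}g\,d\oP=\int hg\,d\oP$ for every $g\in L^{1}(\oOmega,\ocF_{m},\oP|_{\ocF_{m}})$. The key observation is that, although such a $g$ need not lie in $L^{1}_{i}$, the $\ocF_{i}$-measurability of $f_{i}$ yields $\int f_{i}g\,d\oP=\int f_{i}\,\ex[g\mid\ocF_{i}]\,d\oP$ with $\ex[g\mid\ocF_{i}]\in L^{1}_{i}$ (here $\ex[\,\cdot\mid\ocF_{i}]$ denotes the $\oP$-conditional expectation). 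Consequently, for each fixed $g$, the map $(f_{k},\dots,f_{m})\mapsto\sum_{i}\int f_{i}g\,d\oP-\int hg\,d\oP$ is continuous for $\Timesikm\sigma(L^{\infty}_{i},L^{1}_{i})$, and the sum constraint is the intersection over all $g\in L^{1}_{m}$ of the closed zero-sets of these maps. Intersecting with the nonnegativity sets shows that $\ocPinfty_{mk}(h)$ is closed, and the proof concludes as a closed subset of the compact product.

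I expect the main obstacle to be precisely this continuity of the evaluation against $g\in L^{1}_{m}$: the product topology controls each coordinate only through test functions in its own $L^{1}_{i}$, so at first glance the sum functional is not visibly continuous. The conditioning identity $\int f_{i}g\,d\oP=\int f_{i}\,\ex[g\mid\ocF_{i}]\,d\oP$ is what reconciles the mismatched test spaces, and it is exactly here that the nesting $\ocF_{k}\subseteq\dots\subseteq\ocF_{m}$ of the filtration is indispensable.
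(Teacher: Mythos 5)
Your proof is correct and is essentially the paper's argument in expanded form: the paper simply declares the lemma ``obvious in view of the Banach--Alaoglu theorem,'' and your write-up supplies exactly the details that remark suppresses (containment in a product of weak$\star$-compact balls, Tychonoff, and closedness). Your conditioning identity $\int f_{i}g\,d\oP=\int f_{i}\,\ex[g\mid\ocF_{i}]\,d\oP$ correctly resolves the one genuinely non-trivial point --- continuity of the sum constraint against test functions in $L^{1}_{m}$ --- that the paper leaves implicit.
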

\begin{proof}
\,The statement of Lemma \ref{BanachAlaoglu} is obvious in view of the Banach-Alaoglu theorem.
\end{proof}

\begin{proposition}
\label{Dichtheit}
If $(\oOmega,\ocF_{i},\oP|_{\ocF_{i}})$ is atomless for every $i\in\{1,\dots,m\},$ then $\ocPinfty_{m}$ is the $\Timesim\sigma(L^{\infty}_{i},L^{1}_{i})-$closure of
$$
\{(\eins_{A_{1}},\dots,\eins_{A_{m}})\mid (A_{1},\dots,A_{m})\in\overline{\cP}_{m}\}.
$$
\end{proposition}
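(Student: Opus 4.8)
The plan is to prove the two inclusions separately. The inclusion $\overline{\{(\eins_{A_1},\dots,\eins_{A_m})\mid(A_1,\dots,A_m)\in\overline{\cP}_m\}}\subseteq\ocPinfty_m$ is immediate: every $(\eins_{A_1},\dots,\eins_{A_m})$ with $(A_i)\in\overline{\cP}_m$ lies in $\ocPinfty_m$ (the $\eins_{A_i}$ are nonnegative, $\ocF_i$-measurable, and sum to $1$ $\oP$-a.s.\ by almost-disjointness and covering), and $\ocPinfty_m=\ocPinfty_{m1}(1)$ is $\Timesim\sigma(L^{\infty}_{i},L^{1}_{i})$-compact, hence closed, by Lemma~\ref{BanachAlaoglu}. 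For the reverse inclusion I would fix $(f_1,\dots,f_m)\in\ocPinfty_m$ together with a basic weak-$\star$ neighbourhood, i.e.\ finitely many $g^{(l)}_i\in L^{1}_{i}$ ($i=1,\dots,m$) and $\varepsilon>0$, and produce $(A_1,\dots,A_m)\in\overline{\cP}_m$ with $|\ex[g^{(l)}_i(\eins_{A_i}-f_i)]|<\varepsilon$. Since $\|\eins_{A_i}-f_i\|_\infty\le 1$, I may first replace each $g^{(l)}_i$ by a simple $\ocF_i$-measurable function; collecting these and using that the filtration is increasing, I obtain nested finite algebras $\cG_1\subseteq\cdots\subseteq\cG_m$ with $\cG_i\subseteq\ocF_i$, so that the problem reduces to the \emph{exact} prescription $\oP(A_i\cap C)=\int_C f_i\,d\oP$ for every atom $C$ of $\cG_i$ and every $i$.

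Next I would construct the sets by carving them off sequentially for $i=1,\dots,m$, keeping track of the leftover $L_i\doteq\oOmega\setminus(A_1\cup\cdots\cup A_{i-1})\in\ocF_{i-1}\subseteq\ocF_i$. Writing $D$ for the atoms of the common refinement $\cG_m$ and $p_i(D)\doteq\int_D f_i\,d\oP$, note that $\sum_i p_i(D)=\oP(D)$ because $\sum_i f_i=1$. At step $i$ I would choose $A_i\in\ocF_i$ with $A_i\subseteq L_i$ and the \emph{sharp} masses $\oP(A_i\cap D)=p_i(D)$ for \emph{every} atom $D$ of $\cG_m$; this is stronger than the constraint at the level of $\cG_i$, but it is exactly what keeps room for the later coordinates. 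Because $(\oOmega,\ocF_i,\oP|_{\ocF_i})$ is atomless, Lyapunov's convexity theorem makes the set of attainable mass-vectors $\{(\oP(A\cap D))_D\mid A\in\ocF_i,\ A\subseteq L_i\}$ convex, so the only issue is to exhibit an $\ocF_i$-measurable $h$ with $0\le h\le\eins_{L_i}$ and $\ex[h\,\eins_D]=p_i(D)$ for all $D$, after which Lyapunov replaces $h$ by a genuine indicator. Maintaining the invariant $\oP(L_i\cap D)=\sum_{k\ge i}p_k(D)$ shows that the required room $\oP(L_i\cap D)\ge p_i(D)$ holds at each step. The sets so produced are pairwise disjoint by construction and satisfy $\sum_i\oP(A_i)=\sum_i\int f_i\,d\oP=1$, whence $(A_1,\dots,A_m)\in\overline{\cP}_m$; and they meet the prescribed neighbourhood, giving $(f_1,\dots,f_m)\in\overline{\{\cdots\}}$.

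I expect the genuine difficulty to be precisely the carving step: realizing the prescribed $\cG_m$-atom masses by a \emph{single} $\ocF_i$-measurable set lying in the leftover, in the typical situation where $\cG_m$ is strictly finer than $\ocF_i$, so that one cannot operate ``inside'' an atom $D$ with $\ocF_i$-sets. The point that rescues this is compatibility: the target $p_i(D)=\ex[f_i\,\eins_D]=\ex[f_i\,\ex[\eins_D\mid\ocF_i]]$ is produced by the $\ocF_i$-measurable function $f_i$ itself, so the moment conditions $\ex[h\,\ex[\eins_D\mid\ocF_i]]=\ex[f_i\,\ex[\eins_D\mid\ocF_i]]$ are solvable in $h$ under the room inequalities — a finite-dimensional, transportation/Hall-type feasibility that I would verify by a separation argument on the finitely many functionals $Z\mapsto\ex[Z\,\ex[\eins_D\mid\ocF_i]]$. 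Handling this coupling between the heterogeneous measurability constraints $A_i\in\ocF_i$ and the global almost-partition requirement is the technical crux; everything else (the simple-function reduction, the use of Lyapunov's theorem, and the closedness of $\ocPinfty_m$) is routine.
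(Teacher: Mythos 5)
Your overall architecture is reasonable, and several pieces of it are correct: the easy inclusion via compactness of $\ocPinfty_m$ (Lemma \ref{BanachAlaoglu}), the reduction of a basic product-weak$\star$ neighbourhood to finitely many \emph{exact} constraints on the atoms of nested finite algebras $\cG_1\subseteq\dots\subseteq\cG_m$, the passage from a feasible density $h$ to a genuine set $A_i$ by Lyapunov's theorem, and the observation that the last step $i=m$ is automatic from your invariant. The gap sits exactly where you locate the ``technical crux,'' and it is fatal as stated: the room inequalities $\oP(L_i\cap D)\ge p_i(D)$ do \emph{not} imply solvability of the step-$i$ moment problem. Your separation argument would need $\ex[f_i\psi_\lambda]\le\ex[\eins_{L_i}\psi_\lambda^{+}]$ for \emph{every} signed combination $\psi_\lambda=\sum_j\lambda_j q_j$, $q_j\doteq\ex[\eins_{D_j}\mid\ocF_i]$, whereas the room inequalities only give the directions $\lambda$ with a single nonzero entry (for which $\psi_\lambda^{+}$ is $\psi_\lambda$ or $0$, the $q_j$ being nonnegative). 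Nothing in your construction controls the mixed-sign directions, because the earlier sets may consume precisely the region on which $\ocF_i$ can still distinguish the atoms $D_j$.

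Here is a concrete failure. Let $m=3$, $\oOmega=[0,1)^2$ with Lebesgue measure, $\ocF_1=\ocF_2=\{B\times[0,1)\mid B\in\cB([0,1))\}$, $\ocF_3=\cB([0,1)^2)$ (all atomless). Put $q(x)=\tfrac12\eins_{[0,1/2)}(x)+\eins_{[1/2,3/4)}(x)$, take the single test function $g^{(1)}_3=\eins_{D_1}$ with $D_1=\{(x,y)\mid y<q(x)\}$, $D_2=\oOmega\setminus D_1$, so that $\cG_3$ has atoms $D_1,D_2$ and $\ex[\eins_{D_1}\mid\ocF_2]=q$. Choose $f_1\equiv\tfrac34$, $f_2=\tfrac14\eins_{[1/2,3/4)}(x)$, $f_3=\tfrac14-f_2$; then $(f_1,f_2,f_3)\in\ocPinfty_3$ and $p_1(D_1)=p_1(D_2)=\tfrac38$, $p_2(D_1)=\tfrac{1}{16}$, $p_2(D_2)=0$. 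The set $A_1=\bigl([0,1/4)\cup[1/2,1)\bigr)\times[0,1)\in\ocF_1$ satisfies $\oP(A_1\cap D_1)=\oP(A_1\cap D_2)=\tfrac38$, so it is a perfectly legitimate output of your first Lyapunov step. The leftover is $L_2=[1/4,1/2)\times[0,1)$, and the room inequalities hold with $\oP(L_2\cap D_j)=\tfrac18$; but $q\equiv\tfrac12$ on $L_2$, so \emph{every} $\ocF_2$-measurable $h$ with $0\le h\le\eins_{L_2}$ has $\ex[h\eins_{D_1}]=\tfrac12\ex[h]=\ex[h\eins_{D_2}]$, and the targets $\tfrac{1}{16}\neq 0$ cannot be met, nor even approached within $\tfrac{1}{32}$. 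Equivalently, your separation inequality fails for $\lambda=(1,-1)$: $\ex[f_2(2q-1)]=\tfrac{1}{16}>0=\ex[\eins_{L_2}(2q-1)^{+}]$. A far-sighted choice of $A_1$ does exist in this example --- for instance one that additionally matches $\ex[\eins_{A_1}\,\ex[f_2\eins_{D_1}\mid\ocF_1]]=\ex[f_1\,\ex[f_2\eins_{D_1}\mid\ocF_1]]$, a constraint the bad $A_1$ above violates ($\tfrac{1}{16}\neq\tfrac{3}{64}$) --- so sequential carving is not hopeless, but the quantity you propagate must encode the \emph{later} densities $f_{i+1},\dots,f_m$, not merely the $\cG_m$-atom masses; this stronger bookkeeping is what a correct induction carries along. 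Note finally that the paper itself gives no self-contained argument here: its proof of Proposition \ref{Dichtheit} is a citation of Corollary C.4 in \cite{BelomestnyKraetschmer2014}, which is where this heavier induction is carried out.
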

\begin{proof}
\,cf. Corollary C.4 in \cite{BelomestnyKraetschmer2014}.

\end{proof}
\section{Appendix}
\label{AppendixD}
Throughout the section we shall fix any probability measure $\mu$ on $\cB(]0,1]),$ denoting the usual Borel $\sigma-$algebra on $]0,1].$ Furthmore, $L^{1}_{+}(\mu)$ stands for the space of all nonnegative $\mu-$integrable random variables.
\begin{lemma}
\label{ApproximationStetigkeit}
For every $Z\in L^{1}_{+}(\mu)$ with $Z(1) = 0,$ there exists some 
sequence $(\phi_{n})_{n\in\N}$ of uniformly continuous functions on $]0,1]$ such that 
$\phi_{n}(1) = 0$ for $n\in\N,$ and $\int |\phi_{n} - Z|~d\mu\to 0.$
\end{lemma}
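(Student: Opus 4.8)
The plan is to approximate $Z$ in three reductions---truncate to a bounded function, approximate that by a genuinely continuous function via Lusin's theorem, and finally bend the approximant down to zero at $\alpha=1$ by a continuous cutoff. The only real subtlety is a possible atom of $\mu$ at the point $1$, which I would neutralise at the outset.

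\emph{Peeling off the atom.} Write $c\doteq\mu(\{1\})$ and let $\mu_{0}$ be the restriction of $\mu$ to $]0,1[$, so that $\mu_{0}$ is a finite Borel measure on $]0,1]$ with $\mu_{0}(\{1\})=0$ and $\mu(A)=\mu_{0}(A)+c\,\eins_{A}(1)$ for $A\in\cB(]0,1])$. For any function $\phi$ on $]0,1]$ with $\phi(1)=0$ we then have $\int|\phi-Z|\,d\mu=\int|\phi-Z|\,d\mu_{0}$, because $Z(1)=\phi(1)=0$ makes the atom contribute nothing. Hence it suffices to construct uniformly continuous $\phi_{n}$ on $]0,1]$ with $\phi_{n}(1)=0$ and $\int|\phi_{n}-Z|\,d\mu_{0}\to 0$; I fix $\varepsilon>0$ and produce a single such $\phi$ with $\int|\phi-Z|\,d\mu_{0}<\varepsilon$, the sequence then being obtained by taking $\varepsilon=1/n$.

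\emph{Truncation and Lusin.} Since $Z\in L^{1}_{+}(\mu_{0})$, dominated convergence gives $\int|Z-Z\wedge k|\,d\mu_{0}\to 0$, so I choose $k$ with $\int|Z-Z\wedge k|\,d\mu_{0}<\varepsilon/3$; note $Z\wedge k$ is bounded, nonnegative, and still vanishes at $1$. Regarding $\mu_{0}$ as a finite Borel measure on the compact interval $[0,1]$ (with no mass at $0$) and $Z\wedge k$ as a bounded Borel function there, Lusin's theorem yields $g\in C([0,1])$ with $\|g\|_{\infty}\le k$ and $\mu_{0}(\{g\neq Z\wedge k\})$ as small as desired; choosing it below $\varepsilon/(6(k+1))$ gives $\int|Z\wedge k-g|\,d\mu_{0}\le 2k\,\mu_{0}(\{g\neq Z\wedge k\})<\varepsilon/3$. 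Being continuous on a compact interval, $g$ is uniformly continuous, so its restriction to $]0,1]$ is an admissible approximant except that $g(1)$ need not vanish.

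\emph{Forcing the boundary value.} Define $\chi_{m}\in C([0,1])$ by $\chi_{m}\equiv 1$ on $[0,1-1/m]$ and $\chi_{m}(\alpha)=m(1-\alpha)$ on $]1-1/m,1]$, so $0\le\chi_{m}\le 1$ and $\chi_{m}(1)=0$. Then $\chi_{m}g$ is continuous on $[0,1]$, hence uniformly continuous on $]0,1]$, and $(\chi_{m}g)(1)=0$. Since $|g-\chi_{m}g|=|g|(1-\chi_{m})\le|g|\in L^{1}(\mu_{0})$ and $1-\chi_{m}\to\eins_{\{1\}}$ pointwise on $]0,1]$, dominated convergence under $\mu_{0}$ gives $\int|g-\chi_{m}g|\,d\mu_{0}\to|g(1)|\,\mu_{0}(\{1\})=0$; this is precisely where the absence of an atom of $\mu_{0}$ at $1$ is used. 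Picking $m$ with $\int|g-\chi_{m}g|\,d\mu_{0}<\varepsilon/3$ and setting $\phi\doteq\chi_{m}g$, the triangle inequality yields $\int|\phi-Z|\,d\mu_{0}<\varepsilon$, as required. The step I expect to be the only genuine obstacle is the interaction between the two constraints---uniform continuity and $\phi_{n}(1)=0$---with the atom of $\mu$ at $1$: applying the cutoff directly under $\mu$ would leave the irreducible residual $|g(1)|\,\mu(\{1\})$. Separating off the atom in the first step removes this, after which the argument is the standard $L^{1}$-density chain (truncation, Lusin, dominated convergence).
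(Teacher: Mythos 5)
Your proof is correct, but it takes a genuinely different route from the paper's. The paper extends $\mu$ to a Radon measure $\widehat{\mu}$ on $[0,1]$, uses inner regularity to approximate the extension $\widehat{Z}$ in $L^{1}(\widehat{\mu})$ by a simple function $\sum_{i=1}^{r}\lambda_{i}\eins_{C_{i}}$ over pairwise disjoint compacts $C_{i}\subseteq [0,1[$, then approximates this bounded upper semicontinuous function from above by a decreasing sequence of continuous functions, each multiplied by a Urysohn function that equals $1$ on $\bigcup_{i}C_{i}$ and $0$ at $\alpha=1$; dominated convergence finishes. There the possible atom of $\mu$ at $1$ is handled implicitly: since $Z(1)=0$, the compacts can be taken to avoid $1$, and the Urysohn factor forces the boundary value $\phi(1)=0$ without disturbing the approximation on $\bigcup_{i}C_{i}$. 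You instead neutralise the atom explicitly at the outset by writing $\mu=\mu_{0}+\mu(\{1\})\,\delta_{1}$ and observing that the atom never contributes because $Z$ and every admissible approximant vanish at $1$; after that your argument is the standard chain truncation--Lusin--linear cutoff, with dominated convergence applied under the atom-free measure $\mu_{0}$. Both arguments rest on the same underlying fact (regularity of finite Borel measures on $[0,1]$ --- your Lusin step is exactly where it enters), so neither is more general; yours is more modular in that it quotes a textbook theorem, while the paper's builds the constraint $\phi(1)=0$ directly into the construction. One small remark: the lemma is used in the proof of Theorem \ref{boundedcashflow} to establish density of \emph{nonnegative} uniformly continuous functions, and your Lusin approximant $g$ need not be nonnegative; this is repaired at no cost by replacing $g$ with $g^{+}=\max(g,0)$, which is still continuous, agrees with $Z\wedge k$ wherever $g$ does, and can only decrease the $L^{1}$-error since $Z\geq 0$.
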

\begin{proof}
Let $[0,1]$ be endowed with the usual Borel $\sigma-$algebra $\cB([0,1]),$ and let 
$$
\widehat{\mu}: \cB([0,1])\rightarrow [0,1],~A\mapsto \mu(A\cap ]0,1]).
$$
This mapping is a Radon probability measure on $\cB([0,1]),$ i.e. for any $\varepsilon  > 0$ and for every $A\in\cB([0,1]),$ there is some compact subset $C$ of $[0,1]$ such 
that $C\subseteq A$ and $|\widehat{\mu}(A) - \widehat{\mu}(C)| < \varepsilon.$
\par

Now, let us fix any nonnegative, $\mu-$integrable random variable $Z.$ It may be extended to $[0,1]$ to a nonnegative, $\widehat{\mu}-$integrable random variable $\widehat{Z}$ with $\int Z~d\mu = \int\widehat{Z}~d\widehat{\mu}$ by
$$
\widehat{Z}(\alpha) \doteq 
\bcswitch
Z(\alpha)& \alpha\in ]0,1]\\
0& \alpha = 0
\ecswitch.
$$
Let $\varepsilon > 0$ be arbitrary. Since $\widehat{\mu}$ is a Radon probability measure, and since $\widehat{Z}$ is nonnegative with $Z(1) = 0,$  there exist pairwise disjoint compact subsets $C_{1},\dots,C_{r}$ of $[0,1[$ and $\lambda_{1},\dots,\lambda_{r} > 0$ such that 
$$
\int \left|~\widehat{Z} - \sum_{i=1}^{r}\lambda_{i}\eins_{C_{i}}~\right|~d\widehat{\mu} < \frac{\varepsilon}{2}.
$$
Moreover, $\sum_{i=1}^{r}\lambda_{i}\eins_{C_{i}}$ is a nonnegative, bounded upper semicontinuous mapping on $[0,1].$ Then $\tilde{h}_{n}\searrow \sum_{i=1}^{r}\lambda_{i}\eins_{C_{i}}$ for some sequence $(\tilde{h}_{n})_{n\in\N}$ of continuous mappings on $[0,1].$ Moreover, $C \doteq \bigcup_{i=1}^{r} C_{i}$ is a compact subset of $[0,1[$ so that we may find by Urysohn's lemma some continuous mapping $\varphi:[0,1]\to [0,1]$ with $\varphi(1) = 0$ and $\varphi(\alpha) = 1$ for $\alpha\in C$. Hence $h_{n} \doteq \tilde{h}_{n}\cdot\varphi$ defines an antitone sequence $(h_{n})_{n\in\N}$ of uniformly continuous mappings on $[0,1]$ such that $h_{n}\searrow \sum_{i=1}^{r}\lambda_{i}\eins_{C_{i}},$ and $h_{n}(1) = 0$ for $n\in\N$.

As a continuous mapping on a compact space $h_{1}$ is also bounded, and thus $\widehat{\mu}-$integrable. Then by dominated convergence theorem
$$
\int \left|~h_{n_{0}} - \sum_{i=1}^{r}\lambda_{i}\eins_{C_{i}}~\right|~d\widehat{\mu} < \frac{\varepsilon}{2}\quad\mbox{for some}~n_{0}\in\N.
$$
Choosing $\phi_{\varepsilon} \doteq h_{n_{0}}|_{]0,1[},$ we may conclude
$$
\int |Z - \phi_{\varepsilon}|~d\mu = \int |\widehat{Z} - h_{n_{0}}|~d\widehat{\mu} < \varepsilon.
$$
This completes the proof because $h_{n_{0}}$ is uniformly continuous.
\end{proof}

\begin{lemma}
\label{denseBernstein}
Let $\cZ_{B}$ consist of all mappings $\sum_{i=0}^{n-1}b_{i} B_{i,n}|_{]0,1]}$ with $n\in\N$ and $b_{0},\dots,b_{n-1}\geq 0,$ where $B_{i,n}$ is defined as in \eqref{BernsteinMononom}. Then $\cZ_{B}$ is a dense subset of $\{Z\in L^{1}_{+}(\mu)\mid Z(1) = 0\}$ w.r.t. the $L^{1}-$norm generated by $\mu$. 
\end{lemma}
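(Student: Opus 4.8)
The plan is to combine the approximation by uniformly continuous functions furnished by Lemma \ref{ApproximationStetigkeit} with the classical Bernstein approximation theorem, the decisive point being that the boundary condition $Z(1)=0$ forces the top Bernstein coefficient to vanish, which is exactly why the definition of $\cZ_B$ only involves the indices $i\in\{0,\dots,n-1\}$.

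First I would check that $\cZ_B$ is genuinely contained in the target set. For $i\in\{0,\dots,n-1\}$ one has $n-i\geq 1$, hence $B_{i,n}(1)=\binom{n}{i}\,1^{i}\,0^{\,n-i}=0$; moreover each $B_{i,n}$ is nonnegative and bounded on $]0,1]$, and thus $\mu$-integrable. Consequently every mapping $\sum_{i=0}^{n-1}b_{i}B_{i,n}|_{]0,1]}$ with $b_{0},\dots,b_{n-1}\geq 0$ is a nonnegative element of $L^{1}_{+}(\mu)$ that vanishes at $\alpha=1$, so $\cZ_B\subseteq\{Z\in L^{1}_{+}(\mu)\mid Z(1)=0\}$.

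For the density itself, fix $Z\in L^{1}_{+}(\mu)$ with $Z(1)=0$ and let $\varepsilon>0$. By Lemma \ref{ApproximationStetigkeit} I can pick a nonnegative, uniformly continuous $\phi$ on $]0,1]$ with $\phi(1)=0$ and $\int|Z-\phi|\,d\mu<\varepsilon/2$. Being uniformly continuous, $\phi$ extends to a continuous function on $[0,1]$, which remains nonnegative (as a pointwise limit of nonnegative values) and still satisfies $\phi(1)=0$. I would then apply the Bernstein operator to this continuous extension: the polynomials $(B_{n}\phi)(\alpha)=\sum_{i=0}^{n}\phi(i/n)\,B_{i,n}(\alpha)$ converge uniformly to $\phi$ on $[0,1]$. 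The coefficients $\phi(i/n)$ are all $\geq 0$ since $\phi\geq 0$, and—this is the crucial observation—the top coefficient equals $\phi(n/n)=\phi(1)=0$, so that $B_{n}\phi=\sum_{i=0}^{n-1}\phi(i/n)\,B_{i,n}$ and hence $(B_{n}\phi)|_{]0,1]}\in\cZ_B$ for every $n$.

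Finally I would pass from uniform to $L^{1}(\mu)$ convergence. Since $\mu$ is a probability measure,
\[
\int\bigl|\,B_{n}\phi-\phi\,\bigr|\,d\mu\;\leq\;\sup_{\alpha\in\,]0,1]}\bigl|(B_{n}\phi)(\alpha)-\phi(\alpha)\bigr|\;\xrightarrow[n\to\infty]{}\;0,
\]
so I may fix an $n$ with $\int|B_{n}\phi-\phi|\,d\mu<\varepsilon/2$ and conclude $\int|Z-B_{n}\phi|\,d\mu<\varepsilon$ by the triangle inequality, where $(B_{n}\phi)|_{]0,1]}\in\cZ_B$. There is no serious analytic obstacle here; the only delicate points are the nonnegativity of the Bernstein coefficients (guaranteed by $\phi\geq 0$) and the matching of the index restriction $i\leq n-1$ in the definition of $\cZ_B$ with the constraint $Z(1)=0$, which is precisely what makes $B_{n}\phi$ land in $\cZ_B$ rather than merely in the span of all $B_{i,n}$.
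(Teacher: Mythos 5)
Your proof is correct and follows essentially the same route as the paper: approximate $Z$ in $L^{1}(\mu)$ by a continuous function vanishing at $1$ via Lemma \ref{ApproximationStetigkeit}, then apply the classical Bernstein approximation theorem, observing that the nonnegativity of the coefficients $\phi(i/n)$ together with the vanishing top coefficient $\phi(1)=0$ places the Bernstein polynomial in $\cZ_{B}$, and finish by the triangle inequality using that $\mu$ is a probability measure. The one caveat --- Lemma \ref{ApproximationStetigkeit} as stated does not explicitly assert nonnegativity of the approximants (though its proof constructs nonnegative ones, and one could in any case replace $\phi$ by $\phi^{+}=\max(\phi,0)$, which is still uniformly continuous, vanishes at $1$, and is at least as close to the nonnegative $Z$) --- is shared by the paper's own proof, which makes the same tacit assumption.
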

\begin{proof}
Let $Z\in L^{1}_{+}(\mu)$ with $Z(1) = 0,$ and let $\varepsilon > 0$ be arbitrary. By Lemma \ref{ApproximationStetigkeit}, we may find some real-valued continuous mapping $\phi$ on $[0,1]$ satisfying $\phi(1) = 0,$ and 
$$
\int |\phi|_{]0,1]} - Z|~d\mu < \frac{\varepsilon}{2}.
$$
Moreover, it is well known that 
$$
\sup_{\alpha\in [0,1]}\left|~\phi(\alpha) - \sum_{i=0}^{n_{0}}\phi\left(\frac{i}{n_{0}}\right)B_{i,n}(\alpha)~\right| < \frac{\varepsilon}{2}\quad\mbox{for some}~n_{0}\in\N
$$
(cf. e.g. \cite[proof of Satz B3.1]{witting1995mathematische}). In particular
$$
\int \left|~\sum_{i = 0}^{n_{0}}\phi\left(\frac{i}{n_{0}}\right) B_{i,n}(\alpha) - Z~\right|~d\mu_{g} < \varepsilon.
$$
Finally, 
$\phi\left(\frac{0}{n_{0}}\right),\dots,\phi\left(\frac{n_{0} - 1}{n_{0}}\right)\geq 0$ and $\phi(1) = 0,$ which means that the mapping $\sum_{i=0}^{n}\phi\left(\frac{i}{n_{0}}\right) B_{i,n_{0}}|_{]0,1]}$ belongs to $\cZ_{B}.$ This completes the proof. 
\end{proof}



%
%
%

\section*{Acknowledgments.}


\bibliography{QHDbib-1}{} 

\begin{thebibliography}{10}

\bibitem{acerbi2002coherence}
C.~Acerbi and D.~Tasche.
\newblock On the coherence of expected shortfall.
\newblock {\em Journal of Banking \& Finance}, 26(7):1487--1503, 2002.

\bibitem{aliprantisinfinite}
C.~Aliprantis and K.~Border.
\newblock {\em {I}nfinite {D}imensional {A}nalysis}.
\newblock Springer-Verlag, Berlin, Heidelberg, 2006.

\bibitem{andersen2004primal}
L.~Andersen and M.~Broadie.
\newblock Primal-dual simulation algorithm for pricing multidimensional
  {A}merican options.
\newblock {\em Management Science}, 50(9):1222--1234, 2004.

\bibitem{ankirchner2011skorokhod}
S.~Ankirchner and P.~Strack.
\newblock Skorokhod embeddings in bounded time.
\newblock {\em Stochastics and Dynamics}, 11(02n03):215--226, 2011.

\bibitem{baxter1977compactness}
J.~Baxter and R.~Chacon.
\newblock Compactness of stopping times.
\newblock {\em Probability Theory and Related Fields}, 40(3):169--181, 1977.

\bibitem{BelliniKlarMuellerRosazza-Gianin2013}
F.~Bellini, B.~Klar, A.~M{\"u}ller, and E.~{Rosazza~Gianin}.
\newblock Generalized quantiles as risk measures.
\newblock {\em Insurance: Mathematics and Economics}, 54:41--48, 2014.

\bibitem{BelomestnyKraetschmer2012}
D.~Belomestny and V.~Kr{\"a}tschmer.
\newblock Central limit theorems for law-invariant coherent risk measures.
\newblock {\em J. Appl. Probab.}, 49(1):1--21, 2012.

\bibitem{BelomestnyKraetschmer2014}
D.~Belomestny and V.~Kr\"atschmer.
\newblock Optimal stopping under model uncertainty: randomized stopping times
  approach.
\newblock {\em arXiv preprint arXiv:1405.2240}, 2014.

\bibitem{bielecki2012dynamic}
T.~R. Bielecki, I.~Cialenco, and Z.~Zhang.
\newblock Dynamic coherent acceptability indices and their applications to
  finance.
\newblock {\em Mathematical Finance}, 2012.

\bibitem{cheridito2009risk}
P.~Cheridito and T.~Li.
\newblock Risk measures on {O}rlicz hearts.
\newblock {\em Mathematical Finance}, 19(2):189--214, 2009.

\bibitem{cherny2009new}
A.~Cherny and D.~Madan.
\newblock New measures for performance evaluation.
\newblock {\em Review of Financial Studies}, 22(7):2571--2606, 2009.

\bibitem{Denneberg1994}
D.~Denneberg.
\newblock {\em Non-additive measure and integral}, volume~27 of {\em Theory and
  Decision Library. Series B: Mathematical and Statistical Methods}.
\newblock Kluwer Academic Publishers Group, Dordrecht, 1994.

\bibitem{edgar1982compactness}
G.~Edgar, A.~Millet, and L.~Sucheston.
\newblock On compactness and optimality of stopping times.
\newblock In {\em Martingale Theory in Harmonic Analysis and Banach Spaces},
  pages 36--61. Springer, 1982.

\bibitem{FoellmerSchied2011}
H.~F{\"o}llmer and A.~Schied.
\newblock {\em Stochastic finance: {A}n introduction in discrete time}.
\newblock Walter de Gruyter \& Co., Berlin, extended edition, 2011.

\bibitem{glasserman2004monte}
P.~Glasserman.
\newblock {\em Monte Carlo methods in financial engineering}, volume~53.
\newblock Springer, 2004.

\bibitem{kaina2009convex}
M.~Kaina and L.~R{\"u}schendorf.
\newblock On convex risk measures on \({L}^p\)-spaces.
\newblock {\em Mathematical methods of operations research}, 69(3):475--495,
  2009.

\bibitem{konig1982some}
H.~K{\"o}nig.
\newblock On some basic theorems in convex analysis.
\newblock {\em Modern Applied Mathematics - Optimization and Operations
  Research, North-Holland, Amsterdam}, pages 107--144, 1982.

\bibitem{KraetschmerZaehle2011}
V.~Kr{\"a}tschmer and H.~Z{\"a}hle.
\newblock Sensitivity of risk measures with respect to the normal approximation
  of total claim distributions.
\newblock {\em Insurance Math. Econom.}, 49(3):335--344, 2011.

\bibitem{kupper2009representation}
M.~Kupper and W.~Schachermayer.
\newblock Representation results for law invariant time consistent functions.
\newblock {\em Mathematics and Financial Economics}, 2(3):189--210, 2009.

\bibitem{Kusuoka}
S.~Kusuoka.
\newblock On law invariant coherent risk measures.
\newblock In {\em Advances in mathematical economics, {V}ol.\ 3}, volume~3 of
  {\em Adv. Math. Econ.}, pages 83--95. Springer, Tokyo, 2001.

\bibitem{madan2012structured}
D.~B. Madan and W.~Schoutens.
\newblock Structured products equilibria in conic two price markets.
\newblock {\em Mathematics and Financial Economics}, 6(1):37--57, 2012.

\bibitem{NeweyPowell1987}
W.~K. Newey and J.~L. Powell.
\newblock Asymmetric least squares estimation and testing.
\newblock {\em Econometrica}, 55(4):819--847, 1987.

\bibitem{rockafellar1998variational}
R.~T. Rockafellar, R.~J.-B. Wets, and M.~Wets.
\newblock {\em Variational analysis}, volume 317.
\newblock Springer, 1998.

\bibitem{rogers2002monte}
L.~C. Rogers.
\newblock {M}onte {C}arlo valuation of {A}merican options.
\newblock {\em {M}athematical {F}inance}, 12(3):271--286, 2002.

\bibitem{RuszczynskiShapiro2006}
A.~Ruszczy{\'n}ski and A.~Shapiro.
\newblock Optimization of convex risk functions.
\newblock {\em Math. Oper. Res.}, 31(3):433--452, 2006.

\bibitem{shapiro2013kusuoka}
A.~Shapiro.
\newblock On {K}usuoka representation of law invariant risk measures.
\newblock {\em Mathematics of {O}perations {R}esearch}, 38(1):142--152, 2013.

\bibitem{wang1997axiomatic}
S.~S. Wang, V.~R. Young, and H.~H. Panjer.
\newblock Axiomatic characterization of insurance prices.
\newblock {\em Insurance: Mathematics and economics}, 21(2):173--183, 1997.

\bibitem{witting1995mathematische}
H.~Witting and U.~M{\"u}ller-Funk.
\newblock {\em Mathematische Statistik: asymptotische Statistik: parametrische
  Modelle und nichtparametrische Funktionale}, volume~2.
\newblock Teubner, 1995.

\bibitem{xu2013optimal}
Z.~Q. Xu and X.~Y. Zhou.
\newblock Optimal stopping under probability distortion.
\newblock {\em The Annals of Applied Probability}, 23(1):251--282, 2013.

\end{thebibliography}
\bibliographystyle{abbrv}


\end{document}